\documentclass[12pt]{article}
\RequirePackage[OT1]{fontenc}
\RequirePackage{amsthm,amsmath}
\RequirePackage[numbers]{natbib}
\RequirePackage[colorlinks,citecolor=blue,urlcolor=blue]{hyperref}

\usepackage{eurosym}
\usepackage{amssymb}
\usepackage{amsfonts}
\usepackage{enumerate}
\usepackage[normalem]{ulem}
\usepackage{color}
\usepackage[dvipsnames]{xcolor}
\usepackage[margin=1.2in]{geometry}

\newtheorem{theorem}{Theorem}[section]
\newtheorem{corollary}[theorem]{Corollary}
\newtheorem{lemma}[theorem]{Lemma}
\newtheorem{proposition}[theorem]{Proposition}

\theoremstyle{remark}
\newtheorem{definition}[theorem]{Definition}
\theoremstyle{remark}
\newtheorem{remark}[theorem]{Remark}

\newcommand{\E}{\mathbb{E}}
\newcommand{\N}{\mathbb{N}}
\newcommand{\Z}{\mathbb{Z}}
\newcommand{\R}{\mathbb{R}}
\renewcommand{\P}{\mathbb{P}}

\newcommand{\wt}{\widetilde}
\newcommand{\eps}{\epsilon}

\newcommand{\xin}[1]{{\color{blue} #1}}

\newcommand{\greg}[1]{{\color{purple} #1}}
\newcommand{\rev}[1]{{\color{blue} #1}}
\newcommand{\old}[1]{{}}

\def \F {{\cal F}}

\newcommand{{\pe}}  {\partial_e}
\newcommand {{\lodd}} {{\mathcal J}}
\newcommand{{\inrad}} {{\rm inrad}}

\newcommand {{\cent}} {{w_0}}
\newcommand {{\eb}}  {{\bf e}}
\newcommand{{\niceset}}  {{\mathcal X}}
\newcommand {{\nicesettwo}} {\tilde{\niceset}}
\newcommand {{\sprob}}{\mathcal P}
\newcommand {{\smallbit}}{{\mathcal S}}
\newcommand {{\Prob}}{{\mathbb P}}
\newcommand {{\eset}}  {{\emptyset}}
\newcommand {{\Es}}  {{\rm Es}}
\newcommand {{\Cp}} {{\rm cap}}
\newcommand {{\p}}  {{\partial}}
\newcommand {{\hm}}  {{\rm hm}}

\newcommand {{\LE}}  {{\mathbf{LE}}}

\newcommand {{\saws}}  {{\mathcal K}}

\begin{document}

\author{%
\begin{tabular}{c}
Gregory F. Lawler\\
[-4pt] 
{\small University of Chicago}%
\end{tabular}
\and 
\begin{tabular}{c}
Xin Sun \\[-4pt] 
{\small Columbia University}%
\end{tabular}
\and 
\begin{tabular}{c}
Wei Wu
\\[-4pt] 
{\small University of Warwick}%
\end{tabular}%
}
\title{Four dimensional loop-erased random walk}
\date{}
\maketitle

\begin{abstract}
	The loop-erased
	random walk (LERW) in $\mathbb{Z}^4$ is the process
	obtained by erasing loops chronologically for simple random walk.  We  prove that the escape probability of the LERW renormalized by $(\log n)^{\frac{1}{3}}$ converges almost
	surely and in $L^{p}$ for all $p>0$.
	Along the way, we extend  previous results by the first author building on slowly recurrent sets. We provide two applications for the escape probability. We construct the
	two-sided LERW, and  we construct a $\pm 1$ spin model coupled with the wired spanning
	forests on $\Z^4$
	with the bi-Laplacian Gaussian field on $\mathbb{R}^{4}$ as its scaling limit.
\end{abstract}

\section{Introduction}\label{secintro}
Loop-erased random walk (LERW) is a probability measure on self-avoiding paths introduced by the first author of this paper in \cite{Lawler-Duke}. Since then LERW has become an important model in  statistical physics and probability, with close connections to other important subjects such as uniform spanning tree and Schramm-Loewner evolution. 
A key quantity that governs the large scale behavior of LERW is the so-called escape probability, namely, the non-intersection probability of a LERW and an independent simple random walk (SRW) starting at the same point. It is known that $d=4$ is critical for LERW, in the sense that a  LERW and an SRW on $\mathbb{Z}^d$ intersect a.s. if and only if $d \leq 4$. It iwas shown in
\cite{L4Duke} that LERW on $\Z^4$ has Brownian motion as its scaling limit after proper normalization.  The exact normalization was conjectured but not proved in that paper; in  \cite{3rdmoment} 
it was determind  up to multiplicative constants. The 
argument uses a weak version of a ``mean-field''
property for LERW in $\mathbb{Z}^4$.
In this paper, we establish the sharp mean-field property for the escape probability of LERW on $\mathbb{Z}^4$ that goes beyond the scaling limit result. 

We state our main results for the renormalized escape probability of 4D LERW, Theorem \ref{thm:inter} and \ref{thm:const},  in Section \ref{subsec:nis}. An outline of the proofs is given in Section \ref{subsec:outline}. Then in Section \ref{subsec:two-sided} and \ref{subsec:field} we discuss two applications of the main results, namely a construction of the two-sided LERW in $d=4$, and a spin field  coupled with the wired spanning
forests on $\Z^4$ with the 
bi-Laplacian Gaussian field on $\mathbb{R}^{4}$ as its scaling limit.

\subsection{Escape probability of LERW}\label{subsec:nis}
Given a positive integer $d$,  a process $S=\{S_n\}_{n\in \N}$ on $\Z^d$ is called  a \emph{simple random walk} (SRW) on $\Z^d$ if $\{S_{n+1}-S_n \}_{n\in\N}$ are i.i.d. random variables taking uniform distribution on $\{z\in \Z^d: |z|=1\}$. Here $|\cdot |$ is the Euclidean norm on $\R^d$. Unless otherwise stated, our  SRW starts at the origin, namely, $S_0=0$. When $S_0=x$ almost surely, we denote the probability measure of $S$ by $\P^x$. 

A \emph{path} on $\Z^d$ is a sequence of vertices such that any two consecutive vertices are neighbors in $\Z^d$.  Given a sample $S$ of SRW and $m<n\in \N$, let $S[m,n]$ and  $S[n,\infty)$  be the paths $[S_m,S_{m+1}\cdots, S_n]$ and $[S_n,S_{n+1},\cdots]$ respectively. 
Given a finite path $\mathcal{P}=[v_0,v_1, \dots,v_n]$ on $\Z^d$,
the \emph{(forward) loop erasure} of $\mathcal{P}$ (denoted by $\LE(\mathcal{P})$) is defined
by erasing cycles in $\mathcal{P} $ chronologically. More precisely,  we define $\LE(\mathcal{P})$ inductively as follows. The first vertex $u_0$ of $\LE(\mathcal{P})$ is
the vertex $v_0$ of $\mathcal{P}$. Supposing that $u_j$ has been set, let $k$ be the last index such that $v_k=u_j$. Set $u_{j+1}=v_{k+1}$ if $k<n$; otherwise, let
$\LE(\mathcal{P}) :=[u_0,\dots, u_j]$.  Suppose $S$ is an  SRW on $\Z^d$ ($d\ge 3$).  Since $S$ is transient, there is no trouble defining $\LE(S)=\LE(S[0,\infty))$, which we call the \emph{loop-erased random walk} (LERW) on $\Z^d$. LERW on $\Z^2$ can be defined via a limiting procedure but we will not discuss it in this paper.

Let $W$ and $S$ be two
independent simple random walks on $\mathbb{Z}^4$ starting at the origin
and $\eta=\LE(S)$.  Let 
\begin{equation*}
	X_n=(\log n)^{\frac13}\P \{W[1,n^2]\cap \eta=\emptyset|\eta\}.
\end{equation*}%
In \cite{3rdmoment}, building on the work on slowly recurrent sets 
\cite{LSlowly}, the first author of this paper proved that $\mathbb{E}[X_{n}^{p}]\asymp 1$
for all $p>0$. In this paper, we show that

\begin{theorem}
	\label{thm:inter} 
	There exists 	a nontrivial random variable $X_\infty$ such that 
	\[
	\lim_{n\to\infty} X_n=X_\infty\qquad \textrm{almost surely and in $L^p$ for all $p>0$}.
	\]
\end{theorem}

We can view $X_{\infty }$ as the renormalized escape probability of 4D
LERW at its starting point. It is the key for our construction of the 4D two-sided LERW in Section~\ref{subsec:two-sided}.
Our next theorem is similar to Theorem~\ref{thm:inter} with the additional feature of the evaluation of the limiting  constant. 
\begin{theorem}
	\label{thm:const} Let $W,W',W'',S$ be four
	independent simple random walks on $\mathbb{Z}^4$ starting from the origin
	and $\eta=\LE(S)$.   Then
	\begin{align*}  \label{eq:exact}
		&\lim_{n\to\infty}(\log n)\P \{(W[1,n^2]\cup W^{\prime }[1,n^2])\cap
		\eta=\emptyset, W^{\prime \prime }[0,n^2]\cap\eta=\{0\} \}  
		= \frac{\pi^2}{24}.
	\end{align*}
\end{theorem}
Write $\frac{\pi^2}{24}$ in Theorem~\ref{thm:const} as $\frac{1}{3}\cdot \frac{\pi^2}{8}$.
We will see that  the constant $\frac{1}{3}$ is universal  and  is the reciprocal of the number of SRW's other than $S$. 
The factor $\pi ^{2}/8$ comes from the bi-harmonic Green function of $\mathbb{Z}^{4}$ evaluated at $
(0,0)$ and is  lattice-dependent. 
{The SRW analog of Theorem~\ref{thm:const} is proved in \cite[Corollary 4.2.5]{lawler-intersection}:
	\begin{align*}
		\lim_{n\rightarrow \infty }(\log n)\P \{W[1,n^2]\cap S[0,n^2]=
		\emptyset,W^{\prime }[0,n^2]\cap S[1,n^2]=\emptyset \}=\frac{1}{2}\cdot \frac{\pi ^{2}}{8}.
	\end{align*}}
	
	Theorem~\ref{thm:inter} and ~\ref{thm:const} are  special case of our Theorem~\ref{maintheoremjune}, whose proof is outlined in Section~\ref{subsec:outline}.
	In particular, the asymptotic result  is obtained from a refined analysis  of slowly recurrent set beyond  \cite{LSlowly,3rdmoment} 
	as well as fine estimates on the harmonic measure of 4D LERW.
	The explicit constant $\frac{\pi^2}{24}$ is  obtained from a ``first passage''
	path decomposition of the intersection of an SRW and a LERW. Here care is needed because there are
	several time scales involved.  See Section~\ref{subsec:outline} for an outline.
	As a byproduct, at the end of Section~\ref{sec:long} we  obtain an asymptotic result on the long range intersection between SRW and LERW  which is of independent interest.
	
	{To state the result, we recall the Green function on $\Z^4$ defined by
		\[
		G(x,y) = \sum_{n=0}^{\infty}\P^x[S_n=y],
		\] Given a subset  $A\subset\Z^4$, the Green function on $A$ is defined by
		\[
		G_A(x,y) = \sum_{n=0}^{\infty}\P^x[S_n=y,S[0,n]\subset A].
		\]  It will be technically easier to work on geometric scales. Let  $C_n=\{z\in \Z^4: |z|< e^n \}$ be the discrete
		disk,   $G_n = G_{C_n}$, and 
		\begin{equation*}
			G_n^2(w) = \sum_{z \in C_n} G_n(0,z) \, G_n(z,w).
		\end{equation*}
		\begin{theorem}\label{thm:long-range}
			Let $W,S$ be independent simple random walks on $\Z^4$ with $W_0=0$ and $S_0=w$. Let \(	\sigma^W_n = \min\{j: W_j \not\in C_n\}\) and \(	\sigma_n = \min\{j: S_j \not\in C_n\}.\) If
			\[     q_n (w)= \Prob\{W[0, \sigma^W_n] \cap \LE(S[0,\sigma_n]) \neq \eset\} ,\]
			then 
			\begin{equation*}
				\lim_{n \rightarrow \infty}\; \max_{n^{-1} \leq e^{-n} \, |w| \leq 1 -
					n^{-1} } \left| n\,q_n(w) -
				\frac{\pi^2}{24} \, G^2_n(w) \right| = 0 .
			\end{equation*}
		\end{theorem}

		\begin{remark}
			Theorem~\ref{thm:long-range} holds if 	$W[0, \sigma^W_n] \cap \LE(S[0,\sigma_n])$ is replaced by $W[0, \sigma^W_n] \cap S[0,\sigma_n]$ and $\pi^2/24$ is replaced by $\pi^2/16$. This is the long range estimate for two independent SRW's in \cite[Section 4.3]{lawler-intersection}. The function $G_n^2(w)$ is the expected number of intersections of $S[0,\sigma_n]$ and $W[0,\sigma^W_n]$. This means that the long-range non-intersection probability of  an SRW and an independent LERW is comparable with that of two independent SRW's. This is closely related to  the fact that the scaling limit of LERW  on $\Z^4$ is Brownian motion, that
			is, has Gaussian limits.
		\end{remark}
	}
	
	\subsection{Outline of the proof}\label{subsec:outline}
	
	Here we state and give an outline for the proof
	of  Theorem~\ref{maintheoremjune} from
	which  Theorem~\ref{thm:inter} and~\ref{thm:const} 
	are immediate corollaries.  The details
	for the proof are given  in Section~\ref{sec:pre}-\ref{constantsec}.
	
	We start by defining some notation. 
	Let \(	\sigma_n = \min\left\{j \geq 0: S_j \not\in C_n \right\}\) and 
	\begin{equation}\label{eq:def-Fn}
		\textrm{$\F_n$ be the $\sigma$-algebra generated by $\{S_j: j \leq\sigma_n\}$.}
	\end{equation}
	We recall that there exist $0 <\beta,c <\infty$ such that for all
	$n$, if $z \in C_{n-1}$ and $a \geq 1$,
	\begin{equation}\label{jul18.1}
		\Prob^z\left\{ a^{-1} \, e^{2n}
		\leq \sigma_n \leq a \, e^{2n} \right\} \geq 1 - c
		\, e^{-\beta a}.
	\end{equation}
	For the lower inequality, see, e.g.. \cite[(12.12)]{lawler-limic},
	and the upper inequality follows from the
	fact that
	$ \Prob^z\{\sigma_n  \leq (k+1) e^{2n} 
	\mid \sigma_n \geq k \, e^{2n}\}  $ is uniformly
	bounded away from $0$.
	
	If $x \in \Z^4,V \subset \mathbb{Z}^4$, we write 
	\begin{equation*}
		H(x,V) = \Prob^x\{S[0,\infty) \cap V \neq \eset \}, \] 
		\[ H(V) = H(0,V),\;\;\;\;\Es(V) = 1 - H(V) ,
		\end{equation*}
		\[
		\overline H(x,V) = \Prob^x\{S[1,\infty) \cap V \neq \eset \}, \;\;\;\;
		\overline{\Es}(V) = 1 - \overline H(0,V).
		\]
		Note that $\overline {\Es}(V) = \Es(V)$, if $0 \not \in V$. If $0 \in V$,    a
		standard last-exit decomposition shows that 
		\begin{equation}\label{eq:last-exit}
			\Es(V^0) = G_{\mathbb{Z}^4\setminus V^0}(0,0) \,\overline{\Es}(V),
		\end{equation}
		where $V^0 = V \setminus \{0\}$ and $G_{\mathbb{Z}^4\setminus V^0}$ is the
		Green's function on  $\mathbb{Z}^4\setminus V^0$.
		We also write
		\[   \Es(V;n) = \Prob\{S{[1,\sigma_n]} \cap V
		= \eset \}, \]
		which is clearly decreasing in $n$.
		
		We have to be a little careful about the definition of the loop-erasures of
		the random walk and loop-erasures of subpaths of the walk. We will use the
		following notations.
		
		\begin{itemize}
			\item $\eta$ denotes the (forward) loop-erasure of $S[0,\infty)$
			and 
			\begin{equation*}
				\Gamma =\eta[1,\infty) = \eta[0,\infty) \setminus\{0\}.
			\end{equation*}
			
			\item $\omega_n$ denotes the finite random walk path $S[\sigma_{n-1},
			\sigma_n]$
			
			\item $\eta^n = \LE(\omega_n)$ denotes the loop-erasure of $%
			S[\sigma_{n-1},\sigma_n]$.
			
			\item $\Gamma_n$ = $\LE(S[0,\sigma_n]) \setminus \{0\}$, that is, $\Gamma_n$
			is the loop-erasure of $S[0,\sigma_n]$ with the origin removed.
		\end{itemize}
		
		Note that $S[1,\infty)$ is the concatenation of the paths $%
		\omega_1,\omega_2,\ldots$. However, it is not true that $\Gamma $ is the
		concatenation of $\eta^1, \eta^2,\ldots$, and that is one of the technical
		issues that must be addressed in the proof.
		
		Let $Y_{n},Z_{n}, G_n$ be the $\F_{n}$-measurable random variables 
		\begin{equation}\label{eq:def-Zn}
			Y_{n}=H(\eta ^{n}),\;\;\;\;\;Z_{n}=\Es[\Gamma_n],\;\;\;\;G_{n}=G_{\mathbb{Z}%
				^{4}\setminus \Gamma _{n}}(0,0).
		\end{equation}%
		By \eqref{eq:last-exit}, we have 
		\(
		\overline{\Es}(\Gamma _{n}\cup \{0\})=G_{n}^{-1}\,Z_{n}.
		\)
		It is easy to see that $1\leq G_{n}\leq 8$. Furthermore,  using the transience of $S$, we can see that
		with probability one \(	G_{\infty }:=\lim_{n\rightarrow \infty }G_{n}\) exists and equals \(G_{\mathbb{Z}^{4}\setminus
			\Gamma }(0,0)\).
		\begin{theorem}
			\label{maintheoremjune} For every $0 \leq r,s < \infty$, there exists $%
			0<c_{r,s}<\infty, $ such that 
			\begin{equation*}
				\lim_{n\to\infty}n^{r/3}\mathbb{E}\left[Z_n^r \, G_n^{-s}\right] =c_{r,s}.
			\end{equation*}
			Moreover, $c_{3,2} = \pi^2/24$.
		\end{theorem}
		Our methods do not compute the constant $c_{r,s}$ except in the  case $%
		r=3,s=2$ (and the trivial case $r=s=0$). 
		
		The proof of Theorem~\ref{maintheoremjune}, which is the technical bulk of this paper,  requires several steps which we will
		outline now. For the remainder of this paper we fix $r>0$ and allow
		constants to depend on $r$. If $n \in \N$, we let
		\begin{equation}\label{def-pn}
			p_n = \mathbb{E}\left[Z_n^r \right], \;\;\;\;\;
			\hat p_n = \mathbb{E}\left[
			Z_n^3 \, G_n^{-2}\right],
		\end{equation} 
		\begin{equation}\label{eq:def-phi}
			h_n=\E[Y_n]=\E[H(\eta^n)], \;\;\;\;\;
			\phi_n = \prod_{j=1}^n  e^{- h_j}.
		\end{equation}
		
		In Section~\ref{prelsec}, we review and prove some basic estimates on simple random walk that, in particular,  give $h_n= O(n^{-1})$, and hence,
		\begin{equation*}
			\phi_{n} = \phi_{n-1} \, e^{-h_n} = \phi_{n-1} \, %
			\left[1 + O\left(n^{-1} \right) \right].
		\end{equation*}
		In  Section \ref{nicesec}, we revisit the theory of slowly recurrent sets in \cite{LSlowly,3rdmoment} and obtain quantitive estimates on the escape probablity of slowly recurrent sets under a mild assumption (see Definition~\ref{def:slow}). Using these estimates,  we prove two propositions in Section~\ref{subseqsec}. 
		The first one controls  $p_n/p_{n+1}$:
		\begin{proposition}	\label{nov22.cor5}
			\(p_{n+1} = p_{n} \, \left[1 -O\left({\log^4n/n} \right) \right].\)
		\end{proposition}
		The second one gives a good estimate along the subsequence $\{n^4\}$.
		Let $\tilde\eta_n$ denote the (forward) loop-erasure of $%
		S[\sigma_{(n-1)^4 + (n-1)}, \sigma_{n^4 - n}]$. For $m<n$ we let 
		$A(m,n) $ be the discrete  annuli defined by
		\begin{equation*}
			A(m,n) = C_n \setminus
			C_{m} = \{z \in C_n: |z| \geq e^{m}\}.
		\end{equation*}
		Let \(		\tilde \Gamma_n = \tilde \eta_n \cap A( (n-1)^4+ 4(n-1), n^4 - 4n) \) and \( \tilde h_n = \mathbb{E}\left[H(\tilde \Gamma_n) \right].\)
		
		\begin{proposition}\label{subsequenceprop}  There exists $c_0<\infty$ such that
			\begin{equation*}
				p_{n^4} = \left[c_0 + O\left(n^{-1} \right) \right]\, \exp \left\{-r
				\sum_{j=1}^n \tilde h_j \right\}.
			\end{equation*}
		\end{proposition}
		In Section~\ref{sec:full}, in order to get rid of the subsequence $\{n^4\}$, we prove
		\begin{proposition}
			\label{nov22.prop3} There exists $c< \infty, u > 0$ such that 
			\begin{equation*}
				\Big| \tilde h_n- \sum_{(n-1)^4 < j\le n^4} h_j  \Big| \leq \frac{c}{n^{1+u}}.
			\end{equation*}
		\end{proposition}
		Proposition~\ref{nov22.prop3} intuitively says that 
		if the random walk hits $\tilde \Gamma_n$ then
		it does so by  hitting exactly one of $\eta^j$'s.
		This proposition, which is key for proving our main
		result, does not follow from the work in
		\cite{3rdmoment}.  Some of the earlier propositions
		have been improved here in order to be able to
		establish this.
		To rigorously prove this, we need a frequency  estimate on cut points of SRW  and a large deviation estimate on the harmonic measure of the range of SRW obtained in Section~\ref{loopfreesec} and~\ref{sec:deviation} respectively.  Proposition~\ref{nov22.cor5}---\ref{nov22.prop3}   and  readily yield
		the folloing.
		\begin{proposition}
			\label{prop:rs} \label{greenprop} For every $r,s$ there exists constant $c^{\prime
			}_{r,s},u>0$ such that 
			\begin{equation*}
				\mathbb{E}\left[Z_n^r \, G_n^{-s}\right] = c_{r,s}^{\prime }\, \phi_n^r %
				\left[1 + O(n^{-u}) \right].
			\end{equation*}
			In particular, there exists a constant $c'_{3,2}>0$ such that 
			\begin{equation}  \label{nov30.1}
				\hat p_n = c_{3,2}^{\prime }\, \left[1 + O(n^{-u}) \right] \,\exp
				\left\{-3\sum_{j=1}^n h_j \right\},
			\end{equation}
		\end{proposition}
		In  Section~\ref{constantsec},  we use a path decomposition  to study the long-range intersection of an SRW and a LERW and show in Proposition~\ref{prop:3.26}  that 
		there exists $u > 0$ such that 
		\begin{equation}\label{eq:hn}
			h_n = \frac 8 {\pi^2}\, \hat p_n + O(n^{-1-u}).
		\end{equation}
		Combined with \eqref{nov30.1}, this gives  that 
		the limit 
		\begin{equation}\label{eq:exist}
			\lim_{n \rightarrow \infty} \left[\log \hat p_n + \frac{24}{\pi^2} 
			\sum_{j=1}^n \hat p_j \right] 
		\end{equation}
		exists and is finite.
		Note that $\lim_{n\to\infty}\hat p_{n+1}/\hat p_n= 1$ (see Proposition~\ref{prop:rs}). In Section~\ref{exactsec} we prove an elementary lemma  (see Lemma \ref{nov21.lemma2}) on  sequences asserting that this combined with \eqref{eq:exist} assures that $\lim_{n\to\infty} n\hat p_n= \pi^2/24$. 
		Now \eqref{nov30.1} and~\eqref{eq:hn} imply that $\lim_{n\to\infty}3nh_n=1$ and
		\begin{equation}  \label{jun24.1}
			\phi_n^3 = \exp \left\{-3\sum_{j=1}^n h_j \right\} \sim \frac{c}{n} \quad  \text{for some constant}\; c>0.
		\end{equation}
		This combined with Proposition~\ref{prop:rs} concludes the proof of Theorem~\ref{maintheoremjune}.  This already implies Theorem~\ref{thm:const} by changing scales. 
		The proof of Theorem~\ref{thm:inter} will be explained in Section~\ref{sec:two-sided}.
		
		\subsection{Two sided LERW}\label{subsec:two-sided}
		In \cite{CCLERW}, the first author author proved the existence of \emph{two-sided loop-erased random walk} in $\Z^d$ for $d\ge 5$.
		\begin{theorem}[\cite{CCLERW}]
			Given $d\ge 5$, consider the sample of  LERW in $\Z^d$, denoted by $\{\eta_i\}_{i\ge 0}$. The $n\to\infty$ limit of
			$\{\eta_{n+i}-\eta_n \}_{-k\le i\le k}$ exists for any $k\in \N$, which defines an ergodic random path $\{\tilde{\eta}_i\}_{i\in \Z}$ in $\Z^d$  called the two-sided LERW.
			\label{thm:two-sided}
		\end{theorem}
		The proof of Theorem~\ref{thm:two-sided} crucially replies on the existence of global cut points for SRW  in $\Z^d$ for $d\ge 5$, which is not true for $d\le 4$.
		As an  application of results in Section~\ref{subsec:nis}, we extend the existence of the two-sided LERW to $d=4$
		in Section~\ref{sec:two-sided}. Moreover, $X_\infty$ in Theorem~\ref%
		{thm:inter} is the Radon-Nikodym derivative between the two-sided LERW
		restricted to non-negative times and the usual LERW.  The existence for $d=2,3$ was recently established by the first author author in \cite{lawler-twoside}.
		A big difference in $d<4$ compared to $d\ge 4$ case is that the marginal
		distribution of one side of the path is \emph{not} absolutely continuous
		with respect to the usual LERW. 
		
		Our results addresses the $d=4$ case of Conjecture 15.12 in \protect\cite{BLPS} by  Benjamini-Lyons-Peres-Schramm, which asserts the existence of the two-sided uniform spanning tree in $\Z^d$. This is immediate from Wilson's algorithm~\cite{Wil96} that connects LERW and uniform spanning tree (see Section~\ref{sec:USF}). 
		\subsection{A spin field from USF} \label{subsec:field}
		As an application of Theorem~\ref{thm:long-range}, we will construct a sequence
		of random fields on the integer lattice $\mathbb{Z}^{d}$ ($d\geq 4$) using
		uniform spanning tree  and show that they converge in distribution to the bi-Laplacian field
		(Theorem \ref{BGF}).

		For each positive integer $n$, let $N=N_{n}=n(\log
		n)^{1/4}$. Let $A_N=\{x\in \mathbb{Z}^{d}:|x| <N\}$. We will construct
		a $\pm 1$ valued random field on $A_N$ as follows. Recall that a \emph{%
			wired spanning tree} on $A_N$ is a tree on the graph $A_N\cup \{\partial
		A_N\}$ where we have viewed the boundary $\partial A_N$ as
		\textquotedblleft wired\textquotedblright\ to a single point. Such a tree
		produces a \emph{spanning forest} on $A_N$ by removing the edges connected
		to $\partial A_N$. We define the uniform spanning forest (USF) on $A_N$
		to be the forest obtained by choosing the wired spanning tree of $A_N\cup
		\{\partial A_N\}$ from the uniform distribution. (Note this is not the
		same thing as choosing a spanning forest uniformly among all spanning
		forests of $A_N$.) We now define the random field on (a rescaling
		of) $\mathbb{Z}^{d}$. Let $a_{n}$ be a
		sequence of positive numbers (we will be more precise later).
		
		\begin{itemize}
			\item Choose a USF on $A_N$. This partitions $A_N$ into (connected)
			components.
			
			\item For each component of the forest, flip a fair coin and assign each
			vertex in the component value $1$ or $-1$ based on the outcome. This gives a
			field of spins $\{Y_{x,n}:x\in A_N\}$. If we wish we can extend this to a
			field on $x\in \mathbb{Z}^{d}$ by setting $Y_{x,n}=0$ for $x\not\in A_N$.
			
			\item Let $\phi_n(x) = a_n \, Y_{nx,n}$ which is a field defined on $L_n
			:=n^{-1} \, \mathbb{Z}^d$.
		\end{itemize}
		
		This random function is constructed in a manner similar to the Edward-Sokal
		coupling of the FK-Ising model \cite{Gri}. That coupling says that we can
		obtain the Ising model on $\mathbb{Z}^{d}$ by first sample a random
		configuration $\omega \in \left\{ 0,1\right\} ^{\mathbb{Z}^{d}}$ according
		to the so-called random cluster measure, and then flip a fair coin and
		assign each component of $\omega $ value $1$ or $-1$ based on the outcome.
		The way we construct $\phi _{n}$ is similar to the Ising model except that
		we replace the random cluster measure by the USF measure on $\mathbb{Z}^{d}$.
		
		It is known that the Ising model has critical dimension $d=4$, in the sense
		that mean field critical behaviors are expected for $d\geq 4$ but not for $%
		d\leq 3$. In particular, it is believed when $d\geq 4$ the scaling limit of
		Ising model is a $d-$dimensional Gaussian Free Field (GFF). For $d\geq 5$ this GFF limit is proved by Aizenman \cite{Aiz}, while
		the critical case $d=4$ is still open. 
		Our theorem  below  asserts  that the random field $%
		\phi _{n}$ we construct has critical dimension $d=4$, and for $d\geq 4$, we
		can choose the scaling $a_{n}$ such that $\phi _{n}$ converges to the
		bi-Laplacian Gaussian field on $\mathbb{R}^{d}$. Note that when $d=4$, a
		bi-Laplacian Gaussian field is log-correlated.
		
		If $h\in C_{0}^{\infty }(\mathbb{R}^{d})$, we write 
		\begin{equation*}
			\langle h,\phi _{n}\rangle =n^{-d/2}\sum_{x\in L_{n}}h(x)\,\phi _{n}(x).
		\end{equation*}
		
		\begin{theorem}
			$\;\label{BGF}$
			
			\begin{itemize}
				\item If $d \geq 5$, there exists $a>0$ such that if $a_n = a \, n^{(d-4)/2}$%
				, then for every $h_1,\ldots,h_m \in C_0^\infty(\mathbb{R}^d)$, the random
				variables $\langle h_j, \phi_n \rangle$ converge in distribution to a
				centered joint Gaussian random variable with covariance 
				\begin{equation*}
					\int \, \int h_j(z) \, h_k(w) \, |z-w|^{4-d} \, dz \, dw .
				\end{equation*}
				
				\item If $d=4$, if $a_{n}=\sqrt{3\log n}$, then for every $h_{1},\ldots
				,h_{m}\in C_{0}^{\infty }(\mathbb{R}^{d})$ with 
				\begin{equation*}
					\int h_{j}(z)\,dz=0,\;\;\;j=1,\ldots ,m,
				\end{equation*}%
				the random variables $\langle h_{j},\phi _{n}\rangle $ converge in
				distribution to a centered Gaussian random variable with variance 
				\begin{equation*}
					-\int \,\int h_{j}(z)\,h_{k}(w)\,\log |z-w|\,dz\,dw.
				\end{equation*}
			\end{itemize}
		\end{theorem}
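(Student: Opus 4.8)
The plan is to reduce everything to the covariance structure of the spin field, which in turn is governed by the UST. The key observation is that for $x,y \in A_N$, we have $\E[Y_{x,n} Y_{y,n}] = \Prob[x \leftrightarrow y \text{ in the USF}]$, because two vertices in the same tree component get the same coin and two vertices in distinct components get independent coins. So the whole theorem amounts to (i) computing the limit of $a_n^2 \, n^{-d}\sum_{x,y} h_j(x/n) h_k(y/n)\, \Prob[x\leftrightarrow y]$ and matching it to the stated kernels, and (ii) upgrading convergence of covariances to convergence in distribution.

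For step (i), I would use Wilson's algorithm: the event $\{x \leftrightarrow y\}$ in the wired USF on $A_N$ is exactly the event that the loop-erased random walk from $x$, stopped when it exits $A_N$, is hit by the random walk from $y$ before that walk exits $A_N$; equivalently it is controlled by the non-intersection / escape probabilities that are the subject of Theorem \ref{maintheoremjune}. For $d\ge 5$ the connectivity probability $\Prob[x\leftrightarrow y]$ is comparable to $|x-y|^{2-d}$ times the escape probability (a constant in the transient regime), and summing against the test functions with the scaling $a_n = a\,n^{(d-4)/2}$ produces the Riesz kernel $|z-w|^{4-d}$ after passing to the Riemann-sum limit; the constant $a$ absorbs the lattice-dependent escape constant and the Green's function normalization. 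For $d=4$ this is where the logarithmic corrections enter: the escape probability of 4D LERW decays like $(\log n)^{-1/3}$ (this is the content of Theorem \ref{maintheoremjune}), and the 4D Green's function already carries a $\log$; tracking both, $\Prob[x\leftrightarrow y]$ behaves like $(\text{const})\,\frac{\log(N/|x-y|)}{(\log n)^{?}}\cdot |x-y|^{-2}$ up to the right powers of $\log$, and the choice $a_n = \sqrt{3\log n}$ together with the mean-zero condition $\int h_j = 0$ is exactly what is needed to kill the divergent constant part of $\log(N/|x-y|)$ and leave the convergent kernel $-\log|z-w|$. The mean-zero hypothesis is essential here precisely because $\int\int h_j(z)h_k(w)\log|z-w|\,dz\,dw$ only makes sense modulo additive constants in the kernel, and without $\int h_j=0$ the paired sums would blow up like $\log N$.

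For step (ii), the convergence in distribution, I would not try to prove a general CLT for the whole field but instead exploit the block structure. Condition on the USF; then $\langle h_j, \phi_n\rangle$ is, up to the deterministic normalization, a sum over forest components $C$ of $\sigma_C \cdot (\text{weight of } C)$ where $\sigma_C = \pm 1$ are i.i.d.\ fair signs. This is a sum of conditionally independent mean-zero bounded random variables, so a Lindeberg-type CLT applies as soon as one shows that (a) no single component contributes a non-vanishing fraction of the total variance (a diameter/size bound on USF components, which follows from standard UST estimates plus the scaling $N = n(\log n)^{1/4}$ chosen so that boundary effects are negligible), and (b) the conditional variance concentrates around its mean, which is the quantity computed in step (i). Joint convergence of the vector $(\langle h_j,\phi_n\rangle)_{j=1}^m$ follows by the Cramér–Wold device, since any linear combination $\sum c_j h_j$ is again a test function (with $\int \sum c_j h_j = 0$ in the $d=4$ case), reducing to the one-dimensional statement.

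The main obstacle is step (i) in the critical dimension $d=4$: extracting the precise constant $3$ and the exact power $(\log n)^{-1/3}$ requires the sharp asymptotics of the LERW escape probability from Theorem \ref{maintheoremjune}, and then carefully combining the LERW-to-USF dictionary (Wilson's algorithm) with the logarithmically-corrected 4D potential theory, making sure that the error terms — both the $o(1)$ corrections in the escape probability and the discretization error in the Riemann sums — are uniform enough over the relevant range of $|x-y|$ (from order $1$ up to order $N$) to survive summation against the test functions. The variance-concentration part of step (ii) also leans on second-moment estimates for USF connectivities, i.e.\ on controlling $\Prob[x\leftrightarrow y, \; z \leftrightarrow w]$, which again reduces to multi-point non-intersection estimates for LERW and SRW.
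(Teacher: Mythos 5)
Your step (i) — reduce the covariance of $\langle h_j,\phi_n\rangle$ to the USF two-point connectivity, express that via Wilson's algorithm as the probability that a SRW hits an independent LERW, and then pull in the sharp asymptotics from Theorem~\ref{maintheoremjune} and the Green's function estimate (with the mean-zero condition killing the divergent constant in $\log(N/|x-y|)$) — is exactly what the paper does via Lemmas~\ref{feb3.lemma1}, \ref{lem:bn} and~\ref{lem:2.4}. (One small slip: for $d\ge 5$ the connectivity is comparable to the \emph{biharmonic} Green's function $G^2(x,y)\asymp|x-y|^{4-d}$, i.e.\ the expected number of SRW/LERW intersections; it is not $|x-y|^{2-d}$ times a constant.)

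Step (ii) is where you diverge from the paper, and where your proposal has a real gap. You propose to condition on the forest and apply a Lindeberg CLT to $\sum_C W_C\,\sigma_C$, asserting that ``no single component contributes a non-vanishing fraction of the total variance'' follows from ``standard UST estimates''. In $d=4$ this is far from automatic. The expected number of vertices $y$ with $|y|\leq Kn$ lying in the same component of the wired USF on $A_N$ as a fixed $x$ is of order $n^4\log\log n/\log n$ — a nonvanishing fraction of all $\asymp n^4$ lattice points in the window (this follows from $q_N(x,y)\asymp\log(N/|x-y|)/\log N$ and $N=n(\log n)^{1/4}$). So a typical component $C$ is ``macroscopic'', and a crude bound gives $|W_C| \lesssim n^{-2}\sqrt{\log n}\cdot\#(C\cap \operatorname{supp} h(\cdot/n))$, which can be of size $n^2\log\log n/\sqrt{\log n}$, not $o(1)$. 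The only way $\max_C W_C^2$ is small (and the only way $\sum_C W_C^2$ is even $O(1)$) is through the cancellation of the mean-zero $h$ when summed \emph{within} a single macroscopic component. That cancellation is a genuinely nontrivial statement about the spatial distribution of a single USF component, not a generic diameter/size bound, and is essentially of the same depth as the covariance computation itself. Your proposal does not supply an argument for it. (The variance-concentration part (b) is also of fourth-moment type, as you note, so the Lindeberg route would not even save you moment estimates.)

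The paper sidesteps all of this by using the method of moments directly: it computes $\mathbb{E}\bigl[\langle h,\phi_n\rangle^{2k}\bigr]$ and verifies Wick's formula by showing, via inclusion-exclusion and Lemma~\ref{jul24.lemma1} (a bound on the probability that $k+1$ or more of $2k$ random walks collide), that the dominant contribution to the $2k$-th moment comes from perfect matchings of the $2k$ test points into pairwise-connected, mutually-disconnected pairs. This requires only multi-point connectivity upper bounds that factor through the pairwise non-intersection estimates you already have in step (i), and it never conditions on the forest or isolates a single component. If you want to keep the conditional-sign structure you describe, you would need to prove something like $\mathbb{E}\bigl[\sum_C W_C^4\bigr]\to 0$ with all the cancellation carefully tracked, which is not obviously easier than the moment computation.

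The Cramér--Wold reduction of the multi-test-function case to $m=1$ is the same in both approaches and is fine.
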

		\begin{remark}
			$\;$
			
			\begin{itemize}
				\item Gaussian fields on $\mathbb{R}^d$ with correlations as in Theorem~\ref{BGF} is called 
				$d$-dimensional bi-Laplacian Gaussian field (see \cite{FGF}).  
				
				\item For $d=4$, we could choose the cutoff $N = n(\log n)^\alpha$ for any $%
				\alpha > 0$. We choose $\alpha=\frac14$ for concreteness. For $d > 4$, we
				could do the same construction with no cutoff ($N = \infty$) and get the
				same result.
			\end{itemize}
		\end{remark}
		By Wilson's algorithm, the two-point correlation function of the field $\phi_n$ is proportional to the intersection  probability of an SRW and a LERW stopped upon hitting $\partial A_N$. Therefore Theorem~\ref{BGF}  essentially follows from Theorem~\ref{thm:long-range}. In particular, $G^2_n(w)$  there is the discrete
		biharmonic function that gives the covariance structure of the bi-Laplacian random field in the scaling limit. 
		We will give the full proof of Theorem~\ref{BGF} in Section~\ref{sec:field}, where we only deal with the case $d=4$. The $d\ge 5$ case can be proved
		in the same way but is much easier (see  \cite{sunwu} for a detailed argument).

		\section{Preliminaries}\label{sec:pre}
		In this section we recall and prove necessary lemmas about  SRW  and LERW which will be used frequently in the rest of the paper.  Throughout this section we retain the notations in Section~\ref{subsec:outline}.
		\subsection{Basic notations} 
		Given a vertex set $V\subset \Z^d$, $\partial V$ is the set of vertices on $\Z^d\setminus V$ who have  a neighbor in $V$, and $\overline V = V \cup \partial V$.
		A function $\phi$ on $\overline V$ is called \emph{harmonic} on $V$ if for each  $v\in V$,
		we have $\E^v[ \phi(S_1)]=\phi(v)$.
		When we say ``$\phi$ is harmonic on $V$'' then
		it is implicit that $\phi$ is defined on $\overline V$.
		
		We will use $c$ and $C$ to represent constants which may vary line by line. 
		We use the asymptotic notion that two nonnegative
		functions $f(x),g(x)$ satisfy $f\lesssim g$ if
		there exists a constant $C>0$ independent of $x$  such that $f(x)\le Cg(x)$. We write $f\gtrsim g$ if $g\lesssim f$ and write $f\asymp g$ if $f\lesssim g$ and $g\gtrsim f$. 
		Given a sequence $\{a_n\}$ and a nonnegative sequence $\{b_n\}$, we write $a_n\sim b_n$ if $\lim_{n\to\infty}a_n/b_n=1$. We write $a_n=O(b_n)$ if $|a_n|\lesssim b_n$.
		We write $a_n=o(b_n)$ if $\lim_{n\to\infty}|a_n|/b_n=0$. 
		When $\{b_n\}=\{1\}$, we may write $o(1)$ as $o_n(1)$ to indicate the dependence on $n$.
		
		We say that a sequence $\{\epsilon_n\}$ of positive numbers is 
		\emph{fast decaying} if it decays faster than every power of $n$, that is $n^k\epsilon_n=o_n(1)$ for 
		for every $k > 0$. We will write $\{\epsilon_n\}$ for fast decaying
		sequences. As is the convention for constants, the exact value of $%
		\{\epsilon_n\}$ may change from line to line. We will use implicitly the
		fact that if $\{\epsilon_n\}$ is fast decaying then so is $%
		\{\epsilon_n^{\prime }\}$ where \(\epsilon_n^{\prime }= \sum_{m \geq n} \epsilon_m.\)

		\subsection{Estimates for simple random walk on \texorpdfstring{$\Z^4$}{Lg}}
		\label{prelsec} In this subsection we provide facts about
		simple random walk in $\mathbb{Z}^4$, which will be frequently used
		in the rest of the paper.  
		We first recall the following facts about
		intersections of random walks in $\mathbb{Z}^4$ (see \cite{L4dint2,LSlowly}).
		\begin{proposition}
			\label{sep8.prop1} There exist $0<c_{1}<c_{2}<\infty $ such that the
			following is true. Suppose $S$ is a simple random walk on  $\mathbb{Z}^{4}$ starting at 0. Then 
			\begin{align*}
				\frac{c_{1}}{\sqrt{\log n}}& 
				\leq \Prob\left\{ S[0,n]\cap S[n+1,\infty ]=
				\eset\right\} \\& 
				\leq \Prob\left\{ S[0,n]\cap S[n+1,2n]=\eset\right\} 
				\leq \frac{c_{2}}{\sqrt{\log n}},
			\end{align*}%
			and if $2 \leq \alpha \leq n$,
			\begin{equation}\label{eq:srw-long-range}
				c_{1}\,\frac{\log \alpha }{\log n}\leq \Prob\left\{ S[0,n]\cap
				S[n\,(1+\alpha ^{-1}),\infty )\neq \eset\right\} \leq c_{2}\,\frac{\log
					\alpha }{\log n} .
			\end{equation}%
			Moreover, if $S^{1}$ is an independent simple random walk starting at $z\in 
			\mathbb{Z}^{4}$, 
			\begin{equation}
				\Prob\left\{ S[0,n]\cap S^{1}[0,\infty )\neq \eset\right\} \leq c_{2}\,\frac{%
					\log a}{\log n},   
				\label{sep10.1}
			\end{equation}%
			where \(	a=\max \left\{ 2,\sqrt{n}/|z|\right\} .\)
		\end{proposition}
		
		An important corollary of Proposition~\ref{sep8.prop1}  is that 
		\begin{equation}  \label{nov22.7}
			\sup_{n} n \, \mathbb{E}\left[H(\eta^n) \right] \leq \sup_n n \, \mathbb{E}%
			\left[H(\omega_n) \right] < \infty ,
		\end{equation}
		and hence 
		\begin{equation*}
			\exp\left\{-\mathbb{E}\left[H(\eta^n)\right]\right\} = 1 - \mathbb{E}\left[%
			H(\eta^n)\right] + O(n^{-2}),
		\end{equation*}
		It follows that if $\phi_n$ is defined as in 
		\eqref{eq:def-phi} and $m < n$, then
		\begin{equation}\label{eq:phi-asym}
			\phi_n = \phi_m \left[ 1 + O(m^{-1})\right] \prod_{j=m+1}^n \left[1 - 
			\mathbb{E}\left[H(\eta^j)\right] \right].
		\end{equation}
		
		\begin{corollary}	\label{cor:har} 
			There exists $c < \infty$ such that if $n\in \N$,
			$\alpha \geq 2$,  $0 < u < 1$,  
			$m = m_{n,\alpha} = (1+ \alpha^{-1}) \, n$ and \(Y_{n,\alpha} = \max_{j \geq m} H(S_j, S[0,n])\), then
			\begin{equation*}
				\Prob\left\{Y_{n,\alpha} \geq \frac{\log \alpha} { ({\log n})^u} \right\} \leq \frac{c}{(\log n)^{1-u}}.
			\end{equation*}
		\end{corollary}
		
		\begin{proof}  We fix $n,\alpha,u$ and define the stopping time $\tau$ by
			\[  \tau = \min\left\{j \geq m: H(S_j,S[0,n] )\geq  
			\frac{\log \alpha} {   ({\log n})^u}\right\}. \]
			The strong Markov property of $S$ and the definition of $\tau$ imply that
			\[  \Prob\left\{S[0,n] \cap S[m,\infty)
			\neq \eset \mid   \tau < \infty \right \} \geq  
			\frac{\log \alpha} {   ({\log n})^u}, \]
			and hence, using \eqref{eq:srw-long-range},
			\[  \Prob\{\tau < \infty\} \leq \frac{   ({\log n})^u}{\log \alpha} \, \Prob\{S[0,n] \cap S[m,\infty)
			\neq \eset \} \leq \frac{c}{(\log n)^{1-u}}.\qedhere\]
		\end{proof}
		
		\begin{lemma}
			\label{standardlemma} {There exists $c>0$
				such that the followind holds for all $n \in \N$.
				\begin{itemize}
					\item if $\phi$ is a positive (discrete) harmonic function on $C_n$ and $x \in C_{n-1}$,
					\begin{equation}  \label{harmonic}
						\left|\log[\phi(x)/\phi(0)] \right| \leq c \, |x| \, e^{-n}.
					\end{equation}
					
					\item  If $m < n$,  $V \subset C_{n-m}$, and $\tilde V
					= V \setminus C_{n-m-1}$,
					\begin{equation}  \label{nov15.1}
						c^{-1} \, e^{-2m} \leq 	\Prob\{S[\sigma_n,\infty) \cap C_{n-m} \neq \eset \mid \F_n \} \leq c \,
						e^{-2m};
					\end{equation}	
					\begin{equation}  \label{nov15.1.improve}
						c^{-1} \, e^{-2m} H(\tilde V)
						\leq 		\Prob\{S[\sigma_n,\infty) \cap V \neq \eset \mid \F_n \} \leq c \, e^{-2m} H(V).
					\end{equation}
					\begin{equation}  \label{nov15.1.improve3}
						\Es(V) \geq 
						\Prob\{S[0,\sigma_n] \cap V = \eset \}
						\, [1-c \,e^{-2m} \, H(V)] .
					\end{equation}
				\end{itemize}}
			\end{lemma}
			
			\begin{proof} The inequalities \eqref{harmonic}
				and \eqref{nov15.1} are standard estimates, see, e.g.,
				\cite[Theorem 6.3.8, Proposition 6.4.2]{lawler-limic}.
				The Harnack principle \cite[Theorem 6.3.9]
				{lawler-limic} shows that $H(z,V) \asymp
				H(z',V)$ for $z,z'\in \partial C_{n-m+1}$, and
				by stopping at time $\sigma_{n-m+1}$ we see
				that
				\[  H(V) \geq \min_{z \in \partial C_{n-m+1}
				}  H(z,\bar V) .\]
				This combined with  \eqref{nov15.1} and the strong
				Markov property
				gives the upper bound in
				\eqref{nov15.1.improve}.  To get the lower
				bound, one uses the
				Harnack principle to see that for $z
				\in  \partial C_{n-m+1}$, $H(z,V^+)
				\asymp H(V^+)$ and $H(z, V \setminus V^+)
				\asymp H(V\setminus V^+)$, where
				\[  V^+ = \tilde V \cap \{(z_1,\ldots,z_4)
				\in \Z^4: z_1 \geq 0\}.\]
				Finally, \eqref{nov15.1.improve3} follows from
				the upper bound in \eqref{nov15.1.improve} and
				the strong Markov property.
			\end{proof}

			\begin{lemma}
				\label{nov21.lemma1} Let $U_n$ be the event that there exists $k \geq
				\sigma_n$ with 
				\begin{equation*}
					\LE(S[0,k]) \cap C_{n - \log^2 n} \neq \hat S[0,\infty)\cap C_{n - \log^2 n }.
				\end{equation*}
				Then $\Prob(U_n)$ is fast decaying.
			\end{lemma}
			
			\begin{proof} 
				By the loop-erasing process, we can see that
				the event $U_n$ is contained in  the event that either
				\[     S[\sigma_{n - \frac 12 \log^{2} n},\infty) \cap
				C_{n - \log^2 n}  \neq \eset   \qquad\textrm{or}\qquad S[\sigma_n,\infty) \cap C_{n- \frac 12 \log^{2} n}
				\neq \eset.\]
				The probability that either of these happens is fast decaying
				by \eqref{nov15.1}. 
			\end{proof}
			The next proposition gives a quantitative estimate on
			the slowly recurrent nature of a simple
			random path in $\Z^4$.    
			\begin{proposition}
				\label{nov15.prop1} If $\Lambda(m,n) = S[0,\infty) \cap A(m,n),$ then the
				sequences 
				\begin{equation*}
					\Prob\left\{H[\Lambda(n-1,n)] \geq \frac{\log^2n}{n} \right\}
					\qquad \textrm{and}\qquad 
					\Prob\left\{H(\omega_n) \geq \frac{\log^4 n}{n} \right\}
				\end{equation*}
				are fast decaying.
			\end{proposition}
			
			\begin{proof}
				For any $z\in \Z^4$, let $S^z$ be a simple random walk starting from $z$ independent of $S$.  
				Let 
				$  \Lambda^z_j= \Lambda^z_j(n-1,n)
				= S^z[0,j] \cap A(n-1,n)$ for $j\in \N\cup \{\infty\}$.
				By the definition of $H$ and Proposition~\ref{sep8.prop1}, there exists a positive constant $c$  such that for each $z$ with $|z|\ge e^{n-1}$,
				\[
				\E[H(\Lambda^z_\infty)]  
				=\P\{ S[0,\infty) \cap S^z[0,\infty) \cap A(n-1,n) \neq \emptyset\}\le  \frac{c}{4n},
				\] 	
				From now on we assume $|z|\ge e^{n-1}$.  Then by the Markov inequality,
				\begin{equation}\label{eq:Markov}
					\Prob\left\{H(\Lambda^z_\infty)   \geq 
					\frac{c}{2n}  \right\} \leq \frac 12.
				\end{equation} 
				For each  $k \in \N$, let 	$\tau_k=\inf\left \{j: H(\Lambda^z_j ) \ge  {ck}/n\right\}$. 
				On the event $\tau_k<\infty$, we have  $H(\Lambda^z_{\tau_k-1})< ck/n$ and $S^z_{\xin{\tau_k}}  \in A(n-1,n)$.
				Since 
				$$H(A \cup B)\leq H(A) + H(B) \qquad\textrm{for any} \qquad A,B\subset \Z^4,$$  for $n$ sufficiently large, we have 
				$$H(\Lambda_{\tau_k})\le H(\Lambda^z_{\tau_k-1})+H(S^z_{\tau_k}) \leq   c(k+ \frac 12)/n.$$ 
				Moreover, combined with \eqref{eq:Markov} we see that for $n$ sufficiently large,
				\begin{align*}
					\Prob\{\tau_{k+1} < \infty \mid \tau_k < \infty\}&\le \sum_{w\in A(n-1,n)}\P[S^z_{\tau_k} =w|\tau_k<\infty]  \P[H(\Lambda^w_\infty)\ge \frac{c}{2n}]\\
					&\le \frac12\sum_{w\in A(n-1,n)}\P[S^z_{\tau_k} =w|\tau_k<\infty] =\frac12.
				\end{align*}
				Therefore $  \Prob\{\tau_{k} <\infty\} \leq 2^{-k}.$
				Setting $k =  \lfloor c^{-1} \log^2n\rfloor$, we see  that the first sequence in Proposition~\ref{nov15.prop1} is fast decaying.
				
				For the second sequence, note that on the event $\{H(\omega_n) \geq \log^4 n/{n} \}$, either $\omega_n \not\subset A(n-\log^2 n,  n)$ or there exists a $j\in [n-\log^2n,n]$ such that $H[\Lambda(j-1,j)] \geq {\log^2n}/{n} $. 
				We use  \eqref{nov15.1} to see that 
				$\Prob\{\omega_n \not\subset A(n-\log^2 n,  n)\}$
				is fast decaying.
			\end{proof}

			\subsection{Loop-free times}
			
			\label{loopfreesec} One of the technical nuisances in the analysis of the
			loop-erased walk is that if $j < k$, it is not necessarily the case that 
			\begin{equation*}
				\LE(S[j,k]) = \LE(S[0,\infty)) \cap S[j,k].
			\end{equation*}
			However, this is the case for special times which we call
			\emph{loop-free} times.
			We say that $j$ is a \emph{(global) loop-free time} if 
			\begin{equation*}
				S[0,j] \cap S[j+1,\infty) = \emptyset .
			\end{equation*}
			Proposition \ref{sep8.prop1} shows that the probability that $j$ is
			loop-free is comparable to $(\log j)^{-1/2}$. From the definition of
			chronological loop erasing we can see the following. If $j < k$ and $j,k$
			are loop-free times, then for all $m \leq j < k \leq n$, 
			\begin{equation}  \label{loopfree}
				\LE\left(S[m,n]\right) \cap S[j,k] = \LE\left(S[0,\infty) \right) \cap S[j,k]
				= \LE\left(S[j,k]\right).
			\end{equation}
			
			It will be important for us to give upper bounds on the probability that
			there is \emph{no} loop-free time in a certain interval of time. If $m \leq
			j < k \leq n$, let $I( j,k;m,n)$ denote the event that for all $j \leq i
			\leq k-1$, 
			\begin{equation*}
				S[m,i] \cap S[i+1,n] \neq \emptyset .
			\end{equation*}
			Proposition \ref{sep8.prop1} gives a lower bound on $\Prob\left[I(n,2n;0,3n)%
			\right]$, 
			\begin{equation*}
				\Prob\left[I(n,2n;0,3n)\right] \geq \Prob\{S[0,n] \cap S[2n,3n] \neq \eset %
				\} \asymp \frac{1}{\log n}.
			\end{equation*}
			The next lemma shows that
			\begin{equation}\label{eq:loop-free}
				\Prob\left[I(n,2n;0,3n)\right] \asymp 1/\log n
			\end{equation}
			by giving the matching upper bound.
			
			\begin{lemma}\label{lem:loop-free}
				There exists $c < \infty$ such that \(\Prob\left[I(n,2n;0,\infty)\right] \leq c/\log n\).
			\end{lemma}
			
			\begin{proof}  Let $E = E_{n}$ denote the complement
				of $I(n,2n;0,\infty)$. We need to show that $\Prob(E)
				\geq 1 - O(1/\log n)$.
				
				Let $k_n =\lfloor n/(\log n)^{3/4}\rfloor$ and let $A_i = A_{i,n}$ be the event
				that  
				\[  A_i 
				= \left\{             S[n + (2i-1)k_n
				,n+2ik_n] \cap S[n+ 2ik_n + 1,
				n+(2i+1)k_n] = \eset\right\} .\]
				and consider the events $A_1,A_2,\ldots, A_\ell$
				where $\ell =\lfloor (\log n)^{3/4}/4\rfloor $.  These are  $\ell$
				independent 	events each with probability  greater than $c\,(\log n)^{-1/2}$ by Proposition \ref{sep8.prop1}. Therefore
				\[1-   \Prob(A_1\cup \cdots  \cup A_\ell)= \prod_{i=1}^{\ell} [1-\P(A_i)] \le  \exp\{-O((\log n)^{1/4})\} = o( \frac{1}{\log^3n}).\]
				Let $B_i = B_{i,n}$ be the event \(\left\{S[0,n + (2i-1)k_n]\cap S[n+ 2ik_n,\infty) = \eset\right\}\).
				By \eqref{eq:srw-long-range} in Proposition \ref{sep8.prop1},  $\Prob(B_i^c)
				\leq  c\, \log \log  n/\log n$. Therefore 
				\[   \Prob (B_1 \cap \cdots \cap B_\ell)
				\geq 1 - \frac{c\ell\, \log \log n}{\log n}
				\geq 1- O\left(\frac{\log \log n}{(\log n)^{1/4}}
				\right).\]
				For $1\le i\le \ell$, on the event $A_i\cap (B_1 \cap \cdots \cap B_\ell) $ the time $2ik_n$ is  loop-free, hence $E$ occurs.
				Therefore	
				\begin{equation}  \label{sep7.1}
					\Prob(E) \geq \Prob \left[(A_1 \cup \cdots \cup A_\ell)
					\cap (B_1 \cap \cdots \cap B_\ell)\right]
					\geq 1 - O\left(\frac{\log \log n}{(\log n)^{1/4}}
					\right).
				\end{equation}
				This is a good estimate, but we need to improve on it.
				
				Let $C_j, j=1.,\ldots,5, $  denote the independent events
				(depending on $n$) 
				\[ I\left( n\, \left[1 + \frac{3(j-1) + 1}{15}\right],
				n\, \left[1 + \frac{3(j-1) + 2}{15} \right]; n+ \frac{(j-1)n}{5},
				n + \frac{jn}{5}\right). \]
				By \eqref{sep7.1} we see that $\Prob[C_j] \leq 
				o\left(1/{(\log n)^{1/5}}\right)$, and hence 
				\[   \Prob(C_1 \cap \cdots \cap C_5)  \leq o\left(\frac{1}{\log n}\right).\]
				Let $D = D_n$ denote the event that at least one of the following
				ten things happens:
				\[  S\left[0,  n\, \left(1 + \frac{j-1 }{5} 
				\right) \right]\cap S\left[ n\, \left(1 + \frac{3(j-1) + 1}{15}\right) , \infty
				\right) \neq \eset , \;\;\; j=1,\ldots,5;\]
				\[ S\left[0,  n\,  \left(1 + \frac{3(j-1) + 2}{15}\right) 
				\right] \cap S\left[ n\, \left(1 + \frac{j}{5}\right) , \infty\right)
				\neq \eset, \;\;\; j=1,\ldots,5.\]
				Each of these events has probability comparable to  $ 1/\log n $ and hence
				$\Prob(D) \asymp 1/\log n$.  Also, 
				\[    I(n,2n;0,\infty)  \subset (C_1 \cap \cdots \cap C_n) \cup \, D.\]
				Therefore, $\Prob \left[I(n,2n;0,\infty) \right] \leq c/\log n$. 
			\end{proof}
			
			\begin{corollary}
				$\;$ \label{nov21.cor1}
				
				\begin{enumerate}
					\item There exists $c < \infty$ such that if $0 \leq j \leq j+ k \leq n$,
					then 
					\begin{equation}\label{eq:loop-free2}
						\Prob\left[I(j,j+k;0,n)\right] \leq \frac{c\,\log (n/k)} {\log n}. 
					\end{equation}
					
					\item Given $0 < \delta < 1$, let $I_{\delta,n} := \bigcup_{j=0}^{n-1} I(j,j+\delta n;0,n)$. 
					Then there exists a positive constant  $c$ such that for all $n\in \N$ and $\delta\in (0,1)$, we have 
					\begin{equation}  \label{jun7.4}
						\Prob\left[I_{\delta,n} \right] \leq \frac{c \, \log(1/\delta)} {\delta \,
							\log n}.
					\end{equation}
					
					\item There exist $c < \infty$ and a positive
					integer $\ell$ such that the following holds
					for all positive integers $n$.
					Let $\tilde I(m,r)$ denote the event that there is no loop-free point $ 
					j$ with $\sigma_m \leq j \leq \sigma_r$,
					and let $k = k_n = \lfloor \log n\rfloor $.
					Then 
					\begin{equation}  \label{jun7.1}
						\Prob\{\tilde I(n-\ell k, n + \ell k) \mid \F_{n-3\ell k} \} \leq c/n .
					\end{equation}
				\end{enumerate}
			\end{corollary}
			
			\begin{proof}$\;$
				
				\begin{enumerate}
					
					\item  It suffices to prove under the assumption that $k \geq n^{1/2}$.
					Note that $I(j,j+k;0,n)$ is contained in the union
					of the following two events:
					\[ I(j,j+k; j-k, j+2k),\]
					\[ \{S[0,j-k] \cap S[j,n] \neq \eset \}, \qquad \textrm{and}\qquad \{S[0,j] \cap S[j+k,n] \neq \eset \}.\]
					Since $k \geq n^{1/2}$, the probability of the
					first event is $O(1/\log n)$ by Lemma~\ref{lem:loop-free}. By \eqref{eq:srw-long-range},  the probabilities of the second two events are $O(\log (n/k)/\log n)$.
					This gives \eqref{eq:loop-free2}.
					
					\item By \eqref{eq:loop-free2}, $\P\{ I( i\delta n/3, (i+1)\delta n/3 ;0,n) \} =O(\log(\delta^{-1})/\log n)$ for all $0\le i \le \lceil3/\delta\rceil$.
					Now \eqref{jun7.4} follows from the fact that  $I_{\delta,n}$ can be covered by these $I( i\delta n/3, (i+1)\delta n/3 ;0,n)$'s.
					
					\item {We will first consider
						walks starting at $z \in \p C_{n - 3 \ell k} $
						(with constants independent of $z$).  Let
						$E_n$ be the event
						\[  E_n = 
						\{\sigma_{n - \ell k} \leq e^{2n}
						\leq 2 \, e^{2n}  \leq \sigma_{n + \ell k},
						\;\;S[\sigma_{n-\ell k},\infty) \cap C_{n-2\ell k}  = \eset\}.\]
						Using 			\eqref{jul18.1} and  \eqref{nov15.1},
						we can choose $\ell$
						sufficiently large so that
						$\P(E_n)\ge 1-1/n$. 
						On the event $E_n$, we have $\tilde I(n-\ell k,n+\ell k)\subset  I(e^{2n},2e^{2n};0,\infty)$.
						Hence, by 
						Lemma~\ref{lem:loop-free}, we have $\P[\tilde I(n-\ell k,n+\ell k)] \le O(n^{-1})$.	
						
						More generally, if we start the random walk at the
						origin, stop at time $\sigma_{n - 3\ell k	}$, and then
						start again, we can use the result in the previous
						paragraph.  Since $S[\sigma_{n-\ell k},\infty) \cap C_{n-2\ell k}  = \eset$ on the event $E_n$, attachment of the initial part of
						the walk up to $\sigma_{n - 3\ell k	}$ will not effect
						whether a time after $\sigma_{n - \ell k}$  is loop-free. This concludes the proof.
						\qedhere }
				\end{enumerate}

			\end{proof}

			\subsection{Green function estimates}\label{sec;Green}
			Recall the Green function $G(\cdot,\cdot)$ on $\Z^4$ and $G_n(\cdot,\cdot)$ defined in Section~\ref{subsec:nis}.
			We write $G(x)=G(x,0)=G(0,x)$ and $G_n(x)=G_n(x,0)=G(0,x)$. 
			As a standard estimate (see \cite{lawler-limic}), we have
			\begin{equation}\label{eq:Green0}
				G(x) = \frac{2}{\pi^2 \, |x|^2} + O(|x|^{-4}), \;\;\;\; |x | \rightarrow \infty.
			\end{equation}
			Here and throughout we use the convention that if we say that a function on 
			$\mathbb{Z}^{d}$ is $O(|x|^{-r})$ with $r>0$, we still imply that it is
			finite at every point. In other words, for lattice functions, $O(|x|^{-r})$
			really means $O(1\wedge |x|^{-r})$. We do not make this assumption for
			functions on $\mathbb{R}^{d}$ which could blow up at the origin.
			\begin{lemma}\label{lem:Green2}
				For  $w \in C_{n}$, let
				\begin{equation*}
					\hat G^2_n(w) = \sum_{z \in \mathbb{Z}^4} G(0,z) \, G_n(w,z) = \sum_{z \in
						C_n} G(0,z) \, G_n(w,z).
				\end{equation*}Then
				\begin{equation*}
					\hat G^2_n(w) = \frac{8}{\pi^2} \, \left[n - \log |w|\right] \, + O(e^{-n})
					+ O( |w|^{-2} \, \log |w|).
				\end{equation*}
				In particular, if $w \in \partial C_{n-1}$,
				\begin{equation}  \label{dec16.1}
					\hat G^2_n(w) = \frac{8}{\pi^2} + O(e^{-n}).
				\end{equation}
			\end{lemma}

			\begin{proof}  Let $f(x) =   \frac 8{\pi^2} \,   \log |x|$ and note that
				\[    \Delta f(x) =  \frac{2}{\pi^2\, |x|^2} + O(|x|^{-4})
				=   G(x) + O(|x|^{-4}). \]
				where $\Delta$ denotes the discrete Laplacian.  Also,
				we know that 
				\[ 
				f(w)  =   \E^w\left[f(S_{\sigma_n})\right]
				- \sum_{z \in C_n}  G_n(w,z) \,  \Delta f(z)  
				\]
				(this holds for any function $f$). 	Since $e^{n} \leq |S_{\sigma_n}| \leq e^{n}+1$, we have \( \E^w\left[f(S_{\sigma_n})\right] =\frac {8n}{\pi^2}  + O(e^{-n})\). 
				Therefore,
				\[  \sum_{z \in C_n} G_n(w,z) \,  G(z)
				= \frac{8}{\pi^2} \, \left[n - \log |w| \right]
				+ O(e^{-n}) + \epsilon, \]
				where
				\[   |\epsilon| \leq  \sum_{z \in C_n} \, 
				G_n(w,z)
				\,   O(|z|^{-4})
				\leq   
				\sum_{z \in C_n} \, 
				O(|w-z|^{-2}) 
				\,     O(|z|^{-4})  .\]
				
				We split the sum on the right-hand side into three pieces.
				\begin{eqnarray*}
					\sum_{|z| \leq |w|/2}   O(|w-z|^{-2}) 
					\,     O(|z|^{-4}) & 
					\leq & c\, |w|^{-2} \, \sum_{|z| \leq |w|/2} 
					O(|z|^{-4})\\
					& \leq & c\, |w|^{-2} \, {\log |w|}  ,
				\end{eqnarray*}
				\begin{eqnarray*}
					\sum_{|z-w| \leq |w|/2}  O(|w-z|^{-2}) 
					\,    O(|z|^{-4}) & 
					\leq & c\, |w|^{-4} \, \sum_{|x| \leq |w|/2} 
					O(|x|^{-2})\\
					& \leq & c\, 
					|w|^{-2} ,
				\end{eqnarray*}      
				If we let $C_n'$ the the set of $z \in C_n$ with
				$|z| > |w|/2 $ and $|z-w| > |w|/2$, then
				\[  \sum_{z \in C_n'} 
				O(|w-z|^{-2}) 
				\,     O(|z|^{-4})
				\leq   \sum_{|z| > |w|/2    }
				O(|z|^{-6} ) \leq c \, |w|^{-2}.\qedhere \]
			\end{proof}
			\begin{lemma}
				\label{lem:Green} 
				If $1 \leq m < n$ and $x \in C_m$, 
				\[ 	\hat G_n^2(x)- G_n^2(x) =\frac{\pi^2}{2}+  O\left(  e^{m-n}\right). \]  
			\end{lemma}
			
			\begin{proof}  Set $N=e^n$. Using the martingale $M_t = |S_t|^2 - t$, we see that
				\begin{equation} \label{may30.1}
					\sum_{w \in C_n} G_n(x,w) = N^2  - |x|^2 +  O(N) . 
				\end{equation}

				By the strong Markov property, for all $w\in C_n$, 
				\[ \min_{N \leq |z| \leq N+1}
				G(z,x) \leq G(w,x) - G_n(w,x) 
				\leq \max_{N \leq |z| \leq N+1} G(z,x).\]
				Set $\delta = (1 +|x|)/N$. By \eqref{eq:Green0}, we have
				\[      G_n(x,w) = G(x,w)  - \frac{2}{\pi^2 N^2}
				\, [1 + O(e^{m-n})].\]
				Using \eqref{may30.1}, we see that 
				\begin{eqnarray*}
					\lefteqn{\sum_{w \in C_n} G_n(x,w) \, G_n(0,w)}
					\\
					& = & \sum_{w \in C_n}
					\left[ G(x,w)-
					\frac{2}{\pi^2 \, N^2}
					+  O\left(e^{m-n} N^{-2}\right)\right]
					\, G_n(0,w)\\
					& = &  O(\delta) - \frac{2}{\pi^2}
					+ \sum_{w \in C_n} G(x,w)
					\,  G_n(0,w). \qedhere
				\end{eqnarray*}
			\end{proof}

			\section{Slowly recurrent set and  the subsequential limit}\label{sec:slow}
			Simple random walk paths in $\mathbb{Z}^4$ are ``slowly recurrent'' sets in
			the terminology of \cite{LSlowly}. In Section~\ref{nicesec} we will consider a
			subcollections $\niceset_n$ of the collection of slowly recurrent sets and
			give uniform bounds for escape probabilities for such sets. Then in Section~\ref{subseqsec} we use these estimates to prove  Proposition~\ref{nov22.cor5} and~\ref{subsequenceprop}.
			{This section does  not rely on notions and results in  \cite{LSlowly,3rdmoment} as we  will give a new and self-contained treatment.  
			}
			
			\subsection{Sets in \texorpdfstring{$\niceset_n$}{Lg}}
			\label{nicesec}
			Given a subset $V \subset \mathbb{Z}^4$ and $m\in \mathbb{N}$ we write 
			\begin{equation*}
				V_m = V \cap A(m-1,m), \qquad \textrm{and}\qquad  h_m = h_{m,V} = H(V_m).
			\end{equation*}
			Using \greg{\eqref{nov15.1.improve}},  we can see that there exist $0 < c_1 < c_2 <
			\infty$ such that 
			\begin{equation}\label{eq:Harnack}
				c_1 \, h_m \leq H(z,V_m) \leq c_2 \, h_m , \;\;\;\;\; \forall \,z \in
				C_{m-2} \cup A(m+1,m+2).
			\end{equation}
			
			\begin{definition}\label{def:slow}
				Let $\niceset_n$ denote the collection of subsets $V$ of $\mathbb{Z}^4$ such
				that for all integers $m \geq \sqrt n$, 
				\begin{equation*}
					H(V_m) \leq \frac{\log^2m}{m} .
				\end{equation*}
			\end{definition}
			
			Note that  $\niceset_1 \subset \niceset_2 \subset \cdots$.
			Also if $\tilde V \subset V$ and $V \in \niceset_n$,
			then $\tilde V \in \niceset_n$,   The following is
			an immediate corollary of Proposition \ref{nov15.prop1}.
			
			\begin{proposition}
				$\Prob\{S[0,\infty) \not\in \niceset_n\} $ is a fast decaying sequence.  
			\end{proposition}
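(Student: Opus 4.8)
The plan is to prove the statement by a direct union bound over the annular scales, feeding into the first estimate of Proposition~\ref{nov15.prop1}. By the definition of $\niceset_n$, the event $\{S[0,\infty)\notin\niceset_n\}$ is exactly the event that there is an integer $m\ge\sqrt n$ for which $H\big(S[0,\infty)\cap A(m-1,m)\big) > \log^2 m/m$, and $S[0,\infty)\cap A(m-1,m)$ is precisely the set $\Lambda(m-1,m)$ appearing in Proposition~\ref{nov15.prop1}. So the first step is simply to write
\[
\Prob\{S[0,\infty)\notin\niceset_n\}\;\le\;\sum_{m\ge\sqrt n}\Prob\left\{H[\Lambda(m-1,m)]>\frac{\log^2 m}{m}\right\}.
\]

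Next I would invoke Proposition~\ref{nov15.prop1}, whose first conclusion says precisely that $\epsilon_m:=\Prob\{H[\Lambda(m-1,m)]\ge\log^2 m/m\}$ is a fast decaying sequence; in fact, reading the constants off its proof gives the quantitative bound $\epsilon_m\le 2^{-\lfloor c^{-1}\log^2 m\rfloor}\le Ce^{-c'(\log m)^2}$, from which it is visible both that $\{\epsilon_m\}$ decays faster than any power and that $\sum_m\epsilon_m<\infty$. The last step is to observe that the right-hand side above equals $\epsilon'_{\lceil\sqrt n\rceil}$ with $\epsilon'_k:=\sum_{m\ge k}\epsilon_m$, which is fast decaying by the convention recorded just before Section~\ref{prelsec}, and that precomposing a fast decaying sequence with $n\mapsto\lceil\sqrt n\rceil$ keeps it fast decaying (for any $j$, $n^j\epsilon'_{\lceil\sqrt n\rceil}\le k^{2j}\epsilon'_k$ with $k=\sqrt n$, which tends to $0$ as $n\to\infty$).

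There is essentially no obstacle here: the argument is a union bound plus bookkeeping. If anything, the only point worth stating carefully is the last one — that a union over the infinitely many scales $m\ge\sqrt n$ of fast decaying probabilities still yields a sequence fast decaying in $n$, which is where the $\sqrt n$ cutoff in the definition of $\niceset_n$ (rather than, say, $n$) and the square-root change of variables enter.
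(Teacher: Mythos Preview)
Your proof is correct and is essentially the same approach as the paper's: the paper simply declares the proposition an immediate corollary of Proposition~\ref{nov15.prop1} without writing any details, and what you have written is exactly the natural unpacking of that remark --- a union bound over $m\ge\sqrt n$, the fast decay of each summand from Proposition~\ref{nov15.prop1}, and the tail-sum/reindexing observation already recorded before Section~\ref{prelsec}.
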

			
			Let $E_m$ denote the event 
			\begin{equation*}
				E_m = E_{m,V}= \{S[1,\sigma_m] \cap V = \eset\}.
			\end{equation*}
			{Note  that $\Prob(E_m) = \Es(V;m)$. We will interchangeably use the  two notions $\Prob(E_m)$ and $\Es(V;m)$ throughout this section.}   
			We write $\hm_m(z)$ for the harmonic measure of $\p C_m$ for random walk
			starting at the origin, that is,
			\begin{equation*}
				\hm_m(z) = \Prob\{S_{\sigma_{m}} = z\}, \qquad \forall  z \in \p C_m .
			\end{equation*}
			If $V \subset \mathbb{Z}^4$ and $\Prob(E_m) > 0$, we write 
			\begin{equation*}
				\hm_m(z;V) = \Prob\{S_{\sigma_{m}}  = z\mid E_m\} .
			\end{equation*}
			By the strong Markov property, we have
			\[
			\Prob\{S[\sigma_m,\sigma_{m+1}] \cap V \neq \eset\} = \sum_{z \in \p C_m} %
			\hm_m(z) \, \Prob^z\{ S[0,\sigma_{m+1}] \cap V \neq \eset\},\]  
			and
			\begin{eqnarray}
				\Prob(E_{m+1}^c \mid E_m) & = & \Prob\{S[\sigma_m,\sigma_{m+1}] \cap V \neq %
				\eset \mid E_m \} \label{eq:Markov2} \\
				& = &\sum_{z \in \p C_m} \hm_m(z;V) \, \Prob^z\{ S[0,\sigma_{m+1}] \cap V
				\neq \eset\}.\nonumber
			\end{eqnarray}
			
			\begin{proposition}
				\label{prop:3.11} There exists $c < \infty$ such that if $V \in \niceset_n$, 
				$m \geq n/10$, and $\Prob(E_{m+1} \mid E_{m}) \geq 1/2$, then 
				\(\Prob(E_{m+2}^c \mid E_{m+1})\leq c \, \log^2 n/n.\)
			\end{proposition}
			\begin{proof}
				As in \eqref{eq:Markov2}, we write
				\[    \Prob(E_{m+2}^c \mid E_{m+1})
				= \sum_{z \in \partial
					C_{m+1}} \hm_{m+1}(z;V) \, \Prob^z\{S[0,\sigma_{m+2}]
				\cap V \neq \emptyset\}.\]
				Using $\Prob(E_{m+1} \mid E_{m})
				\geq 1/2$, we claim that there exists $c<\infty$ such that  
				\begin{equation}\label{eq:escape}
					\hm_{m+1}(z;V) \leq c \, \hm_{m+1}(z), \qquad \forall\; z\in \partial C_{m+1}.
				\end{equation}
				Indeed,
				we have 
				\begin{eqnarray*}
					\hm_{m+1}(z;V) & = & \frac{\Prob\{S_{\sigma_{m+1}}  = z , E_{m+1}\}}
					{\Prob(E_{m+1})} \\
					&  \leq & 2 \,  \frac{\Prob\{S_{\sigma_{m+1}} = z , E_{m+1}\}}
					{\Prob(E_{m})} \leq 2 \, \Prob\{S_{\sigma_{m+1}} = z \mid E_{m}\},
				\end{eqnarray*}
				and 
				the Harnack inequality
				shows that 
				\[   \Prob\{S_{\sigma_{m+1}} = z \mid E_{m}\}
				\leq \sup_{w \in \p C_m} \Prob^w\{S_{\sigma_{m+1}} = z\}
				\leq c \, \hm_{m+1}(z).\]
				
				Therefore, letting $r_k=\Prob\{S[\sigma_{m+1},\sigma_{m+2}]
				\cap V_k \neq \eset \}$ for $k\in \N$, we have 
				\begin{eqnarray*}
					\Prob(E_{m+2}^c \mid E_{m+1})
					& \leq &   c\,\sum_{z \in \partial C_{m+1}} \hm_{m+1}(z) \, \Prob^z\{S[0,\sigma_{m+2}]
					\cap V \neq \emptyset\} \\
					& =&  c \, \Prob\{S[\sigma_{m+1},\sigma_{m+2}]
					\cap V \neq \eset \}\leq c\, \sum_{k=1}^{m+2} r_k.
				\end{eqnarray*}
				By Definition~\ref{def:slow}, the terms $r_k$ for $k=m,m+1,m+2$ are bounded by
				\begin{eqnarray*}   \Prob\{S[\sigma_{m+1},\sigma_{m+2}]
					\cap (V_m \cup V_{m+1} \cup V_{m+2}) \neq \eset \}
					& \leq & H(V_m \cup V_{m+1} \cup V_{m+2})\\
					&  \leq &
					\frac{c\, \log^2 n}{n}.
				\end{eqnarray*}
				Using \eqref{nov15.1}, we see that for $\lambda$ large enough,
				\[  \sum_{k=1}^{m-\lambda\log m}r_k\le 
				\Prob\{S[\sigma_{m+1},\sigma_{m+2}]
				\cap C_{m- \lambda \, \log m} \neq \eset \}
				\leq  c \,n^{-2}.\]
				For $m-\lambda \log m \leq k \leq m-1$, 
				\eqref{nov15.1.improve} and the definition of $\niceset_n$
				imply that
				\[ r_k\leq c \, e^{-2(m-k)} \, H(V_k) \le  c \, e^{-2(m-k)} \, \frac{\log^2k}{k}.\]
				Summing over $k$ gives the result.
			\end{proof}
			
			\begin{definition}\label{def-slow-set}
				Let $\nicesettwo_n$ denote the set of $V \subset \niceset_n$ such that $\Prob%
				(E_n) \geq 2^{-n/4}$.
			\end{definition}
			
			The particular choice of $2^{-n/4}$ in this Definition~\ref{def-slow-set} is rather
			arbitrary but it is convenient to choose a particular fast decaying
			sequence. For typical sets in $\niceset_n$ one expects that $\Prob(E_n)$
			decays as a power in $n$, so ``most'' sets in $\niceset_n$ with $\Prob(E_n)
			> 0$ will also be in $\nicesettwo_n$.
			
			{
				Recall $\Es(V;n) = \Prob\{\rev{S[1,\sigma_n]} \cap 
				V = \eset\}$, which is decreasing in $n$. 
				We state the next immedate fact as a proposition
				so that we can refer to it.

				\begin{proposition}\label{lem:nice}
					For any $r>0$ and any random subset $V\subset \Z^4$,
					\[
					\E\left[\Es(V;m)^r;{V\notin \nicesettwo_n}\right]\le \P[ V\notin \niceset_n]+2^{-rn/4}.
					\]
					In particular, if $\P[ V\notin \niceset_n]$ is fast decaying
					then so is the left-hand side. 
					
				\end{proposition}
			}
			
			\begin{proposition}
				\label{dec4.prop1} There exists $c < \infty$ such that if $V \in \nicesettwo%
				_n$, then 
				\begin{equation*}
					\Prob(E_{j+1}^c \mid E_{j})\leq \frac{c \, \log^2 n}{n}, \qquad\frac{3n}{4%
					} \leq j \leq n .
				\end{equation*}
			\end{proposition}
			\begin{proof}
				If $\Prob(E_{m+1} \mid E_m) <1/2$ for all $n/4 \leq m
				\leq n/2$, then $\Prob(E_n) <2^{-n/4}$ and
				$V \not\in \nicesettwo_n$.
				Therefore we must have $\Prob(E_{m+1} \mid E_m) \geq 1/2$ for
				some $n/4 \leq m \leq n/2$. Now  (for $n$ sufficiently
				large) we can use Proposition~\ref{prop:3.11} and
				induction to conclude
				that $\Prob(E_{k+1} \mid E_k) \geq 1/2$ for $m \leq k 
				\leq n$.  The result then follows from Proposition~\ref{prop:3.11}.
			\end{proof}

			It follows from Proposition~\ref{dec4.prop1} that there exists $n_0$  such that $\nicesettwo_n \subset \nicesettwo_{n+1}$ for $n \geq n_0$. We
			fix the smallest such $n_0$ and set 
			\begin{equation*}
				\nicesettwo = \bigcup_{j=n_0}^\infty \nicesettwo_j .
			\end{equation*}
			Combining Proposition~\ref{prop:3.11} and~\ref{dec4.prop1},  and the union bound, we have  
			\begin{equation}  \label{jun6.4}
				\Prob(E_{n+k}^c \mid E_n) \leq \frac{c \, k\, \log^2 n}{n}, \qquad  k\in \N, \;V \in %
				\nicesettwo_n.
			\end{equation}

			\begin{proposition}  There exists $c < \infty$
				such that if $V \subset C_n$ and $V  \in \nicesettwo_n$, then
				\begin{equation}  \label{jul19.2}
					\Es[V;n]
					\,\left[1-  \frac{c\, \log^2 n}{n}
					\right]  \leq \Es[V] \leq 
					\Es[V;n].
				\end{equation}
			\end{proposition}
			
			\begin{proof}  The upper bound is trivial.  For
				the lower bound, we first use the previous proposition 
				to see that
				\[  \Es(V;n+1) \geq 
				\Es(V;n)
				\, \left[1  - O(\log^2n/n)\right].\]
				Since $\Es(V)=\Es(V;n+1)(1-\max_{z\in \partial C_{n+1}} H(z,V))$, it suffices to show that there exists
				$c$ such that for all $z \in \partial C_{n+1}$,
				\[     H(z,V) \leq c \, \log^2 n/n.\]
				This can be done by dividing $V$ into $V_j$'s  similarly as  in the proof of 
				Proposition \ref{prop:3.11}, (see the bound for $\sum_1^{m+2}r_j$ there).
			\end{proof}
			
			The next proposition is the key to the analysis of slowly recurrent sets. It
			says that the distribution of the first visit to $\p C_n$ given that one has
			avoided the set $V$ is very close to the unconditioned distribution. We
			would not expect this to be true for recurrent sets that are not slowly
			recurrent.
			
			\begin{proposition} \label{cond}
				There exists $c < \infty$ such that if $V \in \nicesettwo_n$ we have 
				\begin{equation}  \label{jun6.2}
					\hm_n(z;V) \leq \hm_n(z) \, \left[ 1 + \frac{c \, \log^3 n}{n}\right], \qquad \forall \, z \in
					\partial C_n.
				\end{equation}
				Moreover, 
				\begin{equation}  \label{jun6.1}
					\sum_{z \in \partial C_n} |\hm_n(z) -\hm_n(z;V)| \leq \frac{c \, \log^3 n}{n}%
					.
				\end{equation}
			\end{proposition}
			
			\begin{proof}  Let $k = \lfloor \log n \rfloor $.  
				By \eqref{jun6.4}, we have 
				
				\begin{equation}
					\label{jun3.1}     \Prob(E_{n}^c \mid E_{n-k})
					\leq  \frac{c \, \log^3 n}{n}.
				\end{equation}
				Consider a random walk starting   on $ \p
				C_{n-k}$ with the distribution $\hm_{n-k}
				(\cdot;V) $  and let $\nu$ denote
				the distribution of the first visit
				to $\partial C_n$.  In other words, $\nu$ is the
				distribution of the first visit to $\partial
				C_n$ conditioned on the event $E_{n-k}$.
				Using \eqref{harmonic}, we see that
				for $z \in \partial C_n$,
				\[          \nu(z) = \hm_n(z) \, [1 + O(n^{-1})].\]
				By \eqref{jun3.1}, for each $z \in \partial C_n$, we have
				\begin{align*}
					\hm_n(z;V)& =  \Prob\left\{S_{\sigma_n}  =   z \mid E_{n}\right\}  \leq \frac{\Prob(E_{n-k})}{\Prob(E_n)} \, 
					\Prob\left\{S_{\sigma_n}= z \mid E_{n-k}\right\}
					\\ & \leq \frac{\nu(z)}{1-\P(E^c_n|E_{n-k})}  \leq   \hm_n(z) \, \left[1 + O\left(\frac{\log^3n}n \right)
					\right].
				\end{align*}
				Since $\hm_n(\cdot)$ and $\hm_n(\cdot;V)$ are probability measures
				on $\partial C_n$, we have 
				\begin{align*}
					\sum_{z \in \partial C_n}  |\hm_n(z) - \hm_n(z;V) |
					& =  2 
					\sum_{z \in \partial C_n}  [ \hm_n(z;V) - \hm_n(z) ]_+\\
					& \leq \frac{c \, \log^3 n}{n} \sum_{z \in \partial C_n}
					\hm_n(z)  \leq  \frac{c \, \log^3 n}{n}.\qedhere
				\end{align*}
			\end{proof}

			\subsection{Along a subsequence}
			\label{subseqsec}
			In this section we prove Proposition~\ref{nov22.cor5} and~\ref{subsequenceprop}
			via the estimates proved for sets in $\nicesettwo$.
			For $V\in \nicesettwo_{n^{4}}$, let
			\[
			V_{n}^{\ast }:=V\cap \left\{ e^{(n-1)^{4}+4(n-1)}\leq |z|<
			e^{n^{4}-4n}\right\}.
			\]
			\begin{proposition}
				\label{prop.jun6} There exists $c<\infty $ such that 
				{if $V\in \nicesettwo_{n^{4}}$,
					\begin{equation*}
						\Es(V;(n+1)^4)
						= \Es(V;n^4) \,\left[ 1-H(V_{n+1}^{\ast })+O\left( 
						\frac{\log ^{2}n}{n^{3}}\right) \right] .
					\end{equation*}}
				\end{proposition}
				
				
				\begin{proof} 
					Let $\tau_n =\inf\{j: S_j\notin C_{n^4} \}$ and  recall $E_{n} $ and $\hm_n(z;V)$. 
					We observe that $(\Es(V;n)-\Es(V;(n+1)^4))/\Es(V;n^4)$ is bounded by 
					\[
					\P\{S[\tau_n,\tau_{n+1}] \cap V^*_{n+1}\neq \emptyset|E_{n^4}\}+\Prob\{S[\tau_n,\tau_{n+1}] \cap (V \setminus V^*_{n+1})\neq \emptyset|E_{n^4} \}.
					\]	
					To bound the first term, we have 
					\begin{align*}
						&\left |\Prob\{S[\tau_n,\tau_{n+1}] \cap V^*_{n+1}\neq \eset \mid E_{n^4}\}-\Prob\{S[\tau_n,\tau_{n+1}] \cap V^*_{n+1}
						\neq \eset\} \right |\\
						\le&\sum_{z \in \p C_{n^4}}| \hm_{n^4}(z;V)-\hm_{n^4}(z)|  \, \Prob^z\{ S[0,\tau_{n+1}] \cap V^*_{n+1}
						\neq \eset\}\\
						\le & \;\frac{  c\log^3 n}{n^4}\,\sup_{z\in\p C_{n^4} } \Prob^z\{ S[0,\tau_{n+1}] \cap V^*_{n+1} \neq \eset\}   \\
						\le  &  \;\frac{  c\log^3 n}{n^4}\,\Prob\{ S[\tau_n,\tau_{n+1}] \cap V^*_{n+1} \neq \eset\},
					\end{align*}
					where the three inequalities are due to the strong Markov property, \eqref{jun6.2} and Harnack inequality respectively.
					
					Using \eqref{harmonic}, we have \( \Prob\{S[\tau_n,\tau_{n+1}] \cap V^*_{n+1}
					\neq \eset\} = H(V^*_{n+1})\, {[1+O(e^{-4n})]}.\)
					Hence, it suffices to prove that
					\[ \Prob\{S[\tau_n,\tau_{n+1}] \cap (V \setminus V^*_{n+1})
					\neq \eset  \mid E_{n^4}\} = O\left( \frac{  \log^2 n}{n^3}\right),\]
					which by the strong Markov property and  \eqref{jun6.2}, can be further reduced to showing that
					\[  \Prob\{S[\tau_n,\tau_{n+1}] \cap (V \setminus V^*_{n+1})
					\neq \eset  \} = O\left( \frac{  \log^2 n}{n^3}\right).\]
					Note that $V \setminus V^*_{n+1}$ is contained in the
					union of $C_{n^4-4n}$ and    $O(n)$  sets of the form $V_m$
					with $m \geq n^4 - 4n$.
					By Definition~\ref{def:slow} and the union bound,
					\[  H\left((V \setminus V^*_{n+1} )
					\cap \left\{e^{n^4 - 4n} \leq |z| \leq e^{(n+1)^4} \right\}\right)
					= O\left(\frac{\log^2 n}{n^3}\right). \]
					By Lemma \ref{standardlemma},  \( \Prob\left\{S[\tau_n , \tau_{n+1}] \cap   C_{n^4 - 4n}\neq \eset \right\} =  O(e^{-8n})$.
					This concludes the proof. \end{proof}

					\begin{corollary}
					\label{cor.jun22} If $V \in \nicesettwo_{n^4}$, $m \geq n$, and $m^4 \leq k
					\le (m+1)^4$, then 
					\begin{equation*}
					\Es(V;k)  = \Es(V;n^4)\, \exp\left\{- \sum_{j=n+1}^m H(V_j^*)
					\right\} \, \left[ 1 + O\left(\frac{\log^4 n}{n } \right) \right].
					\end{equation*}
					\end{corollary}
					
					\begin{proof}
					
					By Proposition~\ref{prop.jun6}, if $m>n$ we have 
					\begin{eqnarray*}
					\frac{\Prob(E_{m^4})}{\Prob(E_{n^4})}
					& = &  
					\, \prod_{j={n+1}}^m \left[1 - H(V_j^*) 
					+ O\left(\frac{\log^2 j}{j^3}\right) \right].
					\end{eqnarray*}
					By Definition~\ref{def:slow} and the union bound,  we have  $H(V_j^*) = O(\log^2 j/j)$. Hence 
					\begin{eqnarray*}
					\frac{\Prob(E_{m^4})}{\Prob(E_{n^4})}
					& = &  
					\prod_{j={n+1}}^m  \left[ e^{-H(V_j^*)}  + O
					\left(\frac{\log^4 j}{j^2}\right) \right]
					\\ & = & \left[1 + O\left(\frac{\log^4 n}{n}\right) \right]
					\,\exp\left\{-\sum_{j=n+1}^m   H(V_j^*)  \right\}.
					\\
					\end{eqnarray*}
					On the other hand, \eqref{jun6.4} implies that  \(  \Prob(E_k) = \Prob(E_{m^4}) \, \left[1 -
					O \left(\log^2 m/m \right) \right] \) for $m^4 \leq k \leq (m+1)^4$. This concludes the proof.
					\end{proof}
					Now we apply our theory to LERW.  Recall the setup in Section~\ref{subsec:outline}. 
					\begin{proof}[Proof of Proposition~\ref{nov22.cor5}]  
					Let $k =\lceil \log^2 n \rceil$.  We will show the
					stronger result that
					\begin{equation}\label{eq:pn}
					p_n = \E\left[\Es(\Gamma_n)^r\right] = p_{n-k} \, \left[ 1 + O(\log^4 n/n)\right], 
					\end{equation}
					ad similarly for $p_{n+1}$.   Since  $p_n$ decays polynomially, by  Proposition~\ref{lem:nice},
					\begin{equation}\label{eq:pn-nice}
					\E\left[\Es(\Gamma_n)^r\right] = \E\left[\Es(\Gamma_n)^r1_{\Gamma_n\in \nicesettwo_n}\right] (1+\eps_n),
					\end{equation}where $\eps_n$ fast decaying.
					By \eqref{jul19.2} 
					we have
					\begin{equation}\label{eq:pn-trunk}
					\E[\Es(\Gamma_n)^r]= \E[\Es(\Gamma_n;n)^r] \, [1 + O(\log^2n/n)].
					\end{equation}
					By Lemma \ref{nov21.lemma1}, except for 
					an event of fast decaying probability,
					\begin{equation}\label{eq:good-event}
					\Gamma \cap C_{n-k}
					\subset \Gamma_n
					\subset V
					\end{equation}where $V:=(\Gamma \cap
					C_{n-k}) \cup  \left[S[0,\infty) \setminus C_{n-k}\right].$
					If \eqref{eq:good-event} occurs, then  we have 
					\[     \Es[\Gamma; n - k]=\Es[V;n-k] 
					\geq \Es[\Gamma_n;n] \geq \Es[\Gamma_n;n+k]  \geq \Es
					[V;n+k]\]
					{By  \eqref{jun6.4},  we have 
						\[   \E\left[\Es[V;n +k]^r ;{V\in \nicesettwo_{n-k}} \right] 
						\hspace{1.5in}\]
						\[ = \E[\Es[V;n-k]^r;{V\in \nicesettwo_{n-k}}]
						\,  \left [1 - O\left({\log^4 n}/{n}
						\right)\right], \]
						Since $\E\left[\Es[V;n +k]^r\right]$ decays like a power of $n$, by  Proposition~\ref{lem:nice} },
					
					\[   \E\left[\Es[V;n +k]^r \right] = \E[\Es[V;n-k]^r]
					\,  \left [1 - O\left({\log^4 n}/{n}
					\right)\right]. \]
					Now \eqref{eq:pn} follows from \eqref{eq:pn-trunk} and 
					\[ \E[\Es(\Gamma_n;n)^r]\ge   \E\left[\Es[V;n +k]^r \right] = 
					p_{n-k} \,  \left [1 - O\left({\log^4 n}/{n}
					\right)\right].\]
					A similar argument gives $p_{n+1}=p_{n-k} \,  \left [1 - O\left({\log^4 n}/{n}\right)\right]$.
					\end{proof}

					A useful corollary  of the proof is
					\begin{equation}  \label{jul19.4}
					\E[\Es(\Gamma;n)^r] = p_n
					\, \left[1 + O(\log^4n/n)\right].   
					\end{equation}

					\begin{proof}[Proof of Proposition~\ref{subsequenceprop} ]
					Let $Q_n = \Es[\Gamma;n^4]$ and 
					\[
					\Gamma_n^* = \Gamma \cap A((n-1)^4 + 4(n-1), n^4 - 4n) .
					\] 	
					Then, by Proposotion~\ref{prop.jun6}, if $\Gamma \in \nicesettwo_{n^4}$, we have 
					\[          Q_{n+1} =   Q_n \, \left[1  - H(\Gamma_{n+1}^*) +
					O\left( \frac{\log^2 n}{n^3}\right) \right] .\]	
					
				{Applying Proposition~\ref{lem:nice} to $V=\Gamma$, we have} 
				\[
				\E\left [  Q_{n+1}^r\right]= \E\left [ Q_n^r \, [1  - r\, H( \Gamma_{n+1}^*)]\right] + \E\left[  Q_n^r\right] \, O\left( \frac{\log^2 n}{n^3}\right).
				\]
				Recall $\wt I(\cdot, \cdot)$ in \eqref{jun7.1}. 
				We see that $\Prob\{\Gamma_{n+1}^* \neq \tilde \Gamma_{n+1}\mid \F_{n^4}\} $ is bounded by
				\begin{align*}
				\P\{\wt I[n^4+n, n^4+4n] |\F_{n^4} \}  +\P\{\wt I[(n+1)^4-4(n+1), (n+1)^4-(n+1)] |\F_{n^4} \},
				\end{align*}
				which by \eqref{jun7.1} is further bounded by  $O(n^{-4})$. Therefore, 
				\[  \E\left[Q_n^r \,|H(\Gamma_{n+1}^*) -  H(\tilde \Gamma_{n+1})|
				\right] \leq  O(n^{-4}) \, \E\left[Q_n^r\right].
				\] 
				Hence,\begin{equation}\label{eq:replace}
				\E\left [ Q_{n+1}^r\right]
				= \E\left [ Q_n^r \, [1  - r\, H(\tilde \Gamma_{n+1})]\right 
				] + \E\left[  Q_n^r\right] \, O\left( \frac{\log^2 n}{n^3}\right).
				\end{equation}
				Using \eqref{harmonic} in Lemma \ref{standardlemma}, we can see that
				\begin{equation}\label{eq:harnack1}
				\E[H(\tilde \Gamma_{n+1}) \mid \F_{n^4}]=\E[H(\tilde \Gamma_{n+1})] \, [1 + o(e^{-4n})] 
				=\tilde h_{n+1}\, [1 + o(e^{-4n})  ]. 
				\end{equation}
				Let  $q_n:=p_{n^4}$. Combining \eqref {jul19.4}, \eqref{eq:replace} and~\eqref{eq:harnack1}, we get
				\begin{eqnarray*}
				q_{n+1} & = & q_n \, \left[1 - r\,  \tilde h_{n+1} +
				O\left( \frac{\log^2 n}{n^3}\right) \right]\\
				& = & q_n\,\exp\left\{-r\, \tilde h_{n+1}\right\}
				\left[1 +O\left(\frac{1}{n^2} \right) \right],
				\end{eqnarray*}
				where the last inequality uses $\tilde h_{n+1} = O(n^{-1})$.
				In particular, if $m > n$, 
				\[  q_m  =  q_n \,  \left[1 +O\left(\frac{1}{n} \right)
				\right]\exp\left\{-r\sum_{j=n+1}^m \tilde h_j\right\}  .\]	
				from which Proposition~\ref{subsequenceprop}  follows.
				\end{proof}
				By Proposition~\ref{nov22.cor5}, we see that 
				\begin{equation*}
				p_m = p_{n^4} \, \left[1 + O\left({\log^4 n/n} \right) \right] \qquad \textrm{if }n^4 \leq m \leq (n+1)^4.
				\end{equation*}
				Combined with Proposition~\ref{subsequenceprop}, we immediately get 
				\begin{corollary}
				\label{coro:sub} There exists $c_0 < \infty$ such that as $m \rightarrow
				\infty$, 
				\begin{equation*}
				p_m = \left[c_0 + O\left(\frac{\log^4 m }{m^{1/4}}\right) \right] \,
				\exp\left\{-r\sum_{j=1}^{\lfloor m^{1/4} \rfloor } \tilde h_j\right\}.
				\end{equation*}
				\end{corollary}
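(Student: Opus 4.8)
The plan is to combine the subsequential asymptotics of Proposition~\ref{subsequenceprop} with the near-monotonicity estimate of Corollary~\ref{nov22.cor5}; together these already contain everything needed, and the only work is bookkeeping of error terms and translating from the subsequence index $n$ (running over $\{n^4\}$) to a general index $m$.

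First I would set $n = \lfloor m^{1/4}\rfloor$, so that $n^4 \leq m < (n+1)^4$. By the second assertion of Corollary~\ref{nov22.cor5}, $p_m = p_{n^4}\,\left[1 + O\!\left(\frac{\log^4 n}{n}\right)\right]$, and by definition $p_{n^4} = q_n$. Proposition~\ref{subsequenceprop} gives $q_n = \left[c_0 + O\!\left(\frac1n\right)\right]\exp\left\{-r\sum_{j=1}^n \tilde h_j\right\}$. Multiplying the two expansions and noting that $c_0$ is a fixed finite constant and $1/n = O(\log^4 n/n)$, so that $\left[c_0 + O(1/n)\right]\left[1 + O(\log^4 n/n)\right] = c_0 + O(\log^4 n/n)$, yields $p_m = \left[c_0 + O\!\left(\frac{\log^4 n}{n}\right)\right]\exp\left\{-r\sum_{j=1}^n \tilde h_j\right\}$.

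It then remains only to rewrite the error term and the upper limit of the sum in terms of $m$. Since $n = \lfloor m^{1/4}\rfloor \geq \tfrac12 m^{1/4}$ for $m$ large and $\log n \leq \log m$, we have $\log^4 n / n = O(\log^4 m / m^{1/4})$, and the sum runs up to $n = \lfloor m^{1/4}\rfloor$ by construction. This is exactly the claimed formula.

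There is no genuine obstacle here; the substantive content lives entirely in Proposition~\ref{subsequenceprop} and Corollary~\ref{nov22.cor5}. The one point that deserves an explicit line is the verification that the multiplicative error terms combine as asserted, which I have indicated above; everything else is a routine change of index.
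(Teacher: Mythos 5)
Your proof is correct and is exactly the argument the paper has in mind: the paper's own proof of this corollary is the one-line remark that it ``follows immediately from the previous proposition and Corollary~\ref{nov22.cor5},'' and you have simply spelled out the bookkeeping.
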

				
				
				\section{From the subsequential limit to the full limit}\label{sec:full}
				In  this section we prove Proposition~\ref{nov22.prop3} and~\ref{prop:rs}.
				The proof  of  Proposition~\ref{nov22.prop3} is the technical bulk of this section, which will be given in Section~\ref{comparesec}.
				Let us first conclude the proof of Proposition~\ref{prop:rs} assuming Proposition~\ref{nov22.prop3}.
				\begin{proof}[Proof of Proposition~\ref{prop:rs}]
				Given Proposition~\ref{nov22.prop3}, the $s=0$ case follows from Corollary~\ref{coro:sub}.
				For the general case, recall that \xin{$1\le G_n\le 8$} and $G_n$ converge to $G_\infty$ almost surely. 	Moreover Lemma \ref{nov21.lemma1} implies
				that there exists a fast decaying sequence $\{\epsilon_n\}$
				such that if $m \geq n$, $\P\{|G_n - G_m| \geq \epsilon_n\} \leq \epsilon_n$.
				Therefore \( \E \left[  \phi_n^{-r}
				\, Z_n^r \, G_n^{-s}\right] - \E\left[  \phi_n^{-r}\, Z_n^r \, G_\infty
				^{-s} \right]\) is fast decaying and
				\[     \left|\E\left(\phi^{-r}_n \, Z_n^r \, G_\infty^{-s}\right) 
				- \E\left(\phi_m^{-r}
				Z_m^r \, G_\infty^{-s}\right)  \right| \leq c
				\,   \left|\E\left(  \phi_n^{-r} \, Z_n^r \right) 
				- \E\left(  \phi_m^{-r}
				Z_m^r \right)  \right|.\]
				Take $m\to \infty$, we see that the $s\neq 0$ case follows from the $s=0$ case.
				\end{proof}

				\subsection{Harmonic measure of the range of SRW}\label{sec:deviation}
				We start by proving two  estimates for the harmonic measure of the range of random walk.

				\begin{lemma}
				\label{nov18.lemma1} Let 
				\begin{equation*}
				\sigma_n^- = \sigma_n - \lfloor n^{-1/4} \, e^{2n} \rfloor, \;\;\;\;
				\sigma_n^+ = \sigma_n + \lfloor n^{-1/4} \, e^{2n} \rfloor,
				\end{equation*}
				\begin{equation*}
				n' = \lceil n + n^{4/5} \rceil, \;\;\;\;	\smallbit_n^- = S[0,{\sigma_n^-}],\;\;\;\; \smallbit_n^+ =S[\sigma_n^+,\sigma_{n'}],
				\end{equation*}
				\begin{equation*}
				R_n = \max_{x \in \smallbit_n^-} H(x,\smallbit^+_n) + \max_{y \in %
					\smallbit_n^+} H(y, \smallbit_n^-),
				\end{equation*}
				Then, for all $n$ sufficiently large, 
				\begin{equation}  \label{dec1.5}
				\Prob\{R_n \geq n^{-1/6} \} \leq n^{-1/3} .
				\end{equation}
				\end{lemma}
				Our proof will actually give a stronger estimate, but \eqref{dec1.5} is all
				that we need and makes for a somewhat cleaner statement.

				\begin{proof}   
				{
					Let $m = n+ \lceil  \log n \rceil$.  
					Recall from \eqref{jul18.1} that there exists
					$c_0 < \infty$ such that 
					\[
					\Prob\{\sigma_{n} \geq c_0 \, e^{2n} \,  \log n \} \leq n^{-1} \qquad\textrm{and}\qquad \Prob\{\sigma_{m} \geq c_0 \, e^{2n} \,  n^2
					\, \log n  \} \leq n^{-1}.
					\]Let $V = V_n = 
					S[\sigma_n^+,\sigma_m]$,
					\[   \tilde  R_{n} = 
					\max_{x \in \smallbit_n^-} H(x, V) + \max_{y \in 
						V} H(y, \smallbit^-_n),\]
					\[    R_n^* = 
					\max_{x \in \smallbit_n^-} H(x, \smallbit^+_n
					\setminus V) + \max_{y \in 
						\smallbit_n^+ \setminus V} H(y, \smallbit_n^-),\]
					and note that $R_n  \leq  \tilde R_n + R ^*_n$.
					We claim that for $n$ sufficiently large,
					\begin{equation}  \label{jul20.1}
					\Prob\{R^*_n \geq n^{-1/6}/2\} \leq  O(n^{-1}).
					\end{equation}
					To see this,  let $\ell = \lfloor n +[ (\log n)/2] \rfloor,$
					and let $U$ denote the event 
					$U = \{(S_n^+ \setminus V) \cap C_{ \ell}
					= \eset \}.$  By \eqref{nov15.1}, $\Prob(U) \geq
					1 - O(n^{-1})$,  and on the event
					$U$ we have $\smallbit_n^- \subset C_n
					\subset C_{\ell} \subset (S_n^+ \setminus
					V)^c$.  
					Therefore, on the event $U \cap
					\{S[0,\infty) \in \niceset_n\}$, 
					for $y \in \smallbit_n^-$, $x \in S_n^+ \setminus V$, we have the following two bouds:
					\begin{align*}
					H(y,S_n^+ \setminus V ) &\leq c \, H( S[0,\infty) \cap A(n+1,n + n^{4/5})\} \leq c \, n^{-1/5} \, \log^2 n;\\ 
					H(x,\smallbit_n^-) &\leq H(x,C_n) \leq c/n.
					\end{align*}
					This gives \eqref{jul20.1}.

					Let $N = N_n =  \lceil  c_0 \, e^{2n} \, \log n
					\rceil, M = M_n =\lceil  c_0 \, e^{2n} \, n^2\, \log n
					\rceil $  and $k = k_n = \lfloor n^{-1/4} \, e^{2n}/4\rfloor$.  		
					For each integer $0\le j \le N/(k+1)$, let
					$E_j = E_{j,n}$ be event that at least one of the following holds:
					\begin{equation}\label{eq:max}
					\max_{0\le i \leq jk} H(S_i, S[(j+1)k,M]) \ge \frac{ \log n}{n^{1/4}},
					\end{equation}
					\begin{equation}\label{eq:maxtwo}
					\max_{(j+1)k \leq i \leq M} H(S_i, S[0,jk]) \ge \frac{ \log n}{n^{1/4}}.
					\end{equation}
					Then for large enough $n$, we have 
					$\{\tilde R_n\ge n^{-1/3},\;  jk \le \sigma_n\le (j+1)k\}\subset  E_j.$
					
					Recall the notion $Y_{n,\alpha}$ in Corollary~\ref{cor:har}. For fixed $j$, using
					the reversibility of simple random walk, the probabilities of both the event in  \eqref{eq:max} and in \eqref{eq:maxtwo} are bounded by 
					$\P [Y_{M,N/k}  \ge n^{-1/4} \log n]= O(n^{-3/4})$. 
					Therefore $\P (E_j) \le  O(n^{-3/4})$ and hence
					\[    \Prob  \{ \tilde R_n\ge n^{-1/3}\}\le O(n^{-1}) +\frac{N}{k}  O(n^{-3/4})\le O\left(\frac{\log n}{n^{1/2} } \right).\]
					Combining this with \eqref{jul20.1} gives the proof.}
				\end{proof}

				%
				%
				%
				%
				
				The next lemma will use the notion of capacity $\Cp(V)$ for  a subset $V \subset \mathbb{%
					Z}^4$. We will not review the definition but only recall three  key facts:
				\begin{align}
				&\Cp(V\cup V')\le \Cp(V) +\Cp(V') \qquad &&V, V'\subset \Z^4;\label{eq:subadd}\\
				& \Cp( \{z+v: v\in V \}   )=\Cp(V)  \qquad&& V\subset \Z^4, \; z\in \Z^4;\label{eq:shift}\\
				& \Cp(V)   \asymp |z|^2 H(z,V)\qquad &&  V \subset C_n,\;z \not\in C_{n+1} \label{jun9.1}.
				\end{align}
				See   \cite[Section 6.5, in particular, Proposition 6.5.1]{lawler-limic} for definitions and
				properties. {Combining these
					with the estimates for hitting probabilities, we have 
					\[
					\E[\Cp \left(S[0,n^2]\right)] \asymp n^2/\log n.
					\]
					By Markov inequality, there exists $\delta > 0$ such that
					\begin{equation}\label{eq:cap}
					\Prob\left \{\Cp \left(S[0,n^2]\right)
					\leq \frac{  n^2}{\delta \, \log n} \right\} \geq \delta.
					\end{equation}
					By  iterating \eqref{eq:cap} and using the strong Markov property and subadditivity as in the proof of
					Proposition \ref{nov15.prop1}, we see that there exists $c,\beta$
					such that for all $n$ and all $a > 0$,
					\begin{equation}  \label{capest}
					\Prob\left \{\Cp \left(S[0,n^2]\right) \geq \frac{a n^2}{\log n} \right\} \leq c\, e^{-\beta a}.
					\end{equation}
				}
				
				\begin{lemma}
				\label{jun7.lemma} For all $j,m\in \N$, let \(L[j,m]=\Cp\left( S[j,j+m]\right)\). For $k,n\in \N$, let \(\bar{L}
				(n;k)=\max_{j\leq n}L[j,k]\).
				Then for every $u<\infty $ 
				\begin{equation}
				\Prob\left\{ \bar{L}(n^{u}\,e^{2n};n^{-1/4}\,e^{2n})\geq
				2\,n^{-11/10}\,e^{2n}\right\} \qquad\textrm{is fast decaying}.  \label{jun7.6}
				\end{equation}%
				
				\end{lemma}
				
				\begin{proof}
				Let $k = \lceil n^{-1/4} \, e^{2n} \rceil .$ 
				Let $U$ denote the 
				event in \eqref{jun7.6}.
				By the subadditivity of capacity \eqref{eq:subadd} and the union bound, we have
				\[   U \subset   \bigcup_{i=1}^{n^{u+1}}  
				\left\{L[ik, k] \geq n^{-11/10}\, e^{2n}  \right\}. \]
				By \eqref{eq:shift}, the events in the union
				are identically distributed. {Therefore
					\[   \Prob(U) \leq 
					n^{u+1} \,\Prob\left\{ L[0;k] \geq n^{3/20} \, \frac{e^{2n}}{n^{1/4}
						\, n } \right\}, \]
					which is fast decaying by \eqref{capest}.}
				%
				%
				\end{proof}
				\subsection{Proof of Proposition~\ref{nov22.prop3}}
				\label{comparesec}
				The  strategy is to find a $u > 0$, and for
				each $n$ 
				a  random set  $U = U(n)
				\subset \Z^4$ that can be written as a disjoint union 
				\begin{equation}\label{eq:U}
				U =  \bigcup_{j=(n-1)^4+ 1}^{n^4} U_j
				\end{equation}
				such that the following four conditions hold where
				\begin{align}
				&U \subset \tilde \Gamma_n;\label {nov18.1} \\
				&	U_j \subset \eta^j, \qquad  j=(n-1)^4+1,\ldots n^4;  \label {nov18.2}\\
				&\E\left[ H(\tilde \Gamma_n \setminus U)\right]+
				\sum_{(n-1)^4<j\le n^4} \E\left[ H ( \eta^j \setminus U_j)\right] \leq O(n^{-(1+u)});	 \label {nov18.3} \\
				&		\max_{{(n-1)^4 < j   \leq n^4}}\;  \max_{x \in U_j}\; 
				H(x,U \setminus U_j) \leq  n^{-u} .  \label {nov18.4}
				\end{align}
				We will first show  that finding such a set gives
				the result.   
				Taking expectations and using \eqref{nov18.1}--\eqref{nov18.3},    we
				get
				\begin{align*}
				\E\left[H(\tilde \Gamma_n)\right]
				&= O(n^{-(1+u)}) + \E\left[H(U)\right]; \\
				\sum_{(n-1)^4<j\le n^4} \E[H(U_j) ]
				&=  O(n^{-(1+u)}) + \sum_{(n-1)^4<j\le n^4}\E\left[H(\eta^j)
				\right].
				\end{align*}
				
				{Let $\tilde S$ be a simple random walk  (independent of $U$)  starting
					at the origin and let $J_n$ be the number of integers
					$j$ with $(n-1)^4 < j \leq n^4$ and such that
					$\tilde S[0,\infty) \cap U_j \neq \eset$.
					Let $\tilde{\P}$ and $\tilde \E$ are the probability and 
					expectation over $\tilde S$
					with $U$ fixed. Since the  $U_j$ are disjoint, \eqref{nov18.4}
					and the strong Markov property imply for $k \geq 1$,
					\[   \tilde{\Prob}\{J_n \geq k + 1 \mid J_n \geq k\} \leq n^{-u}.\]
					{Therefore, $\tilde{\E} [J_n] \le \tilde{\P}[J_n\ge 1]\left[1+O(n^{-u})\right]$.
						
						Since $ \tilde{\E}[J_n]=\sum_{(n-1)^4 < j \leq n^4} H(U_j)$ and  $H(U)=\tilde{\P}[J_n\ge 1]$, 
						we have 
						\[  H(U)   \geq      \left[1 - O(n^{-u})\right]   \sum_{(n-1)^4 < j \leq n^4} H(U_j).
						\]}
					Taking expectations over $U$  and using $\E[\tilde \Gamma_n]
					\leq  O(n^{-1})$, we get
					\begin{align*}
					\Big |   	\E[H(U)] - \sum_{(n-1)^4<j\le n^4}\E[H(U_j) ]   \Big | \le O(n^{-1-u}).
					\end{align*} 
					Therefore it remains to find the sets $U$ and $U_j$'s satisfying \eqref{nov18.1}-\eqref{nov18.4}.}

				Let $\sigma_j^{\pm} = \sigma_j \pm \lfloor j^{-1/4} e^{2j}\rfloor$ as in Lemma~\ref{nov18.lemma1}
				and  $\tilde \omega_j = S[\sigma_{j-1}^+,\sigma_j^-]$.
				We will let $U$ be defined as in \eqref{eq:U}, where  $U_j=\eta^j \cap \tilde \omega_j$
				unless one of the following six events occurs in which case
				we set $U_j = \eset.$  (We assume $(n-1)^4 <j \leq n^4$.)
				
				\begin{enumerate}
				\item If $j \leq (n-1)^4 + 8n$ or $j \geq n^4 - 8n$.		
				\item  If $H(\omega_j) \geq j^{-1}\, \log^2j $. 
				\item If $\omega_j \cap  C_{j -  8 \log n} \neq \eset$.
				\item  If $H(\omega_j \setminus \tilde \omega_j)\geq j^{-1-u} $.
				\item   If  it is not true that there  exist loop-free points in both $[\sigma_{j-1},\sigma_{j-1}^+]$ and $[\sigma_j^-,\sigma_j]$.
				\item    If 
				{$\sup_{x \in \tilde \omega_j} H(x,S[0, \sigma_{n^4}]  \setminus \omega_j ) \geq j^{-1/6}.$}
				\end{enumerate}
				{	We need to show that  \eqref{nov18.1}--\eqref{nov18.4} hold  for some $u>0$.}
				
				Throughout this proof we assume $n$ is large enough.	
				The definition of $U_j$ immediately implies
				\eqref{nov18.2}.
				Combining conditions 1 and 3, we see that  $U_j \subset A((n-1)^4
				+ 6n, n^4 - 6n)$.  Moreover, if there exists loop-free points
				in  $[\sigma_{j-1},\sigma_{j-1}^+]$ and $[\sigma_j^-,\sigma_j]$,
				then $\tilde \eta_n \cap \tilde \omega_j = \eta_j \cap 
				\tilde \omega_j$.  Therefore, the $U_j$ are disjoint and ~\eqref{nov18.1} holds.
				{Also, condition 6 immediately yields that \eqref{nov18.4} holds for $u\le 1/6$.}
				
				In order to establish \eqref{nov18.3} we first
				note that 
				\[        (\tilde \Gamma_n \cup  \eta^{(n-1)^4 + 1}
				\cup \cdots \cup \eta ^{n^4}
				) \setminus U
				\subset \bigcup_{(n-1)^4 <j\le n^4} V_j, \]
				where
				\[     V_j  = \left\{\begin{array}{ll} \omega_j & \mbox{ if } U_j =
				\eset \\
				\omega_j \setminus \tilde \omega_j & \mbox{ if } U_j = 
				\eta^j \cap \tilde \omega_j \end{array} . \right. \]
				Hence, it suffices to find $0<u\le 1/3$ such that
				\[   \sum_{(n-1)^4 < j \le n^4  } \left( \E\left[H(\omega_j);U_j = \eset\right] + 
				\E\left[H(\omega_j \setminus \tilde{\omega}_j)
				\right]\right) \leq c \, n^{-1-u}.\]
				To estimate $ \E\left[H(\omega_j \setminus \tilde{\omega}_j)
				\right]$, we use  \eqref{jun9.1}, and Lemma  \ref{jun7.lemma}  to see that except for an event of fast decaying
				probability
				\begin{equation}  \label{dec1.1}
				H(\omega_j \setminus \tilde \omega_j) \leq O(j^{-11/
					{10}}),
				\end{equation}
				and hence $\E[H(\omega_j \setminus \tilde \omega_j)]  \leq O(j^{-11/
					{10}})
				$ and 
				\[ \sum_{(n-1)^4 < j \le  n^4  }  \E\left[H(\omega_j \setminus \tilde{\omega}_j)
				\right] \leq c \sum_{(n-1)^4 < j \le  n^4  } j^{-11/10}
				\leq  c \, n^{-\frac 75}.\]
				
				For $i=1,2,3,4,5,6$, let $E_j^i$ be the event that the $i$th condition in the definition of $U_j$ holds
				but none of the previous ones hold. Since \(\{U_j = \eset\} = E_j^1 \cup \cdots \cup E_j^6\), to estimate $\E\left[H(\omega_j);U_j = \eset\right]$, we just need to estimate 
				$ \sum_{(n-1)^4 < j \le n^4  }\E\left[H(\omega_j);E_j^i \right]$ case by case. 
				\begin{enumerate}

				\item  Since $\E[H(\omega_j)]  \asymp j^{-1}$ for each $j$, we have 
				\[  \sum_{(n-1)^4 < j \le n^4  }
				\; \E[H(\omega_j);E_j^1] = O(n^{-3}). \]
				
				\item By Proposition~\ref{nov15.prop1}, $\Prob\left\{H(\omega_j) \geq j^{-1}\log^2 j \right\}$
				is fast decaying in $j$. This takes care of $ \sum_{(n-1)^4 < j \le n^4  }\E[H(\omega_j);E_j^2]$.
				
				\end{enumerate}
				
				On the event $E_j^3 \cup \cdots \cup E_j^6$, 
				we have $H(\omega_j) < j^{-1} \, \log^2 j . $
				Hence,
				\[  \E\left[H(\omega_j) \, ;\, {E_j^3 \cup \cdots \cup E_j^6} \right]
				\leq \frac{\log^2j}{j } \, \Prob(E_j^3 \cup \cdots \cup E_j^6).\]
				In particular, 
				it suffices 
				to prove that  there exists $u>0$ such that 
				\[
				\textrm{ 	$\Prob(E_j^i) \leq j^{-u}$	for $i=3,4,5,6$ and   $(n-1)^4+8n<  j < n^4-8n$.}
				\]
				\begin{enumerate}		\setcounter{enumi}{2}
				\item  \eqref{nov15.1} in Lemma~\ref{standardlemma} gives \(（\P(E^3_j)\le \Prob\{\omega_j \cap C_{j- \log j} \neq \eset\}\leq O(j^{-2}).\)
				\item  The bound on $\P(E^4_j)$ is already done in \eqref{dec1.1}.
				\item  Let $I = I_{\delta,n}$ be as in  \eqref{jun7.4}
				substituting in  $n = e^{2j} \, j^{1/16}$ and $\delta =
				j^{-7/16}$ so that $\delta n = e^{2j} \, j^{-3/8}$.  
				Using \eqref{jun7.4},
				we have  $\Prob(I) = o(j^{-1/4})$.   Note that the event that
				there is no loop-free time in either $[\sigma_{j-1},\sigma_{j-1}^+]$ or   $[\sigma_j^-,\sigma_j]$ is contained
				in the union of $I$ and the two events:
				\[        \{ \sigma_{j+1} \geq e^{2j} \, j^{1/16} \}\qquad\textrm{and}\qquad   \left\{ S[e^{2j} \, j^{1/16},\infty ) 
				\cap S[0,\sigma_{j+1}] \neq \eset \right\}. \]
				The probability of the first event  is fast decaying by~\eqref{jul18.1} and the probability
				of the second is $o(1/j)$ by~\eqref{eq:srw-long-range}.   Hence
				$\P(E^5_j)=o(j^{-1/4})$.		
				\item By Lemma \ref{nov18.lemma1},  for large enough $n$, we have $\P(E^6_j)\le  j^{-1/3}$. \qedhere
				\end{enumerate}
				
				\section{Exact relation}\label{constantsec}
				In this section we first prove the elemetary lemma promised at the end of Section~\ref{subsec:outline}.
				Then we give the asymptotics of the long-range intersection probability of SRW and LERW in terms of $\hat G^2_n$ defined in Section~\ref{sec;Green}, which concludes our proof of Theorem~\ref{maintheoremjune}.
				
				\subsection{A lemma about a sequence}\label{exactsec}
				\begin{lemma}
				\label{nov21.lemma2} Suppose $\beta > 0$, $p_1,p_2,\ldots$ is a sequence of
				positive numbers with $p_{n+1}/p_n \rightarrow 1$, and 
				\begin{equation*}
				\lim_{n \rightarrow \infty} \left[\log p_n + \beta\sum_{j=1}^n p_j\right]
				\end{equation*}
				exists and is finite. Then 
				\begin{equation*}
				\lim_{n \rightarrow \infty} n \, p_n = 1/\beta .
				\end{equation*}
				\end{lemma}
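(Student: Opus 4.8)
\textbf{Proof proposal for Lemma \ref{nov21.lemma2}.}

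The plan is to turn the hypothesis into information about the growth of the partial sums $P_n := \sum_{j=1}^n p_j$ and then bootstrap. Set $a_n = \log p_n + \beta P_n$; by assumption $a_n \to a_\infty \in \mathbb{R}$, so in particular $\{a_n\}$ is bounded, which forces $\log p_n = a_n - \beta P_n \to -\infty$ unless $P_n$ stays bounded; but $p_n \to 0$ is already implied (from $p_{n+1}/p_n \to 1$ together with, say, the fact that $\log p_n \le a_\infty + o(1) - \beta P_n$ and $P_n$ is nondecreasing — if $P_n$ were bounded then $p_n \not\to 0$ is still possible, so one argues instead directly: were $P_\infty < \infty$ then $p_n \to 0$ and $\log p_n \to -\infty$, contradicting boundedness of $a_n$). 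Hence $P_n \to \infty$. Now from $a_n \to a_\infty$ we get
\begin{equation*}
\log p_n = a_\infty - \beta P_n + o(1), \qquad \text{i.e.} \qquad p_n = e^{a_\infty + o(1)} \, e^{-\beta P_n}.
\end{equation*}

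The key step is to analyze the recursion $P_n - P_{n-1} = p_n = e^{a_\infty + o(1)} e^{-\beta P_n}$, or equivalently to compare $P_n$ to the solution of the ODE $P' = C e^{-\beta P}$, whose solution satisfies $e^{\beta P(t)} = \beta C t + \text{const}$, so $P(t) \sim \beta^{-1}\log t$. Concretely I would look at the increments of $e^{\beta P_n}$:
\begin{equation*}
e^{\beta P_n} - e^{\beta P_{n-1}} = e^{\beta P_{n-1}}\left(e^{\beta p_n} - 1\right) = e^{\beta P_{n-1}}\left(\beta p_n + O(p_n^2)\right) = \beta\, e^{\beta P_{n-1}} p_n \,(1 + o(1)),
\end{equation*}
using $p_n \to 0$. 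Since $p_n = e^{a_\infty + o(1)} e^{-\beta P_n}$ and $e^{\beta P_{n-1}} = e^{\beta P_n} e^{-\beta p_n} = e^{\beta P_n}(1+o(1))$, the right-hand side is $\beta e^{a_\infty}(1 + o(1))$. Therefore $e^{\beta P_n} - e^{\beta P_{n-1}} \to \beta e^{a_\infty}$, and by Cesàro (Stolz–Cesàro) $e^{\beta P_n}/n \to \beta e^{a_\infty}$, i.e. $e^{\beta P_n} = \beta e^{a_\infty} n (1 + o(1))$.

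Finally, combine: $n\,p_n = n \, e^{a_\infty + o(1)} e^{-\beta P_n} = e^{a_\infty}(1+o(1)) \cdot \dfrac{n}{\beta e^{a_\infty} n (1+o(1))} = \dfrac{1}{\beta}(1 + o(1))$, which is the claim. The main obstacle is the bookkeeping in the middle step — justifying that the $o(1)$ error from $a_n \to a_\infty$ propagates cleanly through the exponential and the telescoping of $e^{\beta P_n}$ without being amplified; this is where $p_{n+1}/p_n \to 1$ (hence $p_n \to 0$, hence $\beta p_n \to 0$) is essential, since it both linearizes $e^{\beta p_n} - 1$ and controls the ratio $e^{\beta P_{n-1}}/e^{\beta P_n} \to 1$. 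Everything else is routine Stolz–Cesàro and algebra. (One should also double-check the degenerate possibility $P_\infty < \infty$ is genuinely excluded: if $\sum p_j < \infty$ then $p_n \to 0$ forces $\log p_n \to -\infty$ while $\beta P_n$ stays bounded, so $a_n \to -\infty$, contradicting convergence of $a_n$; thus $P_n \to \infty$ as claimed.)
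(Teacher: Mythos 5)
Your proof is correct, but it is a genuinely different argument from the paper's.

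The paper's proof is a bootstrapping/trapping argument using only the Cauchy property of $a_n = \log p_n + \beta P_n$ directly: it assumes for contradiction that $np_n$ is bounded away from $1$ (say $p_n \approx (1+2\epsilon)/n$), shows that then $p_k$ would have to stay trapped in a band around $(1+\mathrm{O}(\epsilon))/k$ for a long stretch (because leaving the band in either direction would make $a_r - a_n$ large), deduces that in fact $p_r$ must drop below $(1+\epsilon)/r$ in finite time, and iterates this to squeeze $\limsup np_n \le 1 + 6\delta$ for every $\delta > 0$. Your proof instead linearizes the implicit difference equation by passing to $e^{\beta P_n}$: you compute $e^{\beta P_n} - e^{\beta P_{n-1}} = e^{\beta P_{n-1}}(e^{\beta p_n}-1)$, use $e^{\beta P_{n-1}} p_n = e^{a_n}e^{-\beta p_n} \to e^{a_\infty}$, conclude the increments converge to $\beta e^{a_\infty}$, and finish with Ces\`aro. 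This is cleaner and more mechanical, and it has a side benefit: your argument nowhere uses the hypothesis $p_{n+1}/p_n \to 1$ --- you derive $p_n \to 0$ from $P_n \to \infty$ and boundedness of $a_n$, and that is all you need. Indeed, $p_{n+1}/p_n \to 1$ is \emph{implied} by the other two hypotheses (since $a_{n+1}-a_n = \log(p_{n+1}/p_n) + \beta p_{n+1} \to 0$ and $p_{n+1}\to 0$), whereas the paper's proof does invoke the ratio hypothesis explicitly at the final iteration step. So your proof is not only valid but proves a formally stronger statement. One small stylistic note: the parenthetical in which you establish $P_n\to\infty$ is garbled (you digress about $p_{n+1}/p_n\to 1$ before settling on the correct argument); just state it directly: if $P_\infty < \infty$ then $p_n \to 0$, so $\log p_n \to -\infty$ while $\beta P_n$ stays bounded, making $a_n \to -\infty$, a contradiction.
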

				
				\begin{proof}  It suffices to prove the result for $\beta = 1$, for otherwise
				we can consider $\tilde p_n = \beta \, p_n$. 
				Let
				\[    a_n = \log p_n + \sum_{j=1}^n p_j. \]
				The hypothesis implies that
				$\{a_n\}$ is a Cauchy sequence.

				We first claim that for every $\delta > 0$, there exists $N_\delta > 0$
				such that if   $n \geq N_\delta$ and $p_n = (1+ 2\epsilon)/ n$ with 
				$\eps\ge \delta$, then there does not exist $r >n$ with 
				\begin{align*}
				p_k  &\geq \frac{1 }{k}, \quad k=n,\dots, r-1,\\
				p_{r}  & \geq \frac{1 + 3\epsilon}{r}.
				\end{align*}
				Indeed, suppose  these  inequalities hold
				for some $n,r$.  Then,
				\[   \log(p_{ r}/p_n) \geq \log \frac{1+3\epsilon}{1+2\epsilon} -
				\log(r/n)   , \]
				\[     \sum_{j=n+1}^r p_j  \geq   \log(r/n)
				-  O(n^{-1}).\]
				and hence for $n$ sufficiently large, 
				\[   a_r  - a_n   \geq  \frac 12 \,  \log \frac{1+3\epsilon}{1+2\epsilon} 
				\geq    \frac 12 \log \frac{1+3\delta}{1+2\delta } .\]
				Since $a_n$ is a Cauchy sequence, this cannot be true for large $n$.

				We next claim
				that
				for every $\delta > 0$, there exists $N_\delta > 0$
				such that if   $n \geq N_\delta$ and $p_n = (1+2\epsilon) / n$ with 
				$\eps\ge \delta$, then there  exists $r$ such that 
				\begin{equation}\label{nov16.1}
				\frac{1+\epsilon}{k} \leq   p_k   < \frac{1+3\epsilon}{k}, \quad k=n,\dots, r-1,
				\end{equation}
				\[ p_{ r}   <  \frac{1 +  \epsilon}{ r}.\nonumber
				\]
				To see this, we consider the first $r$ such that   $p_{  r}   < \frac{1 +  \epsilon}{ r}$.
				By the previous claim, if such an $r$ exists, then \eqref{nov16.1}
				holds for $n$ large enough.  If no such $r$ exists, then by the argument above
				for all $r > n$, 
				\[              a_{r} -  a_n  \geq  \log\frac{1+\epsilon}{1+2\epsilon}
				+ \frac{\epsilon}{ 2} \,  \log(r/n) - (1 + \epsilon) \, O(n^{-1}) . \]
				Since the right-hand side goes to infinity as $r\rightarrow \infty$,
				this    contradicts the fact that $a_n$ is a Cauchy sequence.

				By iterating the last assertion, we can see that for every 
				$\delta > 0$,  there exists $N_\delta > 0$
				such that if   $n \geq N_\delta$  
				and $p_n = (1+2\epsilon) / n$ with 
				$\eps\ge \delta$,  then there exists $r > n$ such that
				\[             p_r <\frac{1+2\delta}{r},\;\;\textrm{and}\;\;  p_k \leq \frac{1 + 3 \epsilon}{k}, \;\;\;\; k=
				n,\ldots,r-1.\]
				Let $s$ be the first index greater than $r$ (if it exists)
				such that either 
				\[         p_k \leq \frac{1}{k}\;\; \mbox{ or } \;\;
				p_k \geq \frac{1 + 2\delta }{k}. \]
				Using $p_{n+1}/p_n \rightarrow 1$, we can see, perhaps by choosing
				a larger $N_\delta$ if necessary, that
				\[         \frac{1 - \delta}{k} \leq p_s \leq \frac{1 + 4 \delta}{k} . \]
				If $p_s \geq (1+2\delta)/k$,  then we can  iterate this argument
				with $\epsilon \leq 2 \delta$ to see that 
				\[  \limsup_{n \rightarrow \infty} n \, p_n \leq  1+ 6 \delta . \]
				The $\liminf$ can be done similarly.
				\end{proof}

				\subsection{Long range intersection}\label{sec:long}
				
				Let $S,W$ be simple random walks with
				corresponding stopping times $\sigma_n $. We will
				assume that $S_0 = w, W_0 = 0$. Let $\eta = \LE\left(S[0,\ \sigma_n]\right)$.
				Note that we are stopping the random walk $S$ at time $\sigma_n$ but we are
				allowing the random walk $W$ to run for infinite time.
				
				\begin{proposition}
				\label{prop:3.26} There exists $\alpha < \infty$ such that if $n^{-1} \leq
				e^{-n} \, |w| \leq 1 - n^{-1}$, then 
				\begin{equation*}
				\left|\log \Prob\{W[0,\infty] \cap \eta \neq \emptyset\} - \log[ \hat
				G^2_n(w) \, \hat p_n] \right| \leq c \, \frac{\log^\alpha n}{n}.
				\end{equation*}
				In particular, 
				\begin{equation*}
				\mathbb{E}\left[H(\eta^n) \right] = \frac{8 \, \hat p_n}{\pi^2} \, \left[1 +
				O\left( \frac{\log^\alpha n}{n}\right) \right].
				\end{equation*}
				\end{proposition}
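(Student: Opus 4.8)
The plan is to establish the exact relation via a first-passage (last-exit) decomposition of the event $\{W[0,\infty)\cap\eta\neq\emptyset\}$. Write $\eta = LE(S[0,\sigma_n])$ and recall $\eta$ starts at $w = S_0$. The key observation is that if $W$ hits $\eta$, we can decompose according to the \emph{last} point $z$ on $\eta$ that $W$ visits; conditionally on reaching $z$, the remaining walk from $z$ must avoid $\eta\setminus\{z\}$. Reading $\eta$ from its endpoint $S(\sigma_n)\in\partial C_n$ backwards, the portion of $\eta$ near its tip looks (after reversal and loop-erasure considerations) like an independent loop-erased walk run from $\partial C_n$, and the quantity $\hat p_n = \mathbb{E}[Z_n^3\,G_n^{-2}]$ is exactly the expected renormalized escape probability that appears. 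The Green's function $\hat G^2_n(w) = \sum_z G(0,z)G_n(w,z)$ counts the expected number of ``candidate'' last-exit sites weighted appropriately, and by the previous Lemma it equals $\frac{8}{\pi^2}[n - \log|w|] + O(e^{-n}) + O(|w|^{-2}\log|w|)$.

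The steps, in order, would be: (i) Write $\Prob\{W[0,\infty)\cap\eta\neq\emptyset\} = \sum_{z\in\eta} G(0,z)\,\Prob^z\{W[1,\infty)\cap(\eta\setminus\{z\})=\emptyset\}$ via last-exit decomposition, conditioning on $\eta$. (ii) Split $\eta$ into three pieces: the part in $C_{\epsilon n}$ near the start (which contributes negligibly since $|w|$ is of order $e^{\epsilon' n}$ down to $e^{(1-1/n)n}$ — here the hypothesis $n^{-1}\le e^{-n}|w|\le 1-n^{-1}$ is used), a ``bulk'' middle part, and the tip near $\partial C_n$. (iii) For the dominant contribution, use the slowly-recurrent-set machinery of Section~\ref{nicesec}: given $\eta$, the conditioned escape probability $\Prob^z\{W$ avoids $\eta\setminus\{z\}\}$ factors approximately as the escape probability from the ``near'' part times that from the ``far'' part, and the reversed-walk / time-reversal symmetry of LERW identifies the relevant escape probability near the tip with $Z_n\,G_n^{-1}$-type quantities. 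Raising to the appropriate power (the ``$3$'' comes from $W$, and the reversal of $\eta$ near a loop-free point introducing the structure that yields $Z_n^3 G_n^{-2}$ after accounting for the return-to-origin Green factor) and taking expectations gives $\hat p_n$. (iv) Multiply by the count $\sum_z G(0,z)$ over the relevant portion of $\eta$, which in expectation is $\hat G^2_n(w)[1+O(\log^\alpha n/n)]$ by the Lemma, using that $\eta$ passes through each shell $A(m-1,m)$ and the Green's function sums telescope as in \eqref{dec1.3}. (v) Take $w\in\partial C_{n-1}$ and use \eqref{dec16.1} to read off $\mathbb{E}[H(\eta^n)] = \frac{8}{\pi^2}\hat p_n[1+O(\log^\alpha n/n)]$, noting $H(\eta^n) = \Prob\{W[0,\infty)\cap\eta^n\neq\emptyset\mid\eta^n\}$ and $\eta^n = LE(S[\sigma_{n-1},\sigma_n])$ starts essentially on $\partial C_{n-1}$.

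The main obstacle will be step (iii): carefully justifying that the conditioned escape probability factors and that the factor near the tip of $\eta$, after the backward reading and after inserting the loop-free-time decomposition from Section~\ref{loopfreesec}, is genuinely distributed like $Z_n\,G_n^{-1}$ — i.e., that loop-erasures of subpaths agree with subpaths of the loop-erasure at loop-free points \eqref{loopfree}, and that the error from there being no loop-free point in the relevant window is $O(n^{-1})$ by \eqref{jun7.1}. One also has to control the ``middle'' part of $\eta$: the contribution $\sum_z G(0,z)\cdot(\text{escape prob})$ over shells $A(m-1,m)$ for $\epsilon n \ll m \ll n$ must be shown to be lower-order, which follows because for $\eta\in\niceset_n$ the escape probability there decays like a power of $n$ while $\sum_z G(0,z)$ over that range is only $O(n)$, and Proposition~\ref{dec4.prop1} together with \eqref{jun6.4} gives the needed uniform control. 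Assembling these estimates with the stated $\log^\alpha n/n$ error, where $\alpha$ absorbs the various $\log^2 n$, $\log^3 n$, $\log^4 n$ factors from the propositions in Section~\ref{nicesec}, yields the claim.
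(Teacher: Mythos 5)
Your step (i), a last-exit decomposition of $W$ onto the set $\eta$, is the standard formula (modulo a small slip: the condition after the last visit should be $W[1,\infty)\cap\eta=\emptyset$, not $\cap(\eta\setminus\{z\})=\emptyset$). However, this decomposition by itself only produces \emph{one} random walk avoiding the path near $z$, and after further decomposing the event $\{z\in\eta\}$ by the last visit of $S$ to $z$ you end up with a local event of the form ``$\omega^3=S[\rho+1,\sigma_n]$ avoids $\eta^-_z$, and $W[\lambda+1,\infty)$ avoids $\eta^-_z\cup\eta^+_z$'': one walk escaping a single LERW and a second walk escaping the union of that LERW and a constrained LERW. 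This is \emph{not} $\hat p_n=\mathbb{E}[Z_n^3 G_n^{-2}]=\mathbb{E}\bigl[\Es(\Gamma_n)\,\overline{\Es}(\Gamma_n\cup\{0\})^2\bigr]$, which is the probability that \emph{three} independent walks each avoid a single LERW. Your sentence about ``the `3' comes from $W$, and the reversal of $\eta$ near a loop-free point'' does not supply the missing third walk, and I do not see how to get the cube and the factor $G_n^{-2}$ out of your decomposition without essentially reproving the proposition via a different route.

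The paper sidesteps this by choosing the decomposition point differently: $z=\eta_i$ is the \emph{first} point of $\eta$ in $\eta$-ordering visited by $W$, and one then takes the last visit $\rho$ of $S$ to $z$ and the last visit $\lambda$ of $W$ to $z$. With that choice, the relevant avoidance condition becomes ``$LE(S[0,\rho])\cap\bigl(S[\rho+1,\sigma_n]\cup W[0,\lambda]\cup W[\lambda+1,\infty)\bigr)=\{z\}$ and $z\notin S[\rho+1,\sigma_n]\cup W[\lambda+1,\infty)$,'' i.e.\ after translating $z$ to the origin and reversing $S[0,\rho]$ and $W[0,\lambda]$, \emph{three} independent walks $\omega^2,\omega^3,\omega^4$ must avoid the single loop-erasure $LE(\omega^1)$ (two avoiding $LE(\omega^1)$ entirely, one hitting it only at the origin). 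This is exactly what produces $\hat p_n$, and the $G_n(w,z)\,G(0,z)$ counting sums to $\hat G_n^2(w)$. Your steps (ii), (iv), (v) — the near/far splitting, using the Green's function lemma, and specializing to $w\in\partial C_{n-1}$ — are in the right spirit and match the paper's outer structure, but without the first-point-in-$\eta$-ordering idea the inner local term does not reduce to $\hat p_n$, so the proposal as written has a genuine gap at step (iii).
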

				
				Throughout this section, let  $\theta_n$ denote an error term that
				decays at least as fast as $\log^\alpha n/n$ for some $\alpha$ (with the
				implicit uniformity of the estimate over all $n^{-1} \leq e^{-n} \, |w| \leq
				1 - n^{-1}$). $\theta_n$ may vary line by line. Then
				the second assertion in Proposition~\ref{prop:3.26} follows immediately from the first and \eqref{dec16.1}.
				We can write the conclusion of the proposition as 
				\begin{equation*}
				\Prob\{W[0,\infty] \cap \eta \neq \emptyset\} = \hat G^2_n(w) \, \hat p_n\,
				\left[1 + \theta_n\right],
				\end{equation*}
				Note that $\hat G_n^2(w) \geq c/n$ if $|w|
				\leq e^{n}(1 - n^{-1})$ and hence $\hat G_n^2(w)
				\, \hat p_n \leq c/n^2$.

				We start by giving the sketch of the proof which is fairly straightforward.
				On the event $\{W[0,\infty] \cap \eta \neq \emptyset\}$ there are typically
				many points in $W[0,\infty] \cap \eta$. We focus on a particular one. This
				is analogous to the situation when one is studying the probability that a
				random walk visits a set. In the latter case, one usually focuses on the
				first or last visit. In our case with two paths, the notions of ``first''
				and ``last'' are a little ambiguous so we have to take some care. We will
				consider the first point on $\eta$ that is visited by $W$ and then focus on
				the last visit by $W$ to this first point on $\eta$.
				
				To be precise, we write 
				\begin{align*}
				\eta &= [\eta_0,\ldots,\eta_m],\\
				i &= \min\{t: \eta_t \in W[0,\infty) \},\\
				\rho &= \max\{t \geq 0 : S_{t } = \eta_i\},\\
				\lambda &= \max\{t: W_t = \eta_i \}.
				\end{align*}
				Then the event $\{\rho = j; \lambda = k; S_\rho = W_\lambda = z \}$ is the
				event that:
				
				\begin{itemize}
				\item[\textbf{I}] : \;\;\; $j < \sigma_n,\;\;\;\;S_j = z, \;\;\;\; W_k = z , $
				
				{\color{blue} }
				
				\item[\textbf{II}] :\;\;\; $\LE\left(S[0,j] \right) \cap
				\left(S[j+1,\sigma_n] \cup W[0,k] \cup W[k+1,\infty) \right) = \{z\}, $
				\item[\textbf{III}] :\;\;\; $z \not\in S[j+1,\sigma_n] \cup W[k+1,\infty).$
				\end{itemize}
				Viewing the picture at $z$, we see that \(	\Prob\left\{ \mbox{\textbf{II} and \textbf{III}}\right\} \sim \hat p_n.\)
				Using the slowly recurrent nature of the random walk paths, we expect that
				as long as $z$ is not too close to $0$,$w$, or $\partial C_n$, then \textbf{I%
				} is almost independent of (\textbf{II} and \textbf{III}).
				This then gives 
				\begin{equation*}
				\Prob \{\rho = j; \lambda = k; S_\rho = W_\lambda = z \} \sim \Prob\{S_j =
				W_k = z \} \, \hat p_n ,
				\end{equation*}
				and summing over $j,k,z$ gives 
				\begin{equation*}
				\Prob\{W[0,\infty] \cap \eta \neq \emptyset\} \sim \hat p_n \, \hat G_n^2(w).
				\end{equation*}
				The following proof makes this reasoning precise. 
				\begin{proof}[Proof of Proposition~\ref{prop:3.26}]

				Let $V$ be the event 
				\begin{equation*}
				V = \{w \not \in S[1,\sigma_n], 0 \not \in W[1,\infty)\}.
				\end{equation*}
				{Using
					$\P[0\notin W[1,\infty)]=\P^w[w\in S[1,\infty)] = G(0,0)^{-1},$
					we can see that  $|\Prob(V) - G(0,0)^{-2}| $ is  fast decaying.
					Let $\tau = \max\{j: W_j = 0\}$.  Then
					$     \Prob\{\tau > \sigma_{ \log^2 n}\} $
					and  
					$\Prob\{S[0,\infty) \cap C_{\log^2 n} \neq \eset\}$
					are fast decaying and hence so is 
					\begin{equation*}
					\left| \Prob\{W[0,\infty] \cap \eta \neq \emptyset \mid V\} - \Prob%
					\{W[0,\infty] \cap \eta \neq \emptyset \} \right|  
					\end{equation*}	}
				%
				Therefore it suffices to show that 
				\begin{equation}  \label{dec4.1}
				\Prob\left[ V \cap \{W[0,\infty] \cap \eta \neq \emptyset\}\right] = \frac{%
					\hat G^2_n(w)}{G(0,0)^2} \, \hat p_n \, \left[1 + \theta_n\right].
				\end{equation}
				
				Let $E(j,k,z),E_z$ be the events 
				\begin{equation*}
				E(j,k,z) = V \cap \{\rho = j; \lambda = k; S_\rho = W_\lambda = z \},
				\;\;\;\; E_z = \bigcup_{j,k = 0}^\infty E(j,k,z) .
				\end{equation*}
				Then \(	\Prob\left[ V \cap \{W[0,\infty] \cap \eta \neq \emptyset\}\right] = \sum_{z
					\in C_n} \Prob(E_z).\)
				Let 
				\begin{equation*}
				C_n^{\prime }=C_{n,w}^{\prime }= \{z\in C_n: |z| \geq n^{-4} e^{n}, \; |z-w|
				\geq n^{-4} \, e^{n}, \; |z| \leq (1-n^{-4}) e^{n}\}.
				\end{equation*}
				We can use the easy estimate \(\Prob(E_z) \leq G_n(w,z) \, G(0,z)\)	to see that 
				\begin{equation*}
				\sum_{z \in C_n \setminus C_n^{\prime }} \Prob(E_n) \leq O(n^{-6}) ,
				\end{equation*}
				so it suffices to estimate $\Prob(E_z)$ for $z \in C_n^{\prime }$.
				
				We will translate so that $z$ is the origin and will reverse the paths $S[0,\rho]$ and	$W[0,\lambda]$. 
				Let  $\omega^1,\ldots,%
				\omega^4$ be four independent simple random walks starting at the origin and let $x = w-z, y =-z$.
				Let $l^i $ denote the smallest index $l$ such that $|\omega^i_l -y| \geq e^{n}$. 
				Using the fact that reverse loop-erasing has the
				same distribution as forward loop-erasing, we see that $\Prob[E(j,k,z)]$ can be
				given as the probability of the following event 
				\begin{equation*}
				\left(\omega^3[1,l^3] \cup \omega^4[1,\infty)\right) \cap \\LE(\omega^1[0,j])
				= \eset,
				\end{equation*}
				\begin{equation*}
				\omega^2[0,k] \cap \LE(\omega^1[0,j]) = \{0\},
				\end{equation*}
				\begin{equation*}
				j < l^1, \;\;\;\; \omega^1(j) = x, \;\;\;\; x \not\in \omega^1[0,j-1],
				\end{equation*}
				\begin{equation*}
				\omega^2(k) = y, \;\;\;\; y \not \in \omega^2[0,k-1],
				\end{equation*}
				where we translate the time reversal of $S[0,j]$, the time reversal of $W[0,k]$, the path $W[k,\infty)$ and $S[j,\infty)$ into $\omega_1,\omega_2,\omega_3,\omega_4$ respectively.
				Note that $z\in C_n'$ implies 
				\begin{equation*}
				n^{-1} \, e^n \leq |y|, |x-y|, |x| \leq e^n \, [1 - n^{-1}],
				\end{equation*}
				
				We now rewrite this. We fix $x,y$ and let $C^y_n = y + C_n$. Let $%
				W^1,W^2,W^3,$ $W^4$ be independent random walks starting at the origin and
				let  
				\begin{align*}
				&{T^3=\infty},\qquad \textrm{and}\qquad T^i = T^i_n = \min\{j: W^i_j\not\in C^y_n\}\textrm{ for }i=1\textrm{ and }4;\\
				&\tau^1 = \min\{m: W^1_m = x\}, \qquad \textrm{and}\qquad  \tau^2 = \min\{m: W^2_m = y\};\\
				&	\hat \Gamma = \hat \Gamma_n = \LE \left(W^1[0,\tau^1]\right).
				\end{align*}{We also override the notation $S$ to denote an simple random walk on $\Z^4$ starting from 0}.
				{ Let $E$ be  the event 
					\begin{equation*}
					\hat \Gamma \cap \left(W^2[1,\tau^2] \cap W^3[1,T^3]\right) = \eset \qquad \textrm{and}\qquad
					\hat \Gamma \cap W^4[0,T^4] = \{0\}.
					\end{equation*}
					Then $\P\{E, \tau^1< T^1,  \tau^2< \infty\}$ equals $\P\{V\cap W[0,\infty) \cap \eta\}$ in \eqref{dec4.1}.
					Note that  
					\begin{align*}
					\Prob\{\tau^1 < T^1\} &= \frac{G_n(w,z)}{ G_n(w,w)} = \frac{G_n(w,z)}{G(0,0)}+ o(e^{-n}),\\
					\Prob\{\tau^2 < \infty \} &= \frac{G(0,y)}{ G(y,y)} =\frac{G(0,z)}{ G(0,0)}.
					\end{align*}  
					Therefore in order to prove \eqref{dec4.1}, it suffices to prove that 
					\begin{equation}\label{eq:key}
					p_n^{\prime }(x,y):=\P\{E\mid \tau^1< T^1,  \tau^2< \infty\}= \hat p_n \, \left[1 + \theta_n\right].
					\end{equation}}
				We write $Q$ for the distribution of $W_1,W_2,W_3,W_4$ under the conditioning $\{\tau^1< T^1,  \tau^2< \infty\}$  in \eqref{eq:key}. 
				Then consider two events $E_1,E_2$ as follows. Let $\hat W = W^2[1,\tau^2] \cup W^3[1,T^3] \cup
				W^4[0,T^4]$ and let $E_0,E_1,E_2$ be the events 
				\begin{equation*}
				E_0 = \{ 0 \not\in W^2[1,\tau^2] \cup W^3[1,T^3] \},
				\end{equation*}
				\begin{equation*}
				E_1 = E_0 \cap \left\{\hat W \cap \hat \Gamma \cap C_{n-\log^3n} = \{0\}
				\right\},
				\end{equation*}
				\begin{equation*}
				E_2 = E_1 \cap \left\{\hat W \cap \Theta_n = \emptyset \right\},
				\end{equation*}
				where \(		\Theta_n = W^1[0,\tau^1] \cap A(n-\log^3n,2n).\)
				Since $Q$-almost surely 
				\begin{equation*}
				\hat \Gamma \cap C_{n-\log^3n} \subset \hat \Gamma \subset \Theta_n
				\cup(\hat \Gamma \cap C_{n-\log^3n} ),
				\end{equation*}We have
				\[
				Q(E_2) \le p'_n(x,y) \le Q(E_1).
				\] Now to prove \eqref{eq:key}. it suffices to show
				\begin{equation}\label{eq:E_1}
				Q(E_1) = \hat p_n \, \left[1 +\theta_n \right],
				\end{equation}
				\begin{equation}\label{eq:E_2}
				Q ( E_1 \setminus E_2) \leq n^{-1} \, \theta_n.
				\end{equation}
				For each $z\in \Z^4$ let 
				\begin{equation*}
				\phi _{x}(z )=\phi _{x,n}(z)=\Prob^{z}\{\tau ^{1}<T_{n}^{1}\}\qquad \textrm{and}\qquad \phi _{y}(z)=\Prob^{z }\{\tau ^{2}<\infty \}.
				\end{equation*}
				Therefore for any  path $\omega
				=[0,\omega ^{1},\ldots ,\omega ^{m}]$ with $0,\omega ^{1},\ldots ,\omega
				^{m-1}\in C_{n}^{y}\setminus \{x\}$, 
				\begin{equation}
				Q\{[W_{0}^{1},\ldots ,W_{m}^{1}]=\omega \}=\Prob\{[S_{0},\ldots
				,S_{m}]=\omega \}\,\frac{\phi _{x}(\omega ^{m})}{\phi _{x}(0)},
				\label{dec4.2}
				\end{equation}
				Similarly if $\omega
				=[0,\omega ^{1},\ldots ,\omega ^{m}]$ is a path with $y\not\in \{0,\omega
				^{1},\ldots ,\omega ^{m-1}\}$,  then
				\begin{equation*}
				Q\{[W_{0}^{2},\ldots ,W_{m}^{2}]=\omega \}=\Prob\{[S_{0},\ldots
				,S_{m}]=\omega \}\,\frac{\phi _{y}(\omega ^{m})}{\phi _{y}(0)}.
				\end{equation*}%
				Let $\zeta \in \{x,y\}$.  By \eqref{harmonic}, there exists a fast decaying sequence $\eps_n$ such that 
				\begin{equation}\label{eq:RN}
				\phi _{\zeta }(z)=\phi _{\zeta }(0)\,\left[ 1+O(\epsilon _{n})\right] \qquad \textrm{if }|z|\leq e^{n}\,e^{-\log ^{2}n}.
				\end{equation}%
				This implies that $W^i,S$ ($i=1,2$) can be coupled on the same probability space
				such that, except on an event of probability $O(\epsilon_n)$, 
				\begin{equation*}
				\hat \Gamma \cap C_{n-\log^3n} = \LE\left(S[0,\infty)\right) \cap
				C_{n-\log^3n} .
				\end{equation*}
				Therefore $Q(E_1)-p_{n-\log n^3n}$ is fast decaying and hence \eqref{eq:E_1}   follows from Proposition~\ref{nov22.cor5}.

				To prove \eqref{eq:E_2}, consider the following events whose union covers $E_1\setminus E_2$:
				\begin{align*}
				&F^2=E_1\cap \{W^2[1,\tau^2]\cap \Theta_n\neq \emptyset\},\qquad F^3=E_1\cap \{W^3[1,T^3]\cap \Theta_n\neq \emptyset\},\\
				&F^4=E_1\cap \{W^4[1,T^4]\cap \Theta_n\neq \emptyset\}.
				\end{align*}
				We are now going to prove $Q(F^i)\le n^{-1}\theta_n$ for $i=2,3,4$, thus proving \eqref{eq:E_2}.
				
				{
					By  Proposition~\ref{nov15.prop1}, we can find a fast decaying sequence $\delta_n$ such that 
					\begin{equation*}
					\Prob\left\{H\left(S[0,\infty) \cap A(n-1,n) \right) \geq \frac{\log^2 n}{n}
					\right\} \leq \delta^{100}_n.
					\end{equation*}
					Let  \(\rho = \rho_n = \min\{j: |W_j^1 - x| \leq e^n \, \delta_n\}.\) 
					Using the  strong Markov property of $W^1$ under $Q$,  we can find a constant $\alpha>0$ such that 
					\begin{equation*}
					Q\left\{W^1[\rho,\tau^1] \not \in \{|z -x| \leq e^n \, \sqrt {\delta_n}\}\right\} = O(\delta^\alpha_n).
					\end{equation*}
					Since $|x| \geq e^{-n} \, n^{-1}$, we know that 
					\begin{equation*}
					H\left (\{|z -x| \leq e^n \, \sqrt {\delta_n}\} \right) \leq n^2 \, \delta_n.
					\end{equation*}
					By the strong Markov property, for all $w\in C^y_n$ with $|w-x|\ge e^n\sqrt{\delta_n}$,
					\[
					\phi(0) /\phi(w) \gtrsim \P[\rho_n<T^1]\ge \delta^{50}_n.
					\]
					Using \eqref{dec4.2}
					we have
					\begin{equation}\label{eq:fdQ}
					Q\left\{H\left(W^1[0,\tau^1] \cap A(n-1,n) \right) \geq \frac{\log^2 n}{n}
					\right\} \leq \frac{O(\delta^{100}_n) }{\P[\rho_n<T^1]} + O(\delta^\alpha_n),
					\end{equation}which is fast decaying. Therefore, $Q[W^1[0,\tau^1]\notin \niceset_n]$ is fast decaying.
					Let
					\begin{equation}\label{eq:Q}
					R_n= H\left(W^1[0,\tau^1] \cap A(n-2\log^3 n, n+1) \right).
					\end{equation}
					Let $\sigma=\inf\{m: W^3_m\notin C_{n-\log^3n} \}$ and  $E=\{W^3[1,\sigma]\cap \hat \Gamma= \emptyset\}$.
					Let $\bar Q$ be the probability measure conditioning on $W^1,W^2,W^4$. Then 	
					\[
					\bar Q[F^3] \le \bar Q(E) \sum_{z\in \partial C_{n-\log^3n}}\bar Q[S_{\sigma}=z\mid E]  Q[W^{3}[\sigma,T^3]\cap \hat{\Gamma}=\eset \mid W^3_\sigma=z].
					\]			On the event $\hat \Gamma\in \nicesettwo_{n-\log^3n}$, by Proposition~\ref{cond}, there exists $c<\infty$ such that
					\begin{align*}
					&\sum_{z\in \partial C_{n-\log^3n}}\bar Q[W^3_{\sigma}=z\mid E]  \bar Q[W^{3}[\sigma,T^3]\cap \hat{\Gamma}\neq \eset \mid W^3_\sigma=z]\\
					&\le c\bar Q[W^{3}[\sigma,T^3]\cap \hat{\Gamma}\neq \eset]\\
					&\le c\left( H(\hat \Gamma \cap A(n-2\log^3 n, n+1))+ Q[W^3[\sigma,\infty)\cap C_{n-2\log^3n}\neq\emptyset]\right)\\
					&\le  c(R_n+Q[W^3[\sigma,\infty)\cap C_{n-2\log^3n}\neq\emptyset])\le cR_n+ O(e^{-\log^3n}) .
					\end{align*}	 
					Note that  $\Es(\hat \Gamma)\le 2^{-(n-\log^3n)/4}$ when $\hat \Gamma\in \nicesettwo_n$. Moreover, $Q[\hat \Gamma_n\notin \niceset_n]$ is fast decaying.
					Applying the union bound to \eqref{eq:fdQ}, we have $Q[R_n\geq \log^7n/n]$ is fast decaying. 
					Averaging over $W^1,W^2,W^4$, we have 
					\[
					Q(F^3)\le O(\log^7n/n)Q(E_1)+\eps_n
					\]
					where $\eps_n$ is fast decaying.
					This gives $Q(F^3)\le n^{-1}\theta_n$ for some $\theta_n$. The same argument shows $Q(F^4)\le n^{-1}\theta_n$

					We still need a similar result to conclude $Q(F^2)\le n^{-1}\theta_n$.  
					By \eqref{eq:RN}, we can couple an usual simple random walk $S$ with $W^2$ (under the $Q$-probability) 
					such that $S$ and $W^2$ agree until
					$\inf\{m: S_m\notin C_{n-\log^3n} \}$ except for an event of fast decaying probability. 
					The same argument as above reduces proving 
					$Q(F^2)\le n^{-1}\theta_n$ to showing that there exists $\theta_n$ such that except for an event of fast decaying $Q$-probability, 
					\begin{equation*}
					Q\{W^{2}[0,\tau ^{2}]\cap (\hat{\Gamma}\cap A(n-2\log ^{3}n,n+1))\neq \eset%
					\mid \hat{\Gamma}\}\leq \theta _{n}.
					\end{equation*}
					which follows from a similar argument for the bound for $Q[R_n\ge \log^7n/n]$ above.
				}\qedhere
				\end{proof}
				As explained in Section~\ref{subsec:outline}, combined with Lemma~\ref{nov21.lemma2}, we conclude the proof of Theorem~\ref{maintheoremjune}. 
				Inserting $\hat p_n\sim \pi^2/24$ back to Proposition~\ref{prop:3.26},  we get
				\begin{corollary}
				If $n^{-1} \leq e^{-n} \, |w| \leq 1 - n^{-1} $, then 
				\begin{equation*}
				\Prob\{W[0,\infty) \cap \eta \neq \eset\} \sim \frac{\pi^2 \, \hat G_n^2 (w)%
				} {24 \, n}.
				\end{equation*}
				More precisely, 
				\begin{equation*}
				\lim_{n \rightarrow \infty}\; \max_{n^{-1} \leq e^{-n} \, |w| \leq 1 -
					n^{-1} } \; \left|24 \, n\, \Prob\{W[0,\infty) \cap \eta \neq \eset\} -
				\pi^2 \, \hat G^2_n(w) \right| = 0 .
				\end{equation*}
				\end{corollary}
				By a very similar proof one can show the following variant of Proposition~\ref{prop:3.26} that implies Theorem~\ref{thm:long-range}: 
				\begin{proposition}\label{prop:2pt}
				In the setting of Theorem~\ref{thm:long-range}, there exists $\alpha < \infty$ such that if $n^{-1} \leq
				e^{-n} \, |w| \leq 1 - n^{-1}$, then 
				\begin{equation*}
				\left|\log \Prob\{W[0, \sigma^W_n] \cap \LE(S[0,\sigma_n]) \neq \emptyset\} - \log[
				G^2_n(w) \, \hat p_n] \right| \leq c \, \frac{\log^\alpha n}{n}.
				\end{equation*}
				
				\end{proposition}

				\section{Two-sided loop-erased random walk\label{2side}}  \label{sec:two-sided}
				We start by finishing the proof of Theorem~\ref{thm:inter}.  
				{ Let \(\tilde Z_n = \Es(\Gamma;n).\)   We first note that the limit
					\(\lim_{ n\rightarrow \infty } n^{1/3}
					\, \tilde Z_n\) 
					exists almost surely.  We only need
					condsider  $\Gamma\in \nicesettwo$ since otherwise the limit is 0 almost surely.
					In this case, existence
					is established  by Corollary	\ref{cor.jun22}, Proposition~\ref{nov22.prop3} and the fact that $3nh_n\sim 1$.   
					Recall \eqref{jul19.4}. We have 
					\[   \E\left[\tilde Z_n^r \right]
					= \E \left[  Z_n^r\right]
					\left[1 + O(\log^4 n/n) \right].
					\]
					Since
					for each $r$, $\E\left[n^{r/3} \ \tilde Z_n^r\right]$
					is uniformly bounded, we also get the limit in
					$L^p$ for each $p>0$.  We then get
					Theorem \ref{thm:inter} using \eqref{jul18.1}.}

				If $\eta$ is an infinite  (one-sided)
				path starting at the origin,
				and $W$ is a simple random walk with stopping
				times $\sigma^W_n=\inf\{W_j\notin C_n\}$, we define 
				\[       \phi_\eta(x)  =
				\lim_{n \rightarrow \infty}
				\, n^{1/3 }
				\, \Prob^x\{W[0,\sigma^W_n] \cap \eta = \eset\}.\]
				\[  \nabla \phi_\eta =  \frac 18 \sum_{|y| = 1}
				\phi_\eta(y)  .\]
				We let $\saws$ denote the set 
				infinite self-avoiding
				paths starting at the origin
				such that the limit above exists and
				is finite for all $x \in \Z^4$ and  let $\saws^+$
				be the set of such $\eta$ with
				$\nabla \phi_\eta > 0$. 
				We can restate Theorem~\ref{thm:inter}.   as follows:  if
				$\eta =   \LE(S[0,\infty))$, then with probability
				one, $\eta \in \saws$ 
				and with
				positive probability $\eta \in \saws^+$.
				Moreover, $ \nabla \phi_\eta$ is in $L^p$ for all $p$.
				
				We can now construct the two-sided loop-erased
				random walk in $\Z^4$.  This is a measure on 
				doubly-infinite self-avoiding paths
				\[   \omega = \left[\cdots  \omega_{-2},  \omega_{-1},
				\  \omega_0, \  \omega_1,  \omega_2. \cdots\right] \]
				with $  \omega_0=0$.  Each $\omega$ can be described in terms of the
				sequence of one-sided paths
				\[     \eta^j = \left[\  \omega_{j}   ,
				\  \omega_{j-1} ,   \omega_{j-2} ,
				\ldots \right] - \omega_j, \;\;\;\;
				j\in \N, \]
				where $\eta^{j+1}$ is obtained from $\eta^j$ by 
				choosing
				$|x| = 1$, attaching $x$ to
				the beginning of $\eta^j$, and translating by $-x$.
				The transition probabilities for the chain are specified
				by saying that if $\eta^j = \eta$,
				then $x$ is chosen with probability
				$\phi_\eta(x)/
				\nabla \phi_\eta$.  
				We choose $\eta^0 =  \eta = \LE(S[0,\infty))$
				tilting by $\nabla \phi_\eta/\E[\nabla \phi_\eta]$.

				We give another definition.  Suppose
				\[   \eta = [\eta_{-k},\eta_{-k-1},\ldots, \eta_{j-1},
				\eta_j] \]
				is a (finite) self-avoiding walk in $\Z^4$.  Let
				$S,W$ be independent random walk starting at
				$z= \eta_{-k}, w = \eta_j$, respectively,  with
				corresponding stopping times $\sigma^S_n,
				\sigma^W_n$, respectively, and
				let $E_n$ be the event
				\[      S[1, \sigma^S_n] \cap \eta = \eset,\;\;\;\;
				W[1,\sigma^W_n] \cap \eta = \eset , \]
				\begin{equation}  \label{jul21.1}
				\LE(S[1,\sigma^S_n] ) \cap W[1,
				\sigma^W_n] = \eset . 
				\end{equation}
				We define
				\begin{equation}  \label{jul21.2}
				\Es(\eta) = \lim_{n \rightarrow \infty} n^{1/3}\,
				\Prob^{z,w}[E_n].
				\end{equation}
				It follows from our theorems that the
				limit exists.  Moreover (see \cite[Chapter~9]{lawler-limic}), the limit would
				be the same if condition \eqref{jul21.1}
				in the event $E_n$ is replaced by
				\[       S[1,\sigma^S_n]  \cap \LE(W[1,
				\sigma^W_n] ) = \eset . \]
				From this we see that $\Es(\eta)$ is translation invariant
				and also invariant under path reversal.  The
				probability that the two-sided loop-erased walk
				produces $\eta$ is
				\[       8^{-(j+k)} \, F_\eta \, \frac{\Es(\eta)}{\Es(0)}.\]
				Here $\Es(0)$ is the quantity where $\eta$
				is the trivial walk $[0]$ and $F_\eta$ is a loop
				term that can be given in several ways, e.g.,
				\[   F_\eta = \prod_{i=-k}^j G_{A_i}(\eta_i,\eta_i),\]
				where $A_{i} = \Z^4 \setminus \{\eta_{-k},\ldots,
				\eta_{i-1} \}$.  Although not immediately obvious, this
				quantity depends only on the set $\{\eta_{-k},
				\ldots,\eta_j\}$ and not on the ordering of the points.

				\begin{itemize}
				\item  For $d\geq 5$, it was constucted 
				in \cite{CCLERW}, where one can define $\phi_\eta$
				by
				\[   \Prob\{W[0,\infty) \cap \eta = \eset\}.\]
				In this case, the marginal distribution on
				the past or future of the path is absolutely continuous
				with respect to the one-sided measure with a bounded
				Radon-Nikodym derivative.
				\item  For $d=4$, it is absolutely continuous with
				an $L^p$, but not uniformly bounded, derivative.
				\item  In \cite{lawler-twoside}
				the two-sided walk is constructed for $d=2,3$
				using \eqref{jul21.2} replacing $n^{1/3}$ with a sequence $a_n$.
				(The proof there also works for $d > 3$ but does not
				give as strong a result as above.)
				If $d=2$, it is known that $a_n ={e^{3n/4}}$ works; for
				$d=3,$ it is expected that we can choose $a_n
				= {e^{\beta n}}$ for an appropriate $\beta$ but this has
				not been proven.

				\end{itemize}
				
				\section{Gaussian limits for the spin field} \label{sec:field}
				In this section, we start by reviewing some known facts of UST and random
				walk Green's function, then proving Theorem \ref{BGF} by applying main
				estimates of LERW. 
				
				{}
				
				\subsection{Uniform spanning trees}
				
				\label{sec:USF}
				
				Here we review some facts about the uniform spanning forest (that is, wired
				spanning trees) on finite subsets of $\mathbb{Z}^{d}$  on $\mathbb{Z}^{d}$. Most of the facts extend to general graphs
				as well. For more details, see \cite[Chapter 9]{lawler-limic}.
				
				Given a finite subset $A\subset \mathbb{Z}^{d}$, the uniform wired spanning
				tree in $A$ is a subgraph of the graph $A \cup \{\partial A\}$, choosing
				uniformly random among all spanning trees of $A \cup \{\partial A\}$. (A
				spanning tree $\mathcal{T}$ is a subgraph such that any two vertices in $%
				\mathcal{T}$ are connected by a unique simple path in $\mathcal{T}$). We
				define the \textit{uniform spanning forest (USF)} on $A$ to be the uniform
				wired spanning tree restricted to the edges in $A$. One can also consider
				the uniform spanning forest on all of $\mathbb{Z}^d$ \cite{Pem91,BLPS}, but
				we will not need this construction.
				
				The uniform wired spanning tree, and hence the USF, on $A$ can be generated
				by Wilson's algorithm \cite{Wil96} which we recall here:
				
				\begin{itemize}
				\item Order the elements of $A = \{x_1,\dots,x_k\}$.
				
				\item Start an SRW at $x_1$ and stop it when in reaches $%
				\partial A$ giving a nearest neighbor path $\omega$. Erase the loops 
				chronologically to produce  $\eta =  \LE(\omega)$. Add all
				the edges of $\eta$ to the tree which now gives a tree $\mathcal{T}_1$ on a
				subset of $A \cup \{\partial A\}$ that includes $\partial A$.
				
				\item Choose the vertex of smallest index that has not been included and run
				a simple random walk until it reaches a vertex in $\mathcal{T}_1$. Erase the
				loops and add the new edges to $\mathcal{T}_1$ in order to produce a tree $%
				\mathcal{T}_2$.
				
				\item Continue until all vertices are included in the tree.
				\end{itemize}
				
				Wilson's theorem states that the distribution of the tree is independent of
				the order in which the vertices were chosen and is uniform among all
				spanning trees. In particular we get the following.
				
				\begin{itemize}
				\item If $x,y \in A$, let $S^x,S^y$ be two independent SRW's starting from $%
				x,y$ respectively. Then the probability  that $x,y$ are in the same component
				of the USF equals to  $\P \{\LE(\omega^x) \cap \omega^y = \emptyset\}.$
				
				\end{itemize}
				
				Using this characterization, we can see the three regimes for the dimension $%
				d$. Let us first consider the probabilities that neighboring points are in
				the same component. Let $q_N$ be the probability that a nearest neighbor of
				the origin is in a different component as the origin when $A=A_N$. Then 
				\begin{equation*}
				q_\infty := \lim_{N \rightarrow \infty } q_N >0 , \;\;\;\;\; d \geq 5,
				\end{equation*}
				\begin{equation*}
				q_N \asymp (\log N)^{-1/3}, \;\;\;\; d = 4 ,
				\end{equation*}
				For $d < 4$, $q_N$ decays like a power of $N$. For far away points, we have
				
				\begin{itemize}
				\item If $d > 4$, and $|x| = n$, the probability that $0$ and $x$ are in the
				same component is comparable to $|x|^{4-d}$. This is true even if $N =\infty$%
				.
				
				\item If $d = 4$ and $|x| = n$, the probability that that $0$ and $x$ are in
				the same component is comparable to $1/\log n$. However, if we chose $N =
				\infty$, the probability would equal to one.
				\end{itemize}
				
				The last fact can be used to show that the USF in all of $%
				\mathbb{Z}^4$ is, in fact, a tree. For $d < 4$, the probability that $0$ and 
				$x$ are in the same component is asymptotic to $1$ and our construction is
				not interesting. This is why we restrict to $d \geq 4$.

				\subsection{Proof of Theorem \protect\ref{BGF}}
				
				\label{subsec:moment} Here we give the proof of the theorem by applying Theorem~\ref%
				{maintheoremjune} and Proposition~%
				\ref{prop:2pt}. We will only consider the $d=4$ case here. It suffices to prove the result for $m =1, h_1 = h$, as the general result
				follows by applying the $k=1$ result to any linear combination of $h_1,\ldots,h_m$.
				
				We fix $h\in C_{0}^{\infty }$ with $\int h=0$ and allow implicit constants
				to depend on $h$. We will write just $Y_{x}$ for $Y_{x,n}$. Let $K$ be such
				that $h(x)=0$ for $|x|>K$ .
				
				Let us write $L_{n}=n^{-1}\,\mathbb{Z}^{4}\cap \{|x|\leq K\}$ and  $a_{n}=\sqrt{3\log n}$. Let
				\begin{equation*}
				\langle h,\phi _{n}\rangle =n^{-2}\,a_{n}\sum_{x\in
					L_{n}}h(x)\,Y_{nx}=n^{-2}\,a_{n}\sum_{nx\in A_{nK}}h(x)\,Y_{nx},
				\end{equation*}%
				Let $q_N(x,y)$ be the probability that $x,y$ are in the same component of
				the USF on $A_N$. Note that 
				\begin{equation*}
				\mathbb{E}\left[Y_{x,n} \, Y_{y,n}\right] = q_N(x,y),
				\end{equation*}
				\begin{equation*}
				\mathbb{E}\left[ \langle h, \phi_n \rangle ^2 \right] = n^{-4} \sum_{x \in
					L_n} h(x) \,h(y) \, a_n^2 \, q_N(nx,ny).
				\end{equation*}
				To estimate $\mathbb{E}\left[ \langle h, \phi_n \rangle ^2 \right]$, we need the follow two lemmas.
				
				{ 
					Let $G_N(x,y)$ denote the usual random walk Green's function on $A_N$, and
					\begin{equation*}
					G_N^2(x,y) = \sum_{z \in A_N} G_N(x,z) \, G_N(z,y),
					\end{equation*}
					Note that here the meaning of $G_N$ is not the same as $G_n$ in Section~\ref{sec;Green} with $n=N$ as $A_N\neq C_N$. 
					\begin{lemma}
					\label{feb3.lemma1}There exists $c_0 \in \mathbb{R}$ such that
					if $|x|,|y|, |x-y| \leq N/2,$ then 
					\begin{equation*}
					G_N^2(x,y) = \frac{8}{\pi^2} \, \log\left[\frac N {|x-y|}\right] + c_0 +
					O\left(\frac{|x| + |y| + 1}{N} + \frac{1}{ |x-y|}\right) .
					\end{equation*}
					\end{lemma}	
					\begin{proof}
					Let $\delta = N^{-1} [1+|x| + |y|]$ and
					note that $|x-y| \leq \delta \, N$.  Since
					\begin{eqnarray*}
					\sum_{|w| < N(1-\delta)} G_N(x-y,w) \, G_N(0,w)
					& \leq  & 
					\sum_{w \in A_N} G_N(x,w) \, G_N(y,w)\\
					& 
					\leq & \sum_{|w| \leq N(1 + \delta)} G_N(x-y,w) \, G_N(0,w), 
					\end{eqnarray*}
					Lemma~\ref{feb3.lemma1} is reduced to the case $y=0$, which  then follows from Lemma~\ref{lem:Green2} and~\ref{lem:Green}.
					\end{proof}
					\begin{lemma}
					\label{lem:2.4} \label{important} There exists a sequence $r_{n}$ with $%
					r_{n}\leq O(\log \log n)$, a sequence $\epsilon _{n}\downarrow 0$, such that
					if $x,y\in L_{n}$ with $|x-y|\geq 1/\sqrt{n}\,$, 
					\begin{equation*}
					\left\vert a_{n}^{2}\,q_{N}(nx,ny)-r_{n}+\log {|x-y|}\right\vert \leq
					\epsilon _{n}.
					\end{equation*}
					\end{lemma}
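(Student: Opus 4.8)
\textbf{Proof plan for Lemma~\ref{important}.}

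The plan is to combine the exact two-point estimate for the spanning-forest connection probability (to be proved as Proposition~\ref{prop:2pt} via the first-passage decomposition of Lemma~\ref{important}'s companion result) with the Green's function asymptotics of Lemma~\ref{feb3.lemma1}. By Wilson's algorithm and \eqref{mar21.1}, $q_N(nx,ny)$ equals the non-intersection probability $\P\{LE(\omega^{nx})\cap\omega^{ny}=\emptyset\}$ for two independent SRWs from $nx,ny$ stopped on $\partial A_N$. The first-passage path decomposition (the same one yielding the universal constant $1/3$ and the lattice constant $\pi^2/8$ in \eqref{eq:exact}) shows that this probability is asymptotically the product of a Green's-function factor and the renormalized non-intersection constant; concretely, Proposition~\ref{prop:2pt} should give
\begin{equation*}
q_N(nx,ny)=\frac{\pi^2}{8}\,\tilde G_N^2(nx,ny)^{-1}\,\bigl[1+o(1)\bigr]+\text{(lower order)},
\end{equation*}
uniformly over $|x-y|\ge 1/\sqrt n$, where the $o(1)$ is controlled by the LERW intersection asymptotics of Section~\ref{lerwsection}. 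The point is that the quantity $b_n=\E[u(\eta)\tilde u(\eta)^2]$ built from the infinite LERW, together with Lemma~\ref{lem:bn}, is exactly what converts the local structure near one endpoint into the constant $\pi^2/(8\log n)$, and $a_n^2=\pi^2/(8b_n)$ is chosen precisely so that $a_n^2\,q_N(nx,ny)$ has a clean limit.

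The next step is to plug in Lemma~\ref{feb3.lemma1}. Writing $N=n(\log n)^{1/4}$ and $nx,ny\in A_{nK}$ with $|nx-ny|\ge \sqrt n$, that lemma gives
\begin{equation*}
\tilde G_N^2(nx,ny)=\frac{8}{\pi^2}\log\!\Bigl[\frac{N}{|nx-ny|}\Bigr]+c_0+O\!\Bigl(\frac{\log\log n}{\sqrt n}\Bigr)=\frac{8}{\pi^2}\Bigl(\log N-\log n-\log|x-y|\Bigr)+c_0+o(1).
\end{equation*}
(Here I use $\tilde G_N^2$ rather than $G_N^2$; since $|nx|,|ny|=O(n)\ll N$ the difference between $G^2$, $G_N^2$ and $\tilde G_N^2$ on this range is $O(1/\log n)$ after multiplication by $a_n^2$ — this needs a brief check using the Proposition on $G^2-G_N^2$ and a boundary-hitting estimate, but is routine.) Since $\log N-\log n=\tfrac14\log\log n=o(\log n)$, we get $\tilde G_N^2(nx,ny)=\tfrac{8}{\pi^2}\log N\bigl[1+o(1)\bigr]$ uniformly, and more precisely
\begin{equation*}
a_n^2\,q_N(nx,ny)=\frac{\pi^2/8}{b_n}\cdot\frac{\pi^2/8}{\tilde G_N^2(nx,ny)}\bigl[1+o(1)\bigr]=\frac{\log n}{\log N-\log n-\log|x-y|+\tfrac{\pi^2 c_0}{8}}\bigl[1+o(1)\bigr].
\end{equation*}
Expanding this ratio and absorbing the constant and the $\log N-\log n=\tfrac14\log\log n$ term into a single sequence $r_n$, one finds $a_n^2\,q_N(nx,ny)=r_n-\log|x-y|+o(1)$ with $r_n=\tfrac14\log\log n+O(1)=O(\log\log n)$, which is the claim; the error $\epsilon_n\downarrow 0$ comes from assembling the $o(1)$ in the LERW asymptotics, the $O(1/|x-y|)$ and $O((|x|+|y|+1)/N)$ terms in Lemma~\ref{feb3.lemma1} (both $o(1)$ on the range $|x-y|\ge 1/\sqrt n$, $|x|,|y|\le K$), and the $G^2$-versus-$G_N^2$ correction.

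The main obstacle is the uniformity in the two-point estimate Proposition~\ref{prop:2pt} across the whole regime $1/\sqrt n\le|x-y|$, down to microscopic separations where the two walks start within $O(\sqrt n)$ lattice steps of each other: there the "product" form (Green's function)$\times$(escape constant) must be justified even though the two loop-erased/simple walk paths interact strongly near their common starting region, and one must show the interaction contributes only a multiplicative $1+o(1)$. This is where the slowly-recurrent-set machinery of Section~\ref{nicesec} and the almost-sure convergence of $X_n$ (Proposition~\ref{prop:as}) enter: they let one replace the finite-$N$ escape probability near each endpoint by the limiting renormalized quantity $X_\infty$ with errors that are small in $L^p$, hence small in expectation after the decomposition. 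Handling the case $|x-y|$ of order $n$ or larger (comparable to $K n$, the support scale of $h$) is easier since then the two endpoints are genuinely well-separated. Once Proposition~\ref{prop:2pt} is in hand with the stated uniformity, the rest is the bookkeeping sketched above.
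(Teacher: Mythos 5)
You have the right strategy and the same route the paper intends: Wilson's algorithm to express $q_N(nx,ny)$ as a hitting probability of a LERW by an independent SRW, Proposition~\ref{prop:2pt} (via the first-passage decomposition) to factor that hitting probability, Lemma~\ref{feb3.lemma1} for the Green's function, and the definition of $a_n$ to make the escape factor cancel. Your flag about uniformity of Proposition~\ref{prop:2pt} down to $|n(x-y)|\asymp\sqrt n$ is a legitimate concern that the paper itself treats tersely.

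However, the central algebra is inverted and the final step is a non sequitur. Proposition~\ref{prop:2pt} gives $\P\{W[0,\sigma^W_n]\cap\eta\neq\emptyset\}=G_n^2(w)\,\hat p_n\,[1+O(\log^\alpha n/n)]$: the hitting probability is \emph{proportional} to the squared Green's function, not inversely proportional. Since $q_N(nx,ny)$ is the probability of being in the \emph{same} component, i.e.\ the probability that the Wilson walk from $ny$ \emph{does hit} $LE(\omega^{nx})$ (the ``$=\emptyset$'' in \eqref{mar21.1} is a typo, as the upper bound $q_N\leq c\log[N/|x-y|]/\log N$ right after it makes plain), the correct relation is $q_N(nx,ny)=G_N^2(nx,ny)\,\hat p\,[1+o(1)]$, not your $q_N\approx(\pi^2/8)/\tilde G_N^2$. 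Your intermediate formula
\[
a_n^2\,q_N(nx,ny)=\frac{\log n}{\log N-\log n-\log|x-y|+\tfrac{\pi^2c_0}{8}}\bigl[1+o(1)\bigr]
\]
cannot be massaged into $r_n-\log|x-y|+o(1)$: at $|x-y|=1$ its left side is $\asymp\log n/\log\log n$, while the claimed right side would be $O(\log\log n)$; so ``expanding this ratio \dots one finds $a_n^2q_N=r_n-\log|x-y|+o(1)$'' simply does not follow from what precedes it. A sanity check against the target shape (a difference $r_n-\log|x-y|$, which for $|x-y|\asymp 1/\sqrt n$ is $\asymp\tfrac12\log n$) would have caught the misplaced Green's function.

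The corrected computation is shorter and shows why $a_n$ is defined as it is: $b_n=\E[u(\eta)\tilde u(\eta)^2]$ equals $\hat p_{\lfloor\log N\rfloor}=\E[Z_m^3G_m^{-2}]$ up to fast-decaying corrections (unwind the definitions: $u\leftrightarrow Z$, $\tilde u\leftrightarrow G^{-1}Z$), so $a_n^2\hat p=(\pi^2/8)(\hat p/b_n)\to\pi^2/8$, and therefore
\[
a_n^2\,q_N(nx,ny)=\frac{\pi^2}{8}\,G_N^2(nx,ny)+o(1)=\tfrac14\log\log n-\log|x-y|+\tfrac{\pi^2c_0}{8}+o(1),
\]
using Lemma~\ref{feb3.lemma1} with $|nx-ny|\geq\sqrt n$ and $|nx|,|ny|\leq Kn\ll N$. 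That is the claim with $r_n=\tfrac14\log\log n+\tfrac{\pi^2c_0}{8}+o(1)$. One further caution: you say the difference among $G^2$, $G_N^2$, $\tilde G_N^2$ is ``$O(1/\log n)$ after multiplication by $a_n^2$''; in $d=4$, $G^2\equiv\infty$ and $\tilde G_N^2-G_N^2=O(1)$, which after multiplying by $a_n^2\asymp\log n$ is $O(\log n)$, not small. This is harmless only because both Proposition~\ref{prop:2pt} and Lemma~\ref{feb3.lemma1} are already stated for $G_N^2$, so no switch is needed — but the claimed magnitude was off by a factor $\log^2 n$.
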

					\begin{proof} 
					In light of Wilson's algorithm,  Lemma~\ref{lem:Green} and Proposition~\ref{prop:2pt}  yield Lemma \ref{important} in the $y=0$ case.
					The general case can be reduced to the $y=0$ case as in Lemma~\ref{feb3.lemma1} by recentering.
					\end{proof}	
				}

				An upper bound for $q_{N}(x,y)$ can be given in terms of the probability
				that the paths of two independent random walks starting at $x,y$, stopped
				when they leave $A_N$, intersect. This gives 
				\begin{equation*}
				q_{N}(x,y)\leq \frac{c\,\log [N/|x-y|]}{\log N}.
				\end{equation*}%
				Let $\delta_n = \exp\{- (\log \log n)^2\}$, which is a
				function that decays faster than any power of $\log n$.  Then 
				\begin{equation}
				q_{N}(x,y)\leq c\,\frac{(\log \log n)^{2}}{\log n},\;\;\;\;\;|x-y|\geq
				n\,\delta _{n}.  \label{feb3.1}
				\end{equation}

				%
				

				It follows from Lemma~\ref{lem:2.4}, \eqref{feb3.1}, and the trivial
				inequality $q_N \leq 1$, that 
				\begin{eqnarray*}
				\mathbb{E}\left[ \langle h, \phi_n \rangle^2 \right] & = & o(1) + n^{-4} \,
				\sum_{x,y \in L_n} \left[r_n - \log|x-y|\right] \, h(x) \, h(y) \\
				& = & o(1) - n^{-4} \,\sum_{x,y \in L_n} \log {|x-y|} \, h(x) \, h(y) \\
				& = & o(1) - \int h(x) \, h(y) \, \log |x-y|\, dx\, dy,
				\end{eqnarray*}
				which shows that the second moment has the correct limit. The second
				equality uses $\int h = 0$ to conclude that 
				\begin{equation*}
				\frac{r_n}{n^4} \, \sum_ {x,y \in L_n} h(x) \, h(y) = o(1).
				\end{equation*}
				
				We now consider the higher moments. It is immediate from the construction
				that the odd moments of $\langle h,\phi _{n}\rangle $ are identically zero,
				so it suffices to consider the even moments $\mathbb{E}[\langle h,\phi
				_{n}\rangle ^{2k}]$. We fix $k\geq 1$ and
				allow implicit constants to depend on $k$ as well. Let $L_{\ast }=L_{n,\ast
				}^{2k}$ be the set of $\bar{x}=(x_{1},\ldots ,x_{2k})\in L_{n}^{2k}$ such
				that $|x_{j}|\leq K$ for all $j$ and $|x_{i}-x_{j}|\geq \delta _{n}$ for
				each $i\neq j$. We write $h(\bar{x})=h(x_{1})\,\dots \,h(x_{2k}).$
				
				Note that $\#L_\ast\asymp n^{8k} $ and $\#(L_n^{2k} \setminus L_{*}) \asymp
				k^2 \, n^{8k}\, \delta_n .$ In particular, 
				\begin{equation*}
				n^{-8k} \, a_n^{2k} \sum_{\bar x\in L_{n,*}^{2k}} h(x_1) \, h(x_2) \, \cdots
				\, h(x_{2k}) \, = o_{2k} (\sqrt {\delta_n}).
				\end{equation*}
				
				Then we see that 
				\begin{eqnarray*}
				\mathbb{E}\left[ \langle h,\phi _{n}\rangle ^{2k}\right] &=&n^{-8k}%
				\,a_{n}^{2k}\sum_{\bar{x}\in L_{n}^{2k}}h(\bar{x})\,\mathbb{E}\left[
				Y_{nx_{1}}\cdots Y_{nx_{2k}}\right] \\
				&=&O(\sqrt{\delta _{n}})+n^{-8k}\,a_{n}^{2k}\sum_{\bar{x}\in L_{\ast }}h(%
				\bar{x})\,\mathbb{E}\left[ Y_{nx_{1}}\cdots Y_{nx_{2k}}\right].
				\end{eqnarray*}
				
				\begin{lemma}
				\label{lem:2.5} \label{jul24.lemma1} For each $k$, there exists $c<\infty $
				such that the following holds. Suppose $\bar{x}\in L_{n,\ast }^{2k}$ and let 
				$\omega ^{1},\ldots ,\omega ^{2k}$ be independent simple random walks
				started at $nx_{1},\ldots ,nx_{2k}$ stopped when they reach $\partial A_N$%
				. Let $N$ denote the number of integers $j\in \{2,3,\ldots ,2k\}$ such that 
				\begin{equation*}
				\omega ^{j}\cap (\omega ^{1}\cup \cdots \cup \omega ^{j-1})\neq \emptyset .
				\end{equation*}%
				Then, 
				\begin{equation*}
				\P \{N\geq k+1\}\leq c\,\left[ \frac{(\log \log n)^{3}}{\log n}\right]
				^{k+1}.
				\end{equation*}
				\end{lemma}
				
				Conditioned on Lemma~\ref{lem:2.5}, we now prove Theorem~\ref{BGF} by
				verifying Wick's formula. We write $y_{j}=nx_{j}$ and write $Y_{j}$ for $%
				Y_{y_{j}}$. To calculate $\mathbb{E}[Y_{1}\cdots Y_{2k}]$ we first sample
				our USF which gives a random partition ${\mathcal{P}}$ of $\{y_{1},\ldots
				,y_{2k}\}$. Note that $\mathbb{E}[Y_{1}\cdots Y_{2k}\mid {\mathcal{P}}]$
				equals $1$ if it is an \textquotedblleft even\textquotedblright\ partition
				in the sense that each set has an even number of elements. Otherwise, $%
				\mathbb{E}[Y_{1}\cdots Y_{2k}\mid {\mathcal{P}}]=0$. Any even partition,
				other than a partition into $k$ sets of cardinality $2$, will have $N\geq
				k+1 $. Hence 
				\begin{equation*}
				\mathbb{E}[Y_{1}\cdots Y_{2k}]=O\left( \left[ \frac{(\log \log n)^{3}}{\log n%
				}\right] ^{k+1}\right) +\sum \P \left( {\mathcal{P}}_{\bar{y}}\right) ,
				\end{equation*}%
				where the sum is over the $(2k-1)!!$ perfect matchings of $\{1,2,\ldots
				,2k\} $ and $\P \left( {\mathcal{P}}_{\bar{y}}\right) $ denotes the
				probability of getting this matching for the USF for the vertices $%
				y_{1},\ldots ,y_{2k}.$
				
				Let us consider one of these perfect matchings that for convenience we will
				assume is $y_1 \leftrightarrow y_2, y_3 \leftrightarrow y_4, \ldots,y_{2k-1}
				\leftrightarrow y_{2k}$. We claim that 
				\begin{equation*}
				\P (y_1 \leftrightarrow y_2, y_3 \leftrightarrow y_4, \ldots,y_{2k-1}
				\leftrightarrow y_{2k}) = \hspace{2in}
				\end{equation*}
				\begin{equation*}
				O\left(\left[\frac{(\log \log n)^3}{\log n} \right]^{k+1}\right) + \P (y_1
				\leftrightarrow y_2) \, \P (y_3 \leftrightarrow y_4) \cdots \P (y_{2k-1}
				\leftrightarrow y_{2k}).
				\end{equation*}
				Indeed, this is just inclusion-exclusion using our estimate on $\P \{N \geq
				k+1\}$.
				
				If we write $\epsilon_{n} = \epsilon_{n,k} = (\log \log n)^{3(k+1)}/\log n$,
				we now see from symmetry that 
				\begin{eqnarray*}
				\lefteqn{\E\left [ \langle h, \phi_n \rangle^{2k} \rangle\right]} \\
				& =& O(\epsilon_n) + n^{-8k} a_n \, (2k-1)!! \, \sum_{\bar x \in L_*} \P %
				\{nx_1 \leftrightarrow nx_2, \ldots, nx_{2k-1} \leftrightarrow nx_{2k} \} \\
				& = & O(\epsilon_n) + (2k-1)!! \left[\mathbb{E}\left(\langle h,\phi_n
				\rangle^2\right) \right]^k.
				\end{eqnarray*}
				
				\subsection{Proof of Lemma \protect\ref{jul24.lemma1}}
				
				Here we fix $k$ and let $y_{1},\ldots ,y_{2k}$ be points with $|y_{j}|\leq
				Kn $ and $|y_{i}-y_{j}|\geq n\,\delta _{n}$ where we recall $\log \delta
				_{n}=-(\log \log n)^{2}$. Let $\omega ^{1},\ldots ,\omega ^{2k}$ be
				independent simple random walks starting at $y_{j}$ stopped when they get to 
				$\partial A_N$. We let $E_{i,j}$ denote the event that $\omega ^{i}\cap
				\omega ^{j}\neq \emptyset $, and let $R_{i,j}=\P (E_{i,j}\mid \omega ^{j})$.
				
				\begin{lemma}
				There exists $c < \infty$ such that for all $i,j$, and all $n$ sufficiently
				large, 
				\begin{equation*}
				\P \left\{R_{i,j} \geq c \, \frac{(\log \log n)^3} {\log n} \right\} \leq 
				\frac{1}{(\log n)^{4k}} .
				\end{equation*}
				\end{lemma}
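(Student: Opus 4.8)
The plan is to bound the random variable $R_{i,j}=\P(E_{i,j}\mid\omega^j)$, the conditional probability that an independent random walk $\omega^i$ from $y_i$ hits the path $\omega^j$, by controlling the Green's-function "capacity-type" sum $\sum_{z\in\omega^j} G_N(y_i,z)$. Indeed, by a standard first-hitting decomposition, $R_{i,j}\le \sum_{z\in\omega^j}G_N(y_i,z)\,/\,G_N(y_i,y_i)^{-1}$-type bound; more precisely, writing $\tau$ for the first time $\omega^i$ hits the (fixed) set $K_j:=\omega^j[0,T^j]$, one has $R_{i,j}=\P_{y_i}\{\tau<T^i\}$, and by summing over the hitting location and using that $G_N(z,z)$ is bounded below, $R_{i,j}\le C\sum_{z\in K_j}G_N(y_i,z)$ — actually one should be slightly more careful and use the last-exit or the Green's function comparison $\P_{y_i}\{\text{hit }K_j\}\asymp \mathrm{Es}_{K_j}\cdot(\text{something})$, but the crude upper bound $R_{i,j}\le C\sum_{z\in K_j}G_N(y_i,z)$ suffices for an upper-tail estimate. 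So the whole problem reduces to a tail bound on $\Psi:=\sum_{z\in\omega^j}G_N(y_i,z)$, where the randomness is only in the SRW path $\omega^j$.

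\textbf{Key steps.} First I would compute $\E[\Psi]=\sum_{z}\E[\#\{\text{visits of }\omega^j\text{ to }z\}]\,G_N(y_i,z)=\sum_z G_N(y_j,z)\,G_N(y_i,z)=G_N^2(y_i,y_j)$. By Lemma~\ref{feb3.lemma1}, since $|y_i|,|y_j|\le Kn$ and $|y_i-y_j|\ge n\delta_n$ (so $\log[N/|y_i-y_j|]\le \log[(\log n)^{1/4}/\delta_n]+O(1)=O((\log\log n)^2)$), we get
\begin{equation*}
\E[\Psi]=G_N^2(y_i,y_j)=\frac{8}{\pi^2}\log\!\Big[\frac{N}{|y_i-y_j|}\Big]+O(1)=O\big((\log\log n)^2\big).
\end{equation*}
So Markov already gives $\P\{\Psi\ge \lambda\}\le O((\log\log n)^2)/\lambda$, which at $\lambda=c(\log\log n)^3/\log n$ is only $O(\log n/\log\log n)$ — useless. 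The second step, which does the real work, is to upgrade to an exponential moment bound: I want to show $\E[e^{\beta\Psi/\log n}]\le C$ for a suitable small constant $\beta>0$ (depending on $k$), or at least control high moments $\E[\Psi^m]$. The standard tool here is the fact that the Green's-function sum $\Psi$ for a random walk path has sub-exponential (indeed, after the right normalization, exponential-type) concentration: one writes $\E[\Psi^m]=\sum_{z_1,\dots,z_m}G_N(y_i,z_1)\cdots$ and uses the Markov property to reorganize the sum over ordered hitting times, obtaining a bound of the form $\E[\Psi^m]\le m!\,\big(\sup_w \sum_z G_N(w,z)G_N(z,w')\cdots\big)$-style product, giving $\E[\Psi^m]\le m!\,C^m\,(\log\log n)^{2m}$ after using $\sup_{w}\sum_z G_N(w,z)\,\mathbf 1\{\dots\}$ bounds — essentially $\Psi$ is stochastically dominated by a constant times $(\log\log n)^2$ times an $\mathrm{Exp}(1)$-type variable. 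Concretely I would prove $\E[\Psi^m]\le m!\,(C(\log\log n)^2)^m$ by iterating: $\E[\Psi^m\mid\text{path up to }m\text{-th point}]$ reduces via the strong Markov property to $\sup_w\,G_N^2(w,\cdot)$-sums bounded by $O((\log\log n)^2)$ uniformly (here using $|y_i|\le Kn$ and the Green's function estimate, with the worst case when the fresh point is near $y_i$, but even then the sum $\sum_z G_N(w,z)G_N(y_i,z)=G_N^2(w,y_i)=O(\log N)=O(\log\log n)$ — wait, that's only $O(\log\log n)$; more carefully the relevant uniform bound is $\sup_{w\in A_N}G_N^2(w,y_i)\le \frac{8}{\pi^2}\log N+O(1)$, giving each factor $O(\log N)=O(\log\log n)$, so actually $\E[\Psi^m]\le m!\,(C\log\log n)^m$, even better). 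Then Markov's inequality applied to $\Psi^m$ with $m\asymp \log\log n$ — or a Chernoff bound via the exponential moment — gives
\begin{equation*}
\P\Big\{\Psi\ge c\,\frac{(\log\log n)^3}{\log n}\Big\}\le \Big(\frac{C\log\log n}{c(\log\log n)^3/\log n}\Big)^{m}m!^{-1}\cdot m!=\Big(\frac{C\log n}{c(\log\log n)^2}\Big)^{-m},
\end{equation*}
and choosing $m=\lceil 5k\rceil$ constant (not growing!) already makes the right side $\le (\log n)^{-4k}$ for $n$ large, since $C\log n/(c(\log\log n)^2)\to\infty$; one just needs $c$ large enough relative to $C,k$.

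\textbf{Main obstacle.} The delicate point is getting the \emph{uniform} moment bound $\E[\Psi^m]\le m!\,(C\log\log n)^m$ — or even just $\E[\Psi^m]\le (C\log n\,/\,\log\log n\cdot\text{junk})^m$ — with constants not depending on the positions $y_i,y_j$ beyond the stated constraints $|y_j|\le Kn$, $|y_i-y_j|\ge n\delta_n$. The subtlety is that when $\omega^j$ wanders close to $y_i$, individual Green's-function values $G_N(y_i,z)$ can be as large as $O(1)$ (not small), so the bound must come from the \emph{small volume} near $y_i$ that the walk occupies, quantified precisely by the iterated Green's-function sums $\sup_w G_N^2(w,y_i)=O(\log N)=O(\log\log n)$ from Lemma~\ref{feb3.lemma1} (with the $N=n(\log n)^{1/4}$ cutoff crucially keeping $\log N=O(\log n)$, hence $\log\log N = O(\log\log n)$ — and it is really the $\log N\asymp \log n$ bound entering, so I should double-check whether the exponent works out to give each factor $O(\log\log n)$ or $O(\log n)$; if it is $O(\log n)$ per factor then I need $m\to\infty$, taking $m=\lceil 5k\log n/\log\log n\rceil$ is not allowed since $m!$ then blows up — so I must extract the sharper $O(\log\log n)$ using that after the first hitting point the remaining path has length $\le N^2$ and lives in $A_N$, making the conditional sums telescope against $\log[N/|\cdot|]$ with the relevant scale being $n\delta_n$, yielding $\log[N/(n\delta_n)]=O((\log\log n)^2)$ per factor, still fine for constant $m$). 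I would resolve this by being careful that the first factor is $\E[\Psi]=G_N^2(y_i,y_j)=O((\log\log n)^2)$ and each subsequent conditional factor is $\sup_{w\in A_N}\sum_z G_N(w,z)G_N(w',z)$ over the relevant range, also $O((\log\log n)^2)$ by the same lemma applied at the scale $|w-w'|\ge 1$, giving $\E[\Psi^m]\le m!\,(C(\log\log n)^2)^m$ and then the Chernoff/Markov computation above with constant $m=\lceil 5k\rceil$ closes the estimate.
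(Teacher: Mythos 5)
The approach has a fatal quantitative flaw at the very first step, before any moment estimates enter. You bound $R_{i,j}$ by $\Psi := \sum_{z\in\omega^j} G_N(y_i,z)$ and then try to estimate the upper tail of $\Psi$. But in four dimensions the Green's-function sum over a random-walk path is larger than the hitting probability by a factor of order $\log N \asymp \log n$: this is exactly the ``slowly recurrent'' phenomenon. Concretely, by the very estimate you quote,
\[
\E[\Psi] \asymp G_N^2(y_i,y_j) = \frac{8}{\pi^2}\log\!\Big[\frac{N}{|y_i-y_j|}\Big] + O(1) \asymp (\log\log n)^2,
\]
whereas
\[
\E[R_{i,j}] = \P\{\omega^i\cap\omega^j\neq\eset\} \asymp \frac{(\log\log n)^2}{\log n}.
\]
So the threshold $c(\log\log n)^3/\log n$ at which you want to bound the upper tail is \emph{far below} the typical size of $\Psi$: for large $n$, $\P\{\Psi \geq c(\log\log n)^3/\log n\}$ is close to $1$, not close to $0$. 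No choice of $m$, no exponential moment, and no concentration inequality can give a small tail bound for $\Psi$ at a level well below its mean — the proposed inequality $\P\{\Psi\geq\lambda\}\le\E[\Psi^m]/\lambda^m$ simply produces a number greater than one. (The displayed computation contains a sign error in the exponent, $(C\log n/(c(\log\log n)^2))^{-m}$ in place of $(\cdots)^{+m}$, which hides that the bound diverges; and the repeated assertion $\log N = O(\log\log n)$ is also false, $\log N\asymp\log n$.)

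To salvage the idea you would have to work with a quantity whose typical size actually matches the target $(\log\log n)^3/\log n$, i.e.\ with $R_{i,j}$ itself rather than the Green's-function surrogate. The paper does exactly this, via a restart argument: run $\omega^j$ until it either leaves $A_N$, enters the ball $B(y_i, n\delta_n^2)$, or has accumulated hitting probability from $y_i$ exceeding $c_1(\log\log n)^2/\log n$; in the third case restart from the current site. Because the starting bound gives $\P\{R_{i,j}\le c_1(\log\log n)^2/\log n\}\geq 1/2$ for each trial (by Markov), the number $r$ of restarts has an exponential tail $2^{-r}$, and on the event of no close approach, $R_{i,j}\le r\cdot c_1(\log\log n)^2/\log n$ by a union bound over the $r$ segments. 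Taking $r\asymp\log\log n$ gives the threshold $c(\log\log n)^3/\log n$ with tail $(\log n)^{-4k}$, and the close-approach event contributes only $O(\delta_n)=\exp\{-(\log\log n)^2\}$, which is negligible. This mechanism — boosting a constant-probability Markov bound on the hitting probability itself by a sequential restart — is what produces the extra $\log\log n$ in the threshold, and it has no analogue in a moment bound for $\Psi$.
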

				
				\begin{proof}  We know that there exists $c < \infty$
				such that  if $|y-z| \geq
				n \, \delta_n^2$, then the probability that
				simple random walks starting at $y,z$ stopped
				when they reach $\partial A_N$ intersect is at most
				$c(\log \log n)^2/\log n$.
				Hence there exists $c_1$ such that  
				\begin{equation}  \label{jul24.2}
				\P\left\{R_{i,j} \leq \frac{c_1 \, (\log \log n)^2}{\log n} \right\}
				\geq \frac 12 . 
				\end{equation}
				Start a random walk at $z$ and run it until
				one of three things happens:
				\begin{itemize}
				\item  It reaches $\partial A_N$ 
				\item  It gets within distance $n \, \delta_n^2$ of $y$
				\item  The path is such that the probability that a
				simple random walk starting at $y$ intersects the path
				before reaching $\partial A_N$ is greater than $c_1
				(\log \log n)^2/\log n$.
				\end{itemize}
				If the third option occurs, then we restart the walk
				at the current site and do this operation again.  Eventually
				one of the first two options will occur.   Suppose
				it takes $r$ trials of this process until one of the
				first two events occur.  Then either $R_{i,j}
				\leq r \, c_1\,(\log \log n)^2/\log n$ or the original path starting at $z$
				gets within distance $\delta_n^2$ of $y$.  The latter
				event occurs with probability $O(\delta_n) =
				o((\log n)^{-4k})$.  Also, using \eqref{jul24.2}, we can
				see the probability that it took at least $r$ steps is
				bounded by $(1/2)^{r}$.   By choosing $r = c_2 \log 
				\log n$, we can make this probability less than
				$1/(\log n)^{4k}$. 
				\end{proof}
				
				\begin{proof}[Proof of Lemma \ref{jul24.lemma1}]
				Let $R$ be the maximum of $R_{i,j}$ over all $i\neq j$
				in $\{1,\ldots,2k\}$.  Then, at least for $n$ sufficiently
				large, 
				\[         \P\left\{ R  \geq c \, \frac{(\log \log n)^3}
				{\log n} \right\} \leq \frac{1}{(\log n)^{3k}} . \]
				Let
				\[       E_{\cdot,j} =\bigcup_{i=1}^{j-1} E_{i,j} .\]
				On the event  $R < c (\log \log n)^3/\log n$, we have
				\[   \P\left\{E_{\cdot,j} \mid \omega^1,\ldots,\omega^{j-1}
				\right\}  \leq \frac{c(j-1) \, (\log \log n)^3}{\log n }
				.\]
				If $N$ denotes the number of $j$ for which $E_{\cdot,j}$
				occurs, we see that
				\[    \P \{N \geq k+1 \} \leq c \, \left[\frac{(\log \log n)^3}
				{\log n} \right]^{k+1} . \qedhere\] 
				\end{proof}

				\section*{Acknowledgements}
				We are grateful to Scott Sheffield
				for motivating this problem and helpful discussions. We thank Yuval Peres, 
				Robin Pemantle and an anonymous referee for useful comments on an earlier draft. We thank the
				Isaac Newton Institute for Mathematical Sciences, Cambridge, for support and
				hospitality during the programme \emph{Random Geometry} where part of this
				project was undertaken. 
				G. Lawler was supported by
				NSF grant DMS-1513036.
				X. Sun was supported by Simons Foundation as a Junior Fellow at Simons Society of Fellows  and by NSF grants DMS-1811092 and by Minerva fund at Department of Mathematics at Columbia University. 
				The research of W. Wu was supported in part by NSF grant DMS-1507019.
				

				\end{document}